\documentclass[11pt]{articlefederico}
\usepackage{amsmath,amssymb,amsfonts}
\usepackage{amsthm}
\usepackage{enumerate}
\usepackage{stmaryrd}
\usepackage{young}
\usepackage{xcolor}
\usepackage{graphicx}
\usepackage{epsfig}
\usepackage{epstopdf}
\usepackage{color}
\usepackage{float}
\usepackage[a4paper]{geometry}
\usepackage{ytableau}
\usepackage{mathtools}
\usepackage{hyperref}
\usepackage[none]{hyphenat}
\usepackage{lscape}
\usepackage{verbatim}
\usepackage{wasysym}
\usepackage{tikz}
\usetikzlibrary{arrows, matrix, fit}
\usepackage{mathtools}
\usepackage{colortbl}

\usepackage{tikz-cd} 

\usepackage{tikz}

\newcommand{\Pentagon}{
  \tikz[baseline=-0.5ex, scale=0.2]{
    \draw (90:1) -- (162:1) -- (234:1) -- (306:1) -- (18:1) -- cycle;
    \fill (0,0) circle (0.15);
  }
}

\newtheorem{theorem}{Theorem}[section]
\newtheorem{corollary}[theorem]{Corollary}

\newtheorem{lemma}[theorem]{Lemma}
\newtheorem{remark}[theorem]{Remark}

\newtheorem{proposition}[theorem]{Proposition}

\newtheorem{conjecture}[theorem]{Conjecture}
\newtheorem{definition}[theorem]{Definition}
\newtheorem{example}[theorem]{Example}

\renewcommand{\a}{\mathsf{a}}
\renewcommand{\b}{\mathsf{b}}

\newcommand{\A}{\mathsf{A}}
\newcommand{\B}{\mathsf{B}}
\newcommand{\ABHY}{\mathrm{ABHY}}
\newcommand{\AFV}{\mathrm{AFV}}

\newcommand{\1}{\mathbf{1}}

\newcommand{\C}{\mathcal{C}}
\newcommand{\F}{\mathcal{F}}

\newcommand{\N}{\mathcal{N}}

\renewcommand{\S}{\mathcal{S}}

\newcommand{\X}{\mathcal{X}}

\newcommand{\U}{\mathcal{U}}

\newcommand\Curves{\operatorname{Curves}}
\newcommand\Chords{\operatorname{Chords}}
\newcommand\Spokes{\operatorname{Spokes}}

\renewcommand{\c}{\mathsf{c}}

\newcommand\Brackets{\operatorname{Brackets}}
\newcommand\Cosmo{\operatorname{Cosmo}}
\newcommand\CosmoFan{\operatorname{CosmoFan}}
\newcommand\Loday{\operatorname{Loday}}
\newcommand\Assoc{\operatorname{Assoc}}
\newcommand\BrAssoc{\operatorname{BrAssoc}}
\newcommand\AssocFan{\operatorname{AssocFan}}
\newcommand\BrAssocFan{\operatorname{BrAssocFan}}
\newcommand\BdCorrela{\operatorname{BdCorrela}}
\newcommand\Cycle{\operatorname{Cycle}}
\newcommand\Br{\operatorname{Br}}
\newcommand\PreBr{\operatorname{PreBr}}
\newcommand\Mat{\operatorname{Mat}}
\newcommand\Tr{\operatorname{Tr}}
\newcommand\Tree{\operatorname{Tree}}
\newcommand\BrTree{\operatorname{BrTree}}
\newcommand\bracket{\operatorname{bracket}}
\newcommand\codim{\operatorname{codim}}
\newcommand\cone{\operatorname{cone}}
\newcommand\conv{\operatorname{conv}}
\newcommand\diag{\operatorname{diag}}
\newcommand\interior{\operatorname{int}}
\newcommand\tree{\operatorname{tree}}
\newcommand\Perm{\operatorname{Perm}}

\newcommand\Newt{\operatorname{Newt}}

\newcommand{\R}{\mathbb{R}}
\newcommand{\Z}{\mathbb{Z}}

\usepackage{enumitem}
\usepackage{xcolor}

\definecolor{darkgreen}{rgb}{0,.5,0}
\definecolor{brown}{rgb}{0.5,0.3,0}

\title{\textsf{Combinatorics of the cosmohedron }  \includegraphics[scale=0.03]{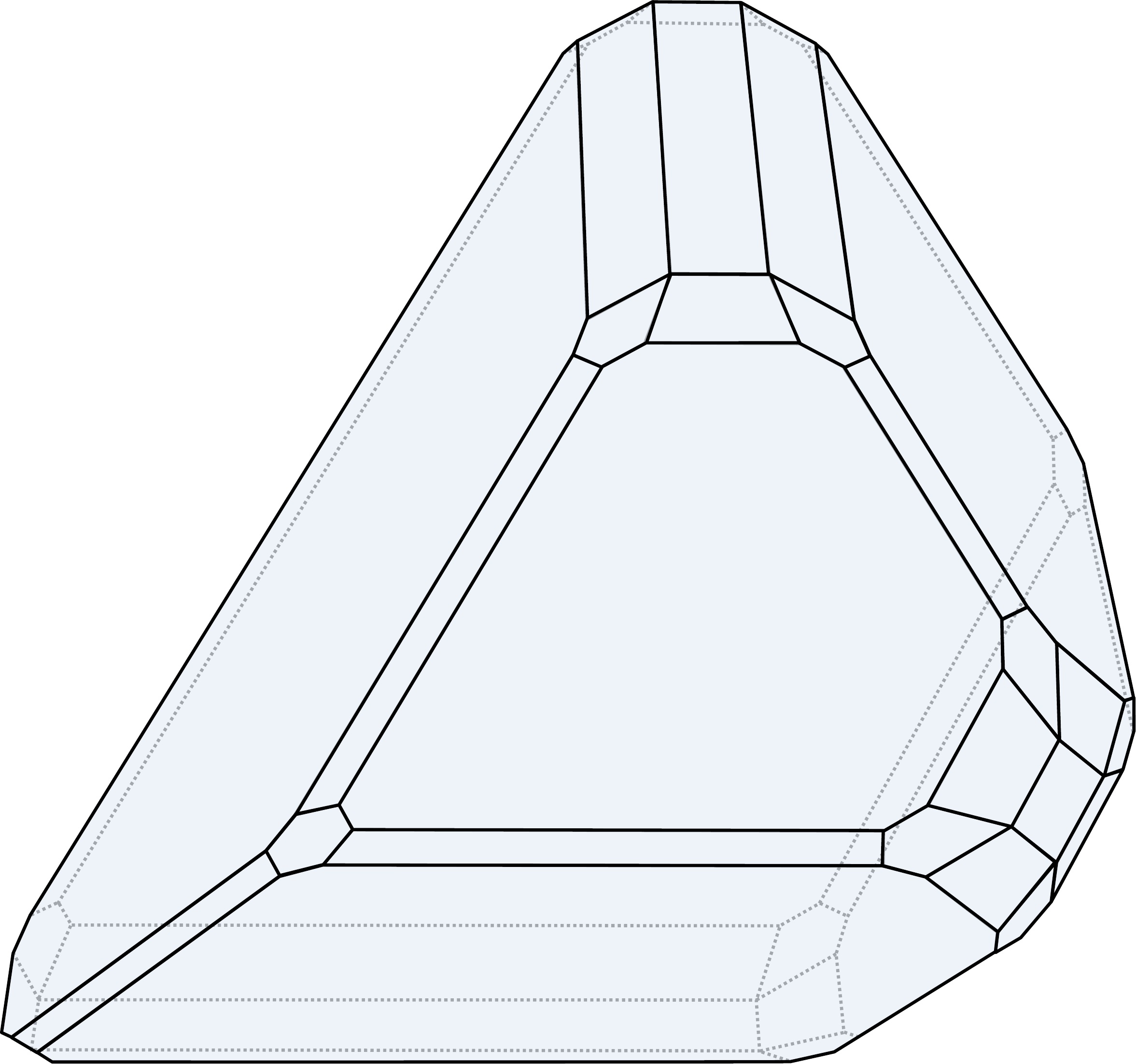}}

\date{}
\author{ 
\textsf{Federico Ardila-Mantilla}\footnote{\noindent \textsf{Queen Mary University of London and San Francisco State University; \texttt{f.ardila@qmul.ac.uk}.}}
\qquad \and \qquad
\textsf{Nima Arkani-Hamed}\footnote{\noindent \textsf{Institute for Advanced Study, Princeton; \texttt{arkani@ias.edu.}}}
\qquad \and \qquad
\textsf{Carolina Figueiredo}\footnote{\noindent \textsf{Jadwin Hall, Princeton University; \texttt{cfigueiredo@princeton.edu}. }}
\qquad \and \qquad
\textsf{Francisco Vazão}\footnote{\noindent \textsf{Institute for Advanced Study, Princeton; \texttt{fvvazao@ias.edu}.}}
\qquad \qquad }
\date{}
\begin{document}

\maketitle

\begin{abstract}
The cosmohedron was recently proposed as a polytope underlying the cosmological wavefunction for $\text{Tr}(\Phi^3)$ theory. Its faces were conjectured to be in bijection with Matryoshkas, which are obtained from a subdivision of a polygon by sequentially wrapping groups of polygons into larger polygons. In this paper, we prove the correctness of this construction and elucidate its combinatorial structure. Cosmohedra generalize to a wider class of $\mathcal{X}$ in $Y$ polytopes, where we chisel a polytope from the family $\mathcal{X}$ at each vertex of a polytope $Y$. 
We sketch a new application of these chiseled polytopes to the physics of ultraviolet divergences in loop-integrated Feynman amplitudes. 
\end{abstract}

\section{\textsf{Introduction}} \label{sec:intro}

It has been a long-standing question in cosmology to find a geometric object that encodes the cosmological wavefunction of  Tr$(\phi^3)$ theory. 
The combinatorics of this wavefunction is governed by certain nesting subdivisions of a polygon
that we call \emph{Matryoshkas}.

\begin{figure}[h]
\begin{center}
\includegraphics[width=0.24\linewidth]{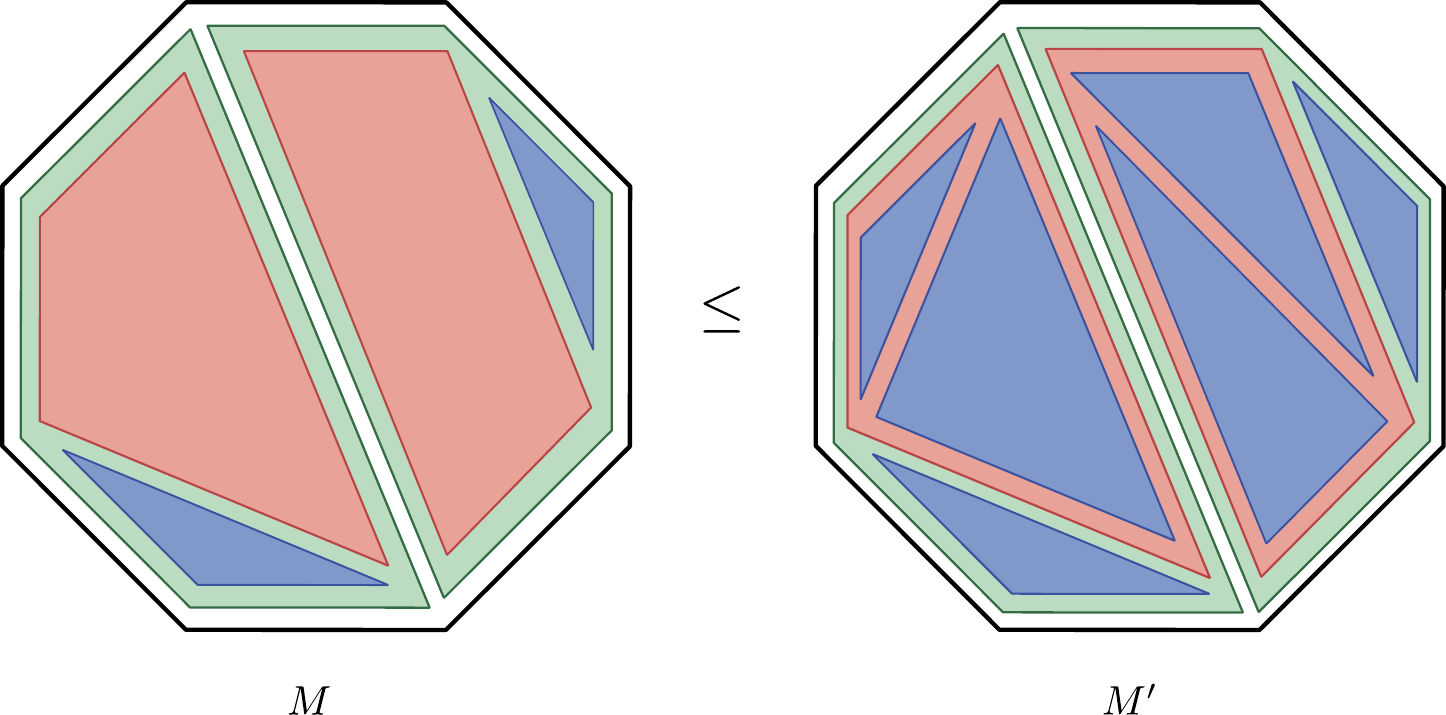}
\end{center}
\end{figure}
\vspace{-.5cm}
\noindent Arkani-Hamed, Figueiredo, and Vaz\~ao  \cite{AFV} proposed a construction of the \emph{cosmohedron} as a solution to this cosmological question. 
The goal of this article is to prove the correctness of that subtle construction. 
Along the way we recover and discover numerous physical, combinatorial, polyhedral, and enumerative properties of Matryoshkas and cosmohedra, that we briefly outline in this introduction.

\subsection{\textsf{Combinatorial properties of the cosmohedron}}

The combinatorial structure of the cosmological wavefunction of $\Tr(\phi^3)$ theory
is described by the \emph{Matryoshkas} of an $(n+2)$-gon $\pentagon_{n+2}$, so these are the objects we wish to model geometrically in the cosmohedron.
We call them Matryoshkas because their nesting structure is reminiscent of, though much richer than, that of the Matryoshka dolls created by S. Malyutin and V. Zvyozdochkin in Russia in the 1890s.

One way of constructing a Matryoshka, ``from the inside out," is to choose a subdivision of $\pentagon_{n+2}$, and then repeatedly choosing a group of adjacent outermost polygons and wrapping them into a larger polygon engulfing them. 
Alternatively, ``from the outside in", a Matryoshka is obtained recursively by choosing a subdivision of $\pentagon_{n+2}$, and then placing a Matryoshka inside each of its polygons.

\begin{figure}[h]
\begin{center}
\includegraphics[width=0.6\linewidth]{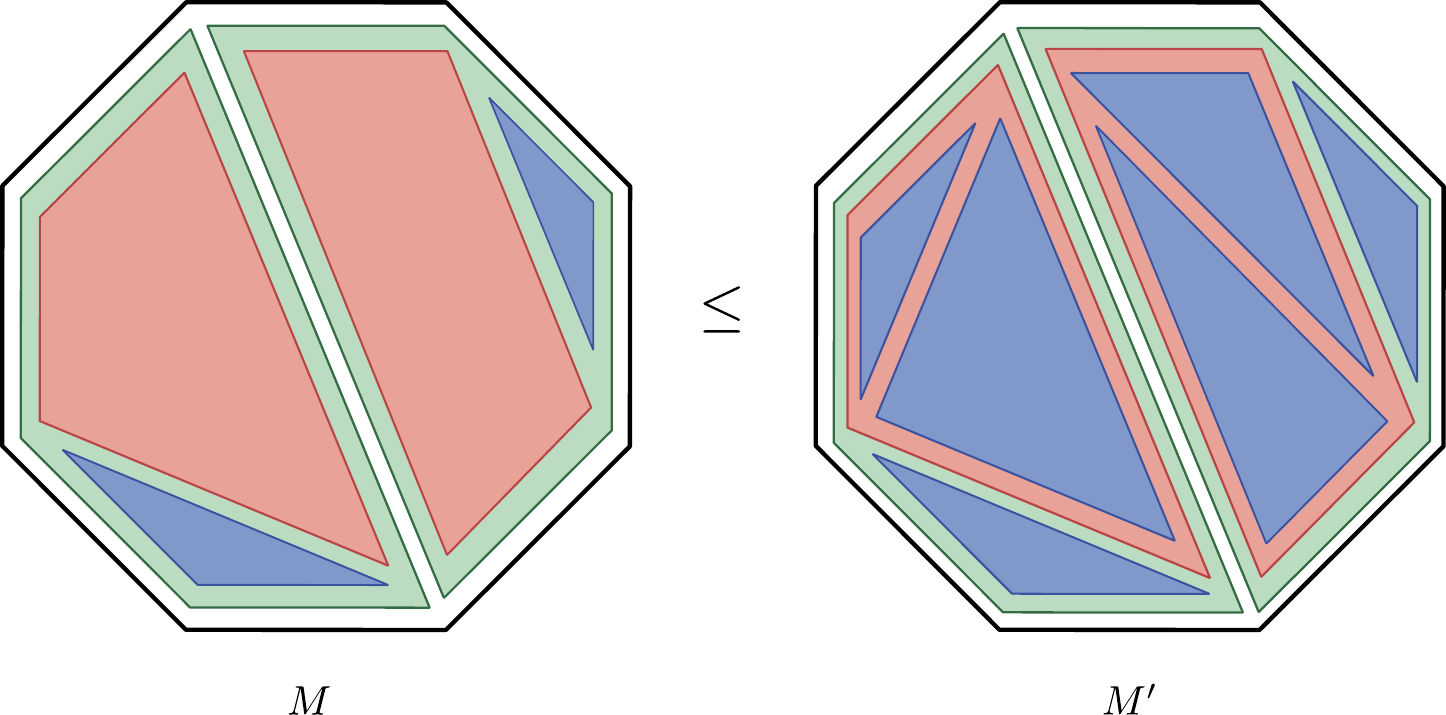}
\end{center}
\caption{An octagonal Matryoshka $M$ and a maximal Matryoshka $M'$ containing it.} \label{fig:doll8}
\end{figure}

Naturally, we can think of a Matryoshka as a set of polygons. This gives a   partial order on the set of Matryoshkas on $\pentagon_{n+2}$ by  \emph{containment}: we have $M \leq M'$ if every polygon of $M$ is also in $M'$.

Arkani-Hamed, Figueiredo, and Vaz\~ao introduced an $(n-1)$-dimensional polytope called the \emph{cosmohedron}
to encode this combinatorial structure; see Definition \ref{def:AFVcosmo}. Our main result is a proof of the following statement from \cite{AFV}.

\begin{theorem}  \label{thm:cosmofan1}
The face lattice of the $(n-1)$-cosmohedron is anti-isomorphic to the poset of  Matryoshkas of an $(n+2)$-gon.
\end{theorem}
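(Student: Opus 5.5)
We will prove the statement through the normal fan of the cosmohedron, by exhibiting a complete simplicial-type fan whose cones are indexed by Matryoshkas. Recall (Definition \ref{def:AFVcosmo}) that the AFV cosmohedron is built from the associahedron $\Assoc_{n-1}$ on the diagonals of $\pentagon_{n+2}$. At each face, indexed by a subdivision, it is chiseled by inequalities indexed by the subpolygons of that subdivision. Concretely, we work in the space of ``kinematic'' variables $X_{ij}$ attached to diagonals. The facet-defining inequalities are then indexed by subpolygons $P\subseteq\pentagon_{n+2}$ whose corners are vertices of $\pentagon_{n+2}$. The normal vector of such an inequality is a sum of the diagonal normals $\sum_{d\subset P} e_d$, plus a small $\epsilon$-weighted correction.

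\textbf{Step 1: define the candidate fan.} For each Matryoshka $M$, regarded as a set of polygons, let $C_M$ be the cone generated by the normal vectors $v_P$ for $P\in M$. A Matryoshka is built by first choosing a subdivision and then wrapping adjacent groups. So $C_M$ decomposes into a ``subdivision part'' generated by the diagonals used, which is a cone of the associahedral fan. It also has a ``wrapping part'', which is a cone of a smaller fan of the same type living in the normal cone of that associahedral face. This is exactly the $\X$ in $Y$ chiseling structure: at a face of the associahedron $Y$ we chisel a polytope from the family $\X$. The polytope chiseled at a face indexed by a subdivision is, recursively, a product of cosmohedra of the pieces, or a fan refining their associahedral fans.

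\textbf{Step 2: show that $\{C_M\}$ is a complete fan refining the associahedral fan.} We argue by induction on $n$. Fix a maximal cone $\sigma_T$ of the associahedral fan, indexed by a triangulation $T$. We show that the cones $C_M$ with $M$ having $T$ as its underlying subdivision triangulate $\sigma_T$. After a change of coordinates on $\sigma_T$ (coordinates given by the diagonal weights), the wrapping vectors become the normals of a nested-set or ``tubing'' fan. Wrappings of adjacent outermost polygons are tubes in the dual tree of $T$. Nested wrappings are then compatible tubings, so the cones form the nested set fan of a graph associahedron, which is a complete fan by known results. Non-maximal subdivisions are handled by restricting to faces. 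This also shows that $C_M\cap C_{M'}=C_{M\cap M'}$ and that $\dim C_M=|M|$.

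\textbf{Step 3: show that this fan is the normal fan of the AFV polytope.} For each Matryoshka $M$, the inequalities indexed by $P\in M$ hold with equality on a nonempty face, and all other inequalities are strict there. We prove this by checking the local convexity (wall-crossing) condition on the height function defined by the right-hand sides of Definition \ref{def:AFVcosmo}. For every pair of adjacent maximal cones $C_M, C_{M'}$ differing in one polygon, the linear relation among the generators must have the correct sign. There are two kinds of walls. A \emph{flip} of a diagonal is controlled by the associahedron's own convexity (ABHY), once $\epsilon$ is small. A \emph{rewrapping} inside a fixed triangulation is controlled by the $\epsilon$-order terms. Because the cones form a fan, local convexity at every wall yields that it is the normal fan, and reversal of inclusion gives the anti-isomorphism $M\mapsto F_M$.

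\textbf{Main obstacle.} The hard step is the rewrapping walls in Step 3. There the inequality is a subtle $\epsilon$-order comparison, and one must verify a submodularity-type inequality for the AFV constants over all configurations of three adjacent wrapped groups. We would first try to express those constants as a sum over contained diagonals of a strictly supermodular function of the polygon. That would reduce every rewrapping wall to a single inequality among four polygons $P\cup Q, P\cap Q, P, Q$.
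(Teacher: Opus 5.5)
There is a genuine gap, and it starts in Step 1: your cones $C_M$ have the wrong generators and the wrong shape. In Definition \ref{def:AFVcosmo} the chiseling inequalities are indexed by non-crossing chord sets $\C$, i.e.\ by subdivisions, not by subpolygons. Their normals are exactly $\sum_{C\in\C}e_C$, and $\varepsilon$ enters only through the right-hand sides $\sum_{P\in\S(\C)}B(P)$. Accordingly, the rays of the cone of $M$ are the subdivisions all of whose polygons lie in $M$, and these cones are typically not simplicial.

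For example, take the hexagonal Matryoshka with triangles $012,023,035,345$, quadrilaterals $0123,0345$, and the hexagon, so $7$ polygons. In the coordinates $y_{ij}=x_i-x_j$ on the associahedral cone, its cone is $\{y_{03}\ge y_{02}\ge 0,\ y_{03}\ge y_{35}\ge 0\}$. This is $3$-dimensional with $4$ rays, indexed by $\{03\},\{02,03\},\{03,35\},\{02,03,35\}$. So $\dim C_M=|M|$ fails. Worse, no simplicial fan can realize the Matryoshka poset at all, because the intervals below maximal Matryoshkas are not Boolean (the cosmohedron is not simple).

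The correct local statement, which the paper uses, is that inside $\cone(T)$ the Matryoshkal cones are the cones of $\BrAssocFan(T)\cap\R^{E(T)}_{\ge0}$. The orthant cut produces new faces indexed by bracketings of contractions of $T$, i.e.\ Matryoshkas with a coarser underlying subdivision. One must also check that these cones do not depend on which triangulation $T$ one works in; this is Lemma \ref{lem:f_T}(4) together with the intersection computation in Theorem \ref{thm:cosmofan3}. Your phrase ``handled by restricting to faces'' hides exactly this work.

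Step 3 inherits the simpliciality assumption. Checking one sign per wall suffices only once you know the heights come from a single linear functional on each maximal cone. For simplicial cones this is automatic, but here the rays are dependent: in the example above, $\1_{\{02,03\}}+\1_{\{03,35\}}=\1_{\{03\}}+\1_{\{02,03,35\}}$. So you must first prove that the AFV right-hand sides satisfy every such relation on every cone. These are exact identities at order $\varepsilon$, and no smallness argument supplies them. They are the ``many equalities'' the paper names as the obstacle to working with $\AFV_{n-1}$ directly.

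The paper sidesteps this in four moves:
\begin{enumerate}
\item It constructs vertices $c_M=a_T+\varepsilon f^T(b_B)$ from Loday's associahedron and Devadoss's bracket associahedra. Devadoss's vertex coordinates depend only on $\tree(B)$, so flips give edges parallel to $e_i-e_j$ (Remark \ref{rem:b=b}).
\item It shows the normal cone at $c_M$ lies in $\cone(M)$, using swaps and flips.
\item It reads off the inequalities through the telescoping identity $\sum_{vw^\perp\in\C}(b_B)_{vw}=-\sum_{P\in\S}b(P)$, which is precisely the consistency you would need.
\item It matches the result with $\AFV_{n-1}$ by a linear map.
\end{enumerate}
Your ``main obstacle'' is therefore misplaced. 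The rewrapping walls are routine: they follow from concavity of $\b$, i.e.\ the generalized-permutahedron property of bracket associahedra. The real difficulty is making the chiselings at different vertices of the associahedron agree exactly.
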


\subsection{\textsf{Polyhedral properties of the cosmohedron}}\label{sec:intropoly}

The search for a cosmohedron that models the combinatorics of Matryoshkas is analogous, and in fact closely related, to earlier constructions of polyhedra with a desired combinatorial structure. An important motivating example is the \emph{associahedron}, whose faces are in bijection with the plane trees of a given size.

The cosmohedron is obtained from the associahedron through a careful chiseling procedure:
at each vertex $a_T$ corresponding to the binary tree $T$, we chisel a copy of the \emph{bracket associahedron} $\Br_T$.
The chiselings at the different vertices interact with each other, so they must be carried out very precisely. We must place the right realization of each $\Br_T$ in the correct position, angle, and  size, so that the different bracket associahedra glue correctly into one polytope with the desired combinatorics. The $3$-cosmohedron is shown in Figure \ref{fig:cosmohedron}. The precise construction is given in Section \ref{sec:cosmohedron}.

\begin{figure}[h]
\begin{center}
\includegraphics[width=.95\linewidth]{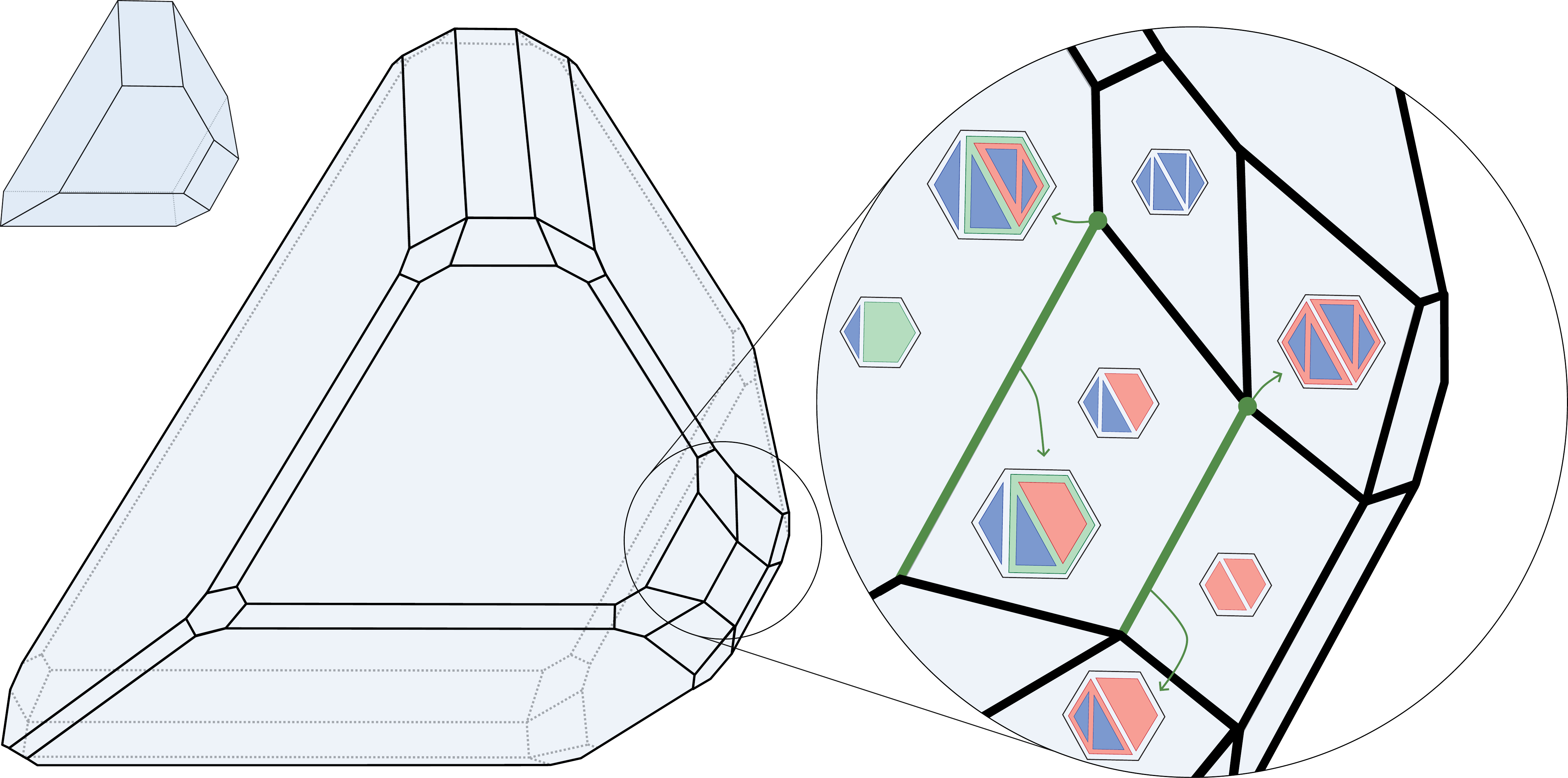}
\end{center}
\caption{The $3$-cosmohedron as a blowup of the $3$-associahedron, and the Matryoshkas labeling some of its faces.  \label{fig:cosmohedron}}
\end{figure}

Arkani--Hamed, Figueiredo, and Vaz\~ao \cite{AFV} proposed a construction of the cosmohedron $\AFV_{n-1}$ as an $(n-1)$-dimensional polytope in $\R^{(n+2)(n-1)/2}$; see Definition \ref{def:AFVcosmo}. This embedding gives the polytope an elegant inequality description, but it does not offer a direct path to prove its correctness, as we explain in Sections \ref{sec:introsubtleties} and \ref{sec:subtleties}.

Our approach is to construct a different embedding $\Cosmo_{n-1}$ of the cosmohedron in (a hyperplane in) $\R^n$ which is closer in spirit to the family of generalized permutahedra. 
To do so, we begin by building its normal fan $\CosmoFan_{n-1}$ in $\R^n/\R$, and proving that it has the correct combinatorial structure of Matryoshkas; we do this in Definition \ref{def:cosmofan} and Theorem \ref{thm:cosmofan3}. 
Next, we construct explicit coordinates for the vertices of $\Cosmo_{n-1}$, and prove that its normal fan is $\CosmoFan_{n-1}$ in Theorem \ref{thm:cosmohedron}.
Since we know the rays of the fan, we can readily use them to compute the inequality description of $\Cosmo_{n-1}$ in Theorem \ref{thm:cosmoineqs}. 
Finally, with inequality descriptions at hand for both of them, we find a linear isomorphism between $\Cosmo_{n-1}$ in $\R^n$ and $\AFV_{n-1}$ in $\R^{(n+2)(n-1)/2}$ in Theorem \ref{thm:samecosmo}. This proves the combinatorial structure of the cosmohedron $\AFV_{n-1}$ as conjectured in \cite{AFV}.

\subsection{\textsf{Enumerative properties of the cosmohedron}}

In Section \ref{sec:enum} we turn to enumerative questions. The facets of the cosmohedron are in bijection with the non-trivial subdivisions of $\pentagon_{n+2}$, which are enumerated by the Hipparchus--Schr\"oder numbers. This sequence is well understood, and is in fact one of the earliest combinatorial sequences, dating to the second century BC; for an interesting historical account, see \cite{EC2, StanleyHipparchus}.

The vertices correspond to the maximal Matryoshkas; we prove that they are enumerated by sequence A177384 in the OEIS. We give a recursive formula for the sequence and a simple polynomial differential equation for the power series:

\begin{theorem} \label{thm:D-algebraic}
The generating function $M(x) = x^2 + 2x^3 + 10x^4 + 72x^5 + \cdots$ for the number of maximal Matryoshkas is characterized by the polynomial differential equation 
\[
M(x) = x^2 + M(x)M'(x).
\]
\end{theorem}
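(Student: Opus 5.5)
The plan is to prove the recursion
\[
f_2=1,\qquad f_m=\frac{m+1}{2}\sum_{\substack{a+b=m+1\\ a,b\ge 2}} f_a f_b \quad (m\ge 3),
\]
where $f_m$ is the number of maximal Matryoshkas of an $(m+1)$-gon. Then we check that this recursion is exactly the coefficientwise content of $M=x^2+MM'$. The indexing is forced by the initial terms. The triangle has one maximal Matryoshka, which gives $x^2$. The square has two: its two triangulations, each wrapped once. The pentagon gives $5\cdot 2=10$ and the hexagon gives $72$.

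\emph{Step 1 (structure of maximal Matryoshkas).} We use the ``outside in'' description: a Matryoshka of a polygon $P$ is a subdivision of $P$ together with a Matryoshka inside each of its cells. Suppose $M$ is maximal and its outermost subdivision of $P$ is nontrivial.
\begin{itemize}
\item The outermost subdivision uses exactly one diagonal. If it used two or more diagonals, we could choose one of them, $d$, and insert an extra intermediate level: first cut $P$ along $d$, then subdivide the two halves as before. This yields a strictly larger Matryoshka under containment, contradicting maximality.
\item Each cell of a triangle is just the triangle itself, so the recursion bottoms out there.
\item The Matryoshkas placed in different cells are independent. A Matryoshka is a set of polygons, so wrappings in disjoint cells are not ordered relative to each other.
\item $M$ is maximal exactly when the Matryoshka in each half is maximal.
\end{itemize}
Hence maximal Matryoshkas of $P$ are in bijection with pairs $(d,(M_1,M_2))$, where $d$ is a diagonal of $P$ and $M_1,M_2$ are maximal Matryoshkas of the two sides of $d$. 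As a sanity check, the hexagon gives $3\cdot(2\cdot 2)+6\cdot(1\cdot 10)=72$. I expect this step to be the main obstacle. It requires carefully matching the containment order with the outside-in recursive description, so that ``maximal'' means exactly what is claimed. I would argue it directly from the definitions by producing the refinement explicitly.

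\emph{Step 2 (counting diagonals).} A diagonal of the $(m+1)$-gon cuts it into an $(a+1)$-gon and a $(b+1)$-gon with $a+b=m+1$ and $a,b\ge 2$. To count these, pick a starting vertex ($m+1$ ways) together with an ordered pair $(a,b)$. This determines a diagonal, and every diagonal arises exactly twice, once for $(a,b)$ and once for $(b,a)$. Therefore
\[
\sum_{d} f_{a(d)}f_{b(d)}=\frac{m+1}{2}\sum_{a+b=m+1}f_af_b,
\]
which gives the displayed recursion.

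\emph{Step 3 (generating function).} We have $[x^m]\,MM'=\sum_{a+b=m+1} b\,f_af_b$. Symmetrizing in $a\leftrightarrow b$ turns this into $\frac{m+1}{2}\sum_{a+b=m+1} f_af_b$, so $[x^m]\,MM'=f_m$ for $m\ge 3$. Since $MM'$ starts in degree $3$, we also get $[x^2](x^2+MM')=1=f_2$. So $M=x^2+MM'$.

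Finally, the equation characterizes $M$ among power series with $M=x^2+O(x^3)$. In $[x^m]\,MM'$ we have $b\ge 2$, hence $a\le m-1$. So the coefficients of $MM'$ in degree $m$ involve only $f_a$ with $a<m$, and the equation determines all coefficients recursively.
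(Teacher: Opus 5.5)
Your proof is correct and follows the paper's argument: a maximal Matryoshka splits at the top level along a single diagonal into two independent maximal Matryoshkas, and the resulting recursion is then read off as the coefficient $[x^m]\,MM'$. The only difference is bookkeeping. The paper fixes the base edge $e$ and counts the $k+1$ positions of the piece containing $e$, which gives $m_n=\sum_k (k+1)m_km_{n-k}$ and matches the derivative weight directly. You instead count all diagonals symmetrically and symmetrize $MM'$. You also supply two arguments the paper leaves implicit: why the top-level split of a maximal Matryoshka uses exactly one diagonal, and why the equation determines $M$ uniquely.
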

\noindent Thus Matryoshkal numbers form one of the simplest sequences in the notoriously difficult family of \emph{$D$-algebraic power series} \cite{Melczer}. For instance, we do not know how to prove the asymptotic behavior of this sequence; see Conjecture \ref{conj:asymptotic}.

We also give an intriguing description for the $f$-vectors of cosmohedra: their generating function equals its own compositional inverse up to a simple transformation. Consider the $f$-polynomials of the cosmohedral fans $f_n(t)$ as well as the simple transformation $g_n(t) = ((1+t)f_n(t)-1)/t$. The coefficients of $g_n(t)$ are  obtained by adding consecutive coefficients of $f_n(t)$:
\begin{eqnarray*}
&& f_1(t)=1, \, f_2(t)=1+2t,\,  f_3(t)=1+10t+10t^2, \,  f_4(t) = 1 + 44t + 114t^2 + 72t^3, \ldots \\
&& g_1(t)=1, \, g_2(t)=3+2t, \, g_3(t)=11+20t+10t^2, \, g_4(t) = 45+158t+186t^2+72t^3, \ldots 
\end{eqnarray*} 

\begin{theorem} \label{thm:cosmofvector0} The $f$-polynomials of cosmohedra form the unique sequence $\{f_n(t)\}_{n \geq 1}$ such that the power series in $(\R[t])[x]$
\begin{eqnarray*}
&& x - f_1(t)x^2 - f_2(t)x^3 - f_3(t)x^4 - \ldots \\
&& x + g_1(t)x^2 + g_2(t)x^3 + g_3(t)x^3 + \ldots 
\end{eqnarray*}
are compositional inverses, where $g_n(t) = ((1+t)f_n(t)-1)/t$ for $n \geq 1$.
\end{theorem}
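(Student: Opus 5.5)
By Theorem \ref{thm:cosmofan1}, $f_n(t)$ can be computed purely from Matryoshkas: the coefficient of $t^k$ counts Matryoshkas whose face has codimension $k$, i.e.\ those built with $k$ nested layers of subdivision. I will first check the indexing against the data: $f_2=1+2t$ matches the square and $f_3=1+10t+10t^2$ matches the pentagon. So $f_n$ is attached to the $(n+1)$-gon, the coefficient of $x^{n+1}$. The statement's ``$x^3$'' in the second series I read as a typo for $x^4$.

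\textbf{Step 1: a recursion for $f_n$.} I will use the ``outside in'' description: a Matryoshka on a polygon $P$ is either the trivial one $\{P\}$, or a nontrivial subdivision of $P$ together with a Matryoshka inside each piece. The outermost nontrivial subdivision adds one to the codimension. This should give
\[ f_P(t) \;=\; 1 + t\sum_{D \neq \{P\}} \;\prod_{Q\in D} f_Q(t), \]
where $D$ ranges over nontrivial subdivisions of $P$ and $f_Q$ depends only on the number of sides of $Q$. For the pentagon this gives $1+t\big(5(1+2t)+5\big)=1+10t+10t^2$, as it should. The real work here is to extract, from the anti-isomorphism of Theorem \ref{thm:cosmofan1}, that the codimension of a face equals the number of nested subdivision layers in its Matryoshka. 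I expect this to follow from the rank function of the Matryoshka poset under containment, with maximal Matryoshkas having exactly $n-1$ layers. \textbf{This is the step I expect to need the most care.}

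\textbf{Step 2: reinterpret $g_n$.} Adding the trivial subdivision to the sum turns Step 1 into
\[ \sum_{D} \prod_{Q\in D} f_Q \;=\; \frac{f_P - 1}{t} + f_P \;=\; \frac{(1+t)f_P-1}{t}, \]
where now $D$ ranges over all subdivisions of $P$. Hence $g_n(t)$ is exactly the sum, over all dissections of the $(n+1)$-gon, of the product of the weights $f$ of its pieces.

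\textbf{Step 3: the dissection--inversion lemma.} Give each $(k+1)$-gonal piece the weight $w_k$, and let $C(x)=x+\sum_{n\ge1}c_nx^{n+1}$, where $c_n$ is the total weight of dissections of the $(n+1)$-gon and the marked root edge carries $x$. Decomposing at the piece containing the root edge gives
\[ C = x + \sum_k w_k C^{k+1}. \]
This holds because the non-root edges of the root piece each carry a dissected (or degenerate, i.e.\ single-edge) subpolygon. Equivalently, $C$ is the compositional inverse of $y-\sum_k w_k y^{k+1}$. Taking $w_k=f_k$ and using Step 2 yields the stated inversion.

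\textbf{Step 4: uniqueness.} Uniqueness follows by comparing coefficients of $x^{n+1}$ in $B(C(x))=x$, where $B(y)=x$-series $y-\sum f_ky^{k+1}$. One gets $g_n - f_n = P_n(f_1,\dots,f_{n-1})$ for some polynomial $P_n$. Since $g_n-f_n=(f_n-1)/t$, this equation determines $f_n$ recursively, starting from $f_1=1$.
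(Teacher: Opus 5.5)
Your proof is correct and follows the same route as the paper's. Both use the outermost-subdivision recursion $f_n=1+t\sum\prod_i f_{n_i}$ over nontrivial subdivisions, then add the trivial subdivision to get $g_n$ as a sum over all subdivisions, then invert; the only difference is that you prove the inversion step directly via the root-piece equation $C=x+\sum_k f_kC^{k+1}$, where the paper cites the associahedral Lagrange inversion of \cite[Theorem 2.4.3]{AguiarArdila}. Two small fixes: the fact you flag in Step 1 is exactly Lemma~\ref{lem:dim2}, which gives codimension $=|M_{\text{not min}}|$ (additive over the pieces, not a nesting depth), and your polygon indexing is off by one, since $f_n$ and $c_n$ belong to the $(n+2)$-gon and $w_k$ to the $(k+2)$-gon, which is what your pentagon check and the exponent $k+1$ actually use.
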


We also give a strikingly similar formula for the $f$-vector of the correlatron.

\subsection{\textsf{Elusive qualities of the cosmohedron}}\label{sec:introsubtleties}

As mentioned above, Arkani--Hamed, Figueiredo, and Vaz\~ao's cosmohedron $\AFV_{n-1}$ is an $(n-1)$-dimensional polytope in $\R^{(n+2)(n-1)/2}$, given by a combinatorially rich system of inequalities; see Definition \ref{def:AFVcosmo}. How might one prove its combinatorial structure?
The standard methods don't give a simple approach. 

One might try to prove that $\AFV_{n-1}$ has the correct incidences between vertices and facets. However, we did not have a formula for the vertices of $\AFV_{n-1}$ before; and now that we do thanks to Proposition \ref{prop:cosmovertex}, we see they are rather intricate. Proving the vertex-facet incidences would be a highly non-trivial matter. 

To circumvent this issue, one might instead describe the normal fan of $\AFV_{n-1}$, and prove the corresponding wall-crossing inequalities, as described for example in \cite[Lemma 2.10]{ACEP} or \cite[Theorems 6.1.5–6.1.7]{toricvarieties}. We know at the outset that this computation would be much more cumbersome than usual, because the normal fan is not simplicial. 
Furthermore, in this high dimensional embedding, the $(n-1)$-dimensional normal fan lives in a combinatorially intricate quotient of an $\frac12(n+2)(n-1)$--dimensional space, so the description of the normal fan is  far from unique. These obstacles make this computation very subtle as well.

These are the reasons why we proceeded by giving a different construction of the cosmohedron $\Cosmo_{n-1}$ in $\R^n$, in the spirit of generalized permutahedra, and proved its combinatorial structure. Then, as we somewhat optimistically hoped at the outset, the polytopes $\Cosmo_{n-1}$ and $\AFV_{n-1}$ turned out to be linearly, and thus combinatorially equivalent.

However, proving the correctness of $\Cosmo_{n-1}$ is still a delicate matter.  
Cosmohedra are related to several polytopes in the mathematical literature, some of which are known to be relevant in physical applications, and many of which are generalized permutahedra. These include permutahedra \cite{BooleStott, Schoute}, associahedra \cite{CSZ, Loday, Stasheff, Tamari}, bracket associahedra \cite{Bonatto, Laplante} (which are special cases of graph associahedra \cite{CarrDevadoss, Devadoss, Postnikov}), permutoassociahedra \cite{CastilloLiu2, Kapranov}, and permutonestohedra \cite{Gaiffi}, among others. There are numerous methods to prove the combinatorial structure of such polytopes.

In Section \ref{sec:subtleties}  we explain that the cosmohedron $\Cosmo_{n-1}$ is more subtle than its predecessors in several ways: it is not very symmetric, it is not simple, it depends very delicately on the chiseling, we do not know how to express it as a deformation of an easy polytope, we do not know how to express it as a Minkowski sum of easy polytopes, and its faces factor combinatorially but not geometrically. For these reasons, proving its combinatorial structure requires new ideas, as described above.
%which we described in Section \ref{sec:intropoly}.

\subsection{\textsf{Cosmological properties of the cosmohedron}}

Let us close this introduction by briefly explaining the physical origins of this project. 
The content of this section is not essential to the mathematical understanding of this paper, but it is essential to the discovery of these mathematical objects, and it gives rise to many interesting new directions.

The past decade has seen the discovery of startling and deep new connections between fundamental physics and new mathematical structures in combinatorics and geometry. 
The story began with the most basic and fundamental physical processes in nature--the collision and scattering of elementary particles. Because of quantum mechanics, the outcome of any given scattering process is uncertain; only the probability for different states can be predicted. Given an initial state $i$ and a final state $f$, the probability is $|{\cal A}_{i,f}|^2$ where ${\cal A}_{i,f}$ is the \emph{scattering amplitude}. The textbook method for computing these amplitudes is to
sum over 
\emph{Feynman diagrams} representing the different ways in which the collision events can take place in space and time.

The connection to combinatorics and geometry can most easily and vividly be illustrated in an especially useful toy model, the so-called \emph{Tr($\phi^3$)} theory, for ``colored" scalar particles represented by $N \times N$ square matrices. In the simplest limit (``tree-level"), the amplitude turns into the sum over all planar trivalent (known in physics as \emph{cubic}) tree diagrams, or what is the same, a sum over the triangulations of an $n$-gon. 

A priori to a physicist, there is no obvious combinatorics here: we simply have a collection of diagrams that have to be collected and summed over, in no particular order, to arrive at the final result. But a combinatorialist knows that there is something very special about this particular collection of objects: the triangulations of an $n$-gon are captured by the vertices of a polytope, the associahedron. 

This link turns out to be the tip of a large iceberg of much larger significance to physics beyond ``merely" the combinatorics of diagrams. A particular embedding of the associahedron allows us to extract mathematical expressions for the scattering amplitude from the geometry~\cite{Arkani-Hamed:2017mur}. This particular embedding makes it natural to think of the associahedron as a Minkowski sum of the simplest possible pieces -- a collection of simplices -- and quite remarkably, this fact is intimately related to (and indeed lets us naturally discover from the bottom-up) the generalization from particle scattering to scattering processes in string theory. 
Over the past few years, these developments have led to the discovery of a number of surprising connections between seemingly totally different physical theories~\cite{Arkani-Hamed:2023swr} and, almost as an incidental byproduct, have provided new ways to simplify enormously complicated Feynman diagram calculations, sometimes into a few lines.

The connection to physics also led to the discovery of new mathematical objects. For instance, the associahedron captures all tree diagrams, but what about diagrams that can be drawn on surfaces with arbitrary topology? The consistency of the physical picture demands that geometric objects encoding such diagrams must exist, and they have indeed been described in the physics literature \cite{Arkani-Hamed:2023lbd}.

This emerging connection between physics and combinatorics suggests a radical new picture for the laws of physics, where the notion of particle evolution in spacetime is not primary. Instead, the unifying combinatorial-geometric objects that control how all spacetime processes are ``glued together" play a more fundamental role. This dovetails nicely with an expectation widely shared by most theoretical physicists that the notion of spacetime cannot be fundamental but must eventually be supplanted by more elementary and abstract ideas. The new connection to combinatorics suggests a concrete strategy for pursuing this vision. 

Nowhere is the need for a new picture of physics supplanting spacetime more urgent than in cosmology. Amongst other things, the notion of time itself breaks down at the big bang singularity, so the standard picture of time evolution must likely be abandoned. 

Cosmology also offers us a rich set of physical observables to compute, and in principle measure, that turn out to be interesting generalizations of scattering amplitudes. It is widely believed that the very early history of the universe involved a period of exponentially rapid expansion known as \emph{inflation}, and that quantum fluctuations in this early inflationary phase were blown up to enormous scales, even larger than our observable universe, giving rise to all the ``clumpiness" in matter and all the interesting structures we see in the universe today. The details of these quantum fluctuations are reflected in the statistics of the distribution of mass and energy in the universe today and are known as \emph{cosmological correlation functions}. The cosmological correlators produced during inflation are 
computed in much the same way as particle scattering amplitudes, drawing diagrams for all possible ways the correlations could have been produced. 
But there is an important difference: while for scattering processes a single diagram represents a single elementary process, in cosmology, even a single diagram has much more structure, precisely having to do with the different time-ordering of ``histories" a given diagram can represent. 

If we return to the toy Tr$(\phi^3)$ theory, to compute a cosmological correlator, a given diagram must be further decorated by all the possible \emph{bracketings} of the graph, or equivalently, what we will call the possible \emph{Matryoshkas} associated with the corresponding triangulation. This arises in a very natural way when solving the Schr\"odinger equation $H \Psi = E \Psi$ to compute the wavefunction $\Psi(\phi)$ for $\phi$, from which we determine probabilities and correlation functions. The Schr\"odinger equation for $\Psi(\phi) = {\rm exp}[\-\psi(\phi)]$ can be solved in perturbation theory.  The exponential relating $\Psi(\phi)$ to $\psi(\phi)$ reflects the familiar exponential relationship between all graphs and connected graphs. Every connected cubic diagram with $n$ external legs, where the external legs carry spatial momentum $\vec{k}_i$, makes a contribution to the coefficient of  $\phi(\vec{k}_1) \cdots \phi(\vec{k}_n)$ in the Taylor expansion of $\psi(\phi)$ in powers of $\phi$. 
The contribution of a diagram to $\psi$ depends on ``energies" entering each subgraph -- this is the total sum of the magnitude of the momenta entering the subgraph. In Figure \ref{fig:Wavefunction},  these energies are given by perimeters of subpolygons.

\begin{figure}[h]
    \centering
    \includegraphics[width=\linewidth]{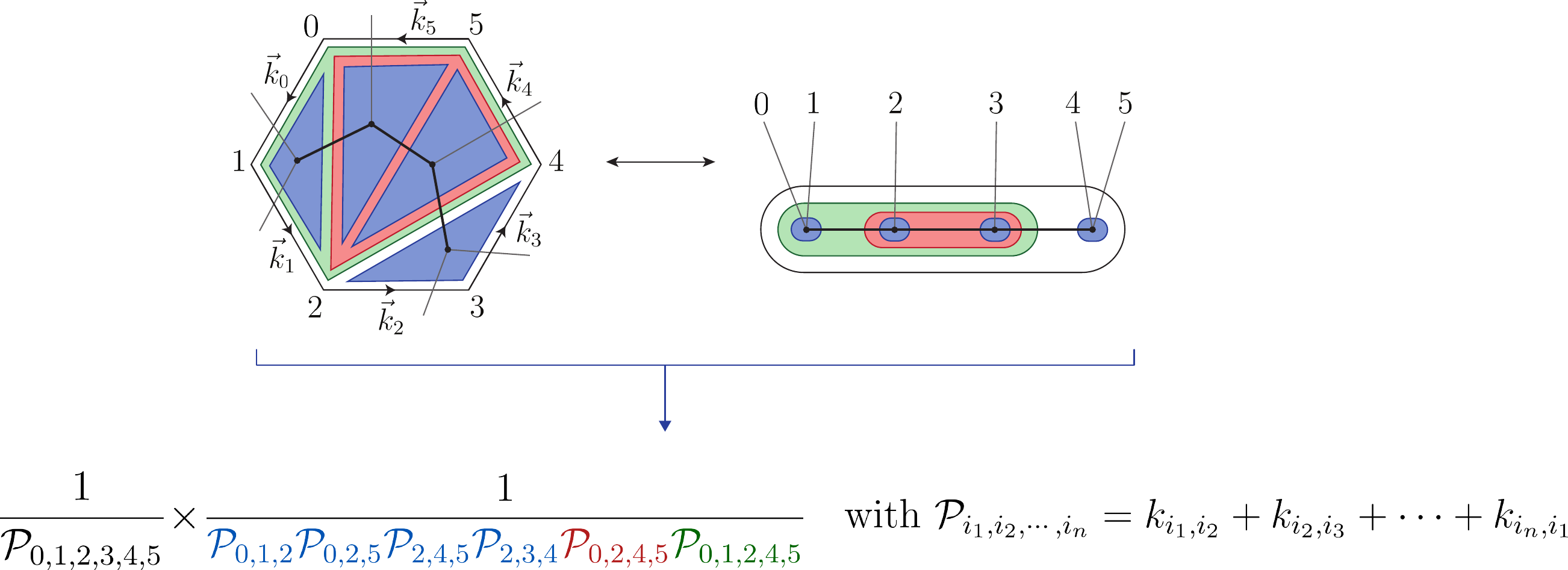}
    \caption{The contribution of a tree graph (or equivalently a triangulation) to the wavefunction is determined by the bracketings  of the graph (or equivalently the Matryoshkas of the triangulation). Here $k_{i,j}$ denotes the length between vertices $i$ and $j$, which  corresponds to the length of the momenta flowing through the edges of the graph, and $\mathcal{P}$ stands for the perimeter of the subpolygons entering the Matryoshka.}
    \label{fig:Wavefunction}
\end{figure}

Written in terms of $\psi$, the Schr\"odinger equation turns into a simple recursion relation:  the contribution from a given graph can be computed by first writing a factor of the inverse of the sum over all energies on the vertices, and then summing over all the ways of deleting an edge from the graph, and adding the energy of the edge to the vertices it connects to in the smaller graph with the edge deleted. We can run this recursion until there are no more edges to be cut, and every term encountered corresponds to a bracketing of the graph -- where to each bracket subgraph we associate the factor given by the total energy entering that subgraph. Phrased in the language of the triangulations, each bracket corresponds to a sub-polygon in the triangulation, to which we associate a factor given by its inverse perimeter. The terms contributing to the wavefunction are then associated with a maximal collection of nested sub-polygons in the triangulation, corresponding to a Matryoshka; see Figure \ref{fig:Wavefunction}.

Given the close connection between scattering amplitudes and cosmological correlators, physics suggested that some analog of the associahedron must exist, capturing not the combinatorics of all triangulations, but the factorially larger combinatorics of all possible Matryoshkas. 
This was the motivation leading to the discovery and definition of cosmohedra, which are polytopes precisely realizing this picture. 

Our main goal in this paper is to better understand the combinatorics and geometry of cosmohedra, provide proofs of many of their claimed properties, and also describe them in a way that makes more natural contact with similar objects that have recently been studied in combinatorics.

\subsection{\textsf{Outlook: $\X$ in $Y$ polytopes and related constructions in physics}}

Apart from its intrinsic mathematical interest, our treatment makes it clear that cosmohedra belong to a wider class of objects whose relevance to physics should extend far beyond cosmology.  We will see that cosmohedra generalize to what we can call \emph{$\X$ in $Y$} polytopes, where we begin with a polytope $Y$ and ``chisel" it in a way controlled by a family $\X$ of polytopes to get a new \emph{chiseled polytope}. At the level of fans, the normal fan of a (simple) polytope $Y$ is canonically refined using the normal fans of the polytopes in $\X$. We expect these $\X$ in $Y$ polytopes to be relevant to many settings in physics,  where, in addition to summing over all diagrams, we must keep track of various properties of subgraphs in each diagram.

An especially important example of this arises in the context of scattering amplitudes beyond ``tree" level. In diagrams with loops, the internal momenta are not determined uniquely and must be integrated over. This is associated with a rich pattern of possible divergences when loop momenta become large (``ultraviolet") or small (``infrared"). For an individual Feynman diagram, these divergences are controlled by the Newton polytopes of certain graph polynomials -- the so-called \emph{Symanzik polynomials} --associated
with the graph. In \cite{Arkani-Hamed:2023swr}, all Feynman diagrams are naturally realized as cones glued together into a complete ``Feynman fan", but this raises a question posed in \cite{Arkani-Hamed:2023swr}: can the combinatorics of the  ``Symanzik polytopes" associated with any graph be encoded in some way inside the full Feynman fan that glues all diagrams together? 
This is exactly what $\X$ in $Y$ polytopes allows us to do. In the Appendix, we sketch this construction in the first interesting case:  the  Symanzik $\U$-polynomials at one-loop level in the Tr($\phi^3$) theory.

\section{\textsf{Matryoshkas, bracketed trees,  and the cosmohedral fan}} \label{sec:fanM}

\subsection{\textsf{Matryoshkas}} \label{sec:Matryoshkas}

Consider a convex $(n+2)$-gon $P$. A \emph{subdivision} $\S$ is a set of sub-polygons whose union is $P$ and whose interiors are pairwise disjoint. A subdivision $\S$ can be naturally identified with the corresponding set $\C$ of non-crossing diagonals of $P$. We say $\S$ \emph{refines} $\S'$, and write $\S \geq \S'$, if the following three equivalent conditions hold:  
every polygon in $\S$ is contained in a polygon in $\S'$, 
every polygon in $\S'$ is a union of polygons in $\S$, and
the corresponding sets of chords satisfy $\C \supseteq \C'$.

A \emph{Matryoshka} or \emph{Russian doll} $M$ is a set of sub-polygons of $P$ such that for any non-minimal polygon $X$ in $M$, the maximal subpolygons $(M_{<X})_{\text{max}}$ of $X$ that are in the Matryoshka $M$  subdivide $X$.  
In particular, the set $M_{\text{min}}$ of minimal polygons in $M$ form the \emph{underlying subdivision} of $M$. We say $M$ \emph{refines} $M'$, and write $M \geq M'$, if the sets of polygons satisfy $M \supseteq M'$.

For example, Figure \ref{fig:Matryoshkas} shows two maximal Matryoshkas $M_1$ and $M_2$ on a hexagon. Removing a quadrilateral from $M_1$ gives a Matryoshka smaller than it. Removing a single triangle from $M_1$ does not give a non-Matryoshka, because the corresponding quadrilateral is not subdivided by its maximal subpolygons.

\begin{figure}[h]
    \centering
    \includegraphics[width=0.6\linewidth]{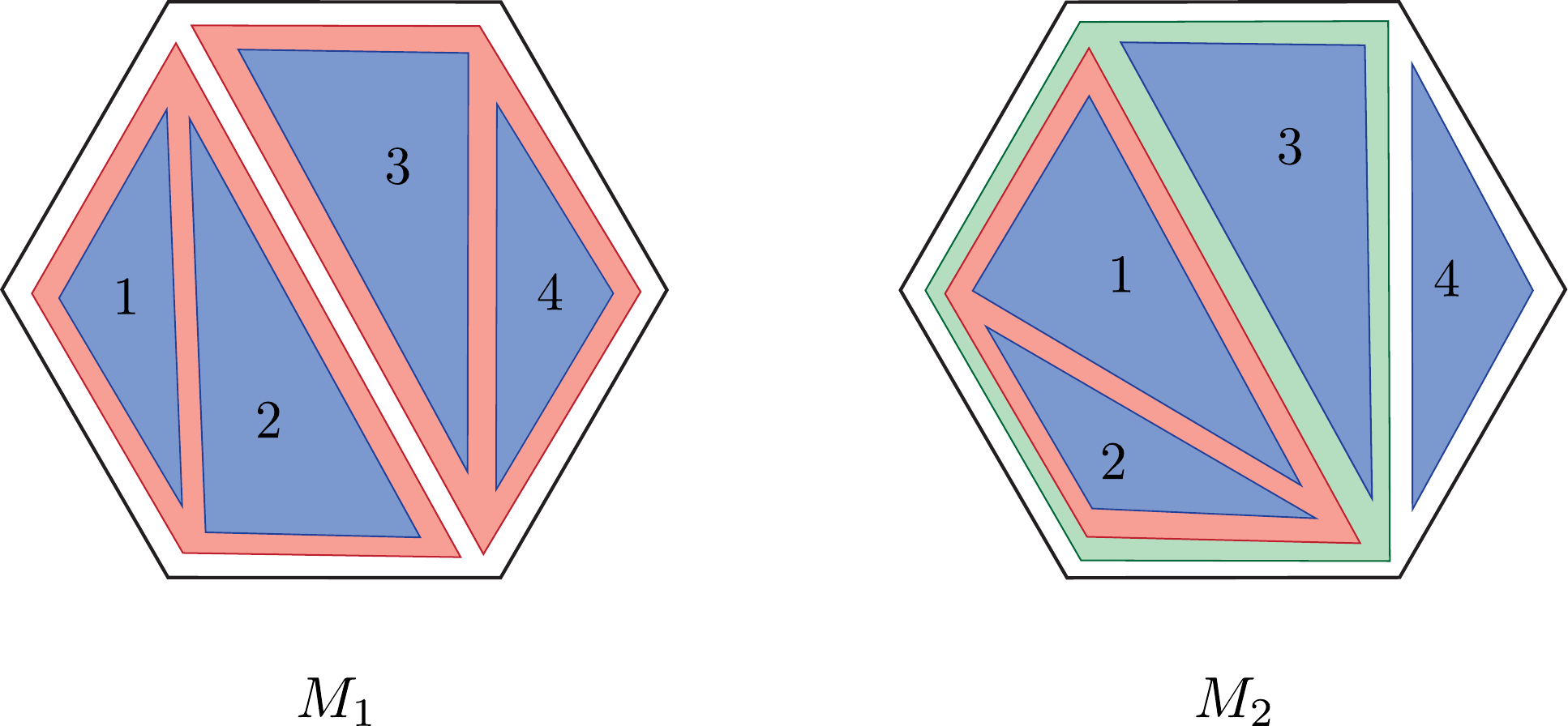}
    \caption{Two maximal Matryoshkas $M_1$ and $M_2$ on a hexagon. \label{fig:Matryoshkas}}
\end{figure}

Recursively, a Matryoshka is either:

\noindent
$\bullet$ $M=\{P\}$, the full polygon, or

\noindent
$\bullet$ $M = \S \cup  \bigcup_{S \in \S} M_S$: a subdivision $\S$ of $P$ and a Matryoshka $M_S$ on each part $S$ of $\S$.

\begin{definition}
The \emph{poset of Matryoshkas} $\Mat_{n-1}$ is the partially ordered set of Matryoshkas on the $(n+2)$-gon ordered by containment, with an additional maximum element $\widehat{1}$.
\end{definition}

\subsection{\textsf{The cosmohedral fan in terms of Matryoshkas}} 

Let us give the vertices of our $(n+2)$-gon $P=\pentagon_{n+2}$ the labels $0, 1, \ldots, n+1$ counterclockwise, with the edge $e$ joining $0$ and $n+1$ placed horizontally at the top.

\smallskip

Consider a triangulation (or subdivision) $\S$ of $P$.

\smallskip

\noindent
\textsf{Triangle (or polygon) labels.} We have a canonical labeling of the $n$ triangles of $\S$ with the vertex labels $1, \ldots, n$: We give the label $v$ to the first triangle of $\S$ that 
one encounters on a straight path from vertex $v$ to edge $e$. (In a subdivision, a $k$-gon gets $k-2$ such labels.)

\smallskip

\noindent
\textsf{Diagonal labels.} 
Each diagonal of $\S$ separates two triangles labeled $v$ and $w$ (or polygons labeled $V$ and $W$), with triangle $v$ between triangle $w$ and edge $e$; we give that diagonal the label $vw^\perp$ (or $VW^\perp$).

\bigskip

\noindent 
Now consider a Matryoshka $M$ of $P$, with underlying triangulation (or subdivision)  $\S$.

\smallskip

\noindent
\textsf{Polygon labels.} 
Each diagonal $vw^\perp$ is contained inside a unique smallest polygon of $M$, which we denote $\pentagon_M(vw^\perp)$. When $M$ is maximal, this is a bijection between the diagonals of $\S$ and the non-minimal polygons of $M$.

\smallskip

\noindent
\textsf{Containment poset.} 
The diagonals $vw^\perp$ are the elements of the \emph{containment poset} $\tau$ of $M$, whose order relations are given by containment of the corresponding polygons: 
$ij^\perp \geq kl^\perp$ if 
$\pentagon_M(ij^\perp) \supseteq 
\pentagon_M(kl^\perp)$.
This poset is a tree, which is binary if $M$ is maximal.

\smallskip

\noindent
\textsf{Matryoshkal cone.} To the Matryoshka $M$ we assign the cone
\begin{equation}\label{eq:cone(M)}
\cone(M) = \{x \in \R^n \, : \, x_i - x_j \geq x_k - x_l  \geq 0 \text{ when $\pentagon_M(ij^\perp) \supseteq \pentagon_M(kl^\perp)$}\}
\end{equation}
naturally associated to the containment poset.

\begin{figure}[h]
\begin{center}
\includegraphics[width=4in]{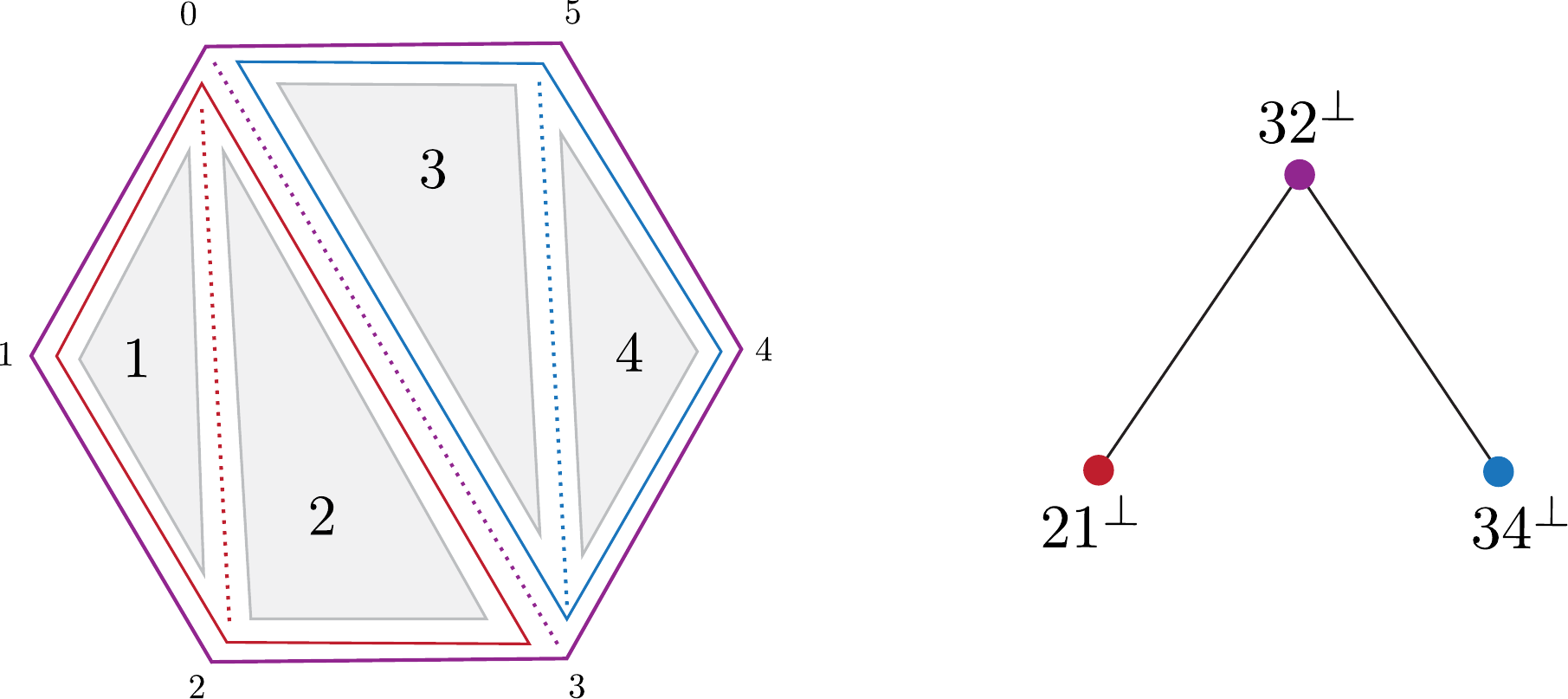}
\end{center}
\caption{The Matryoshka $M_1$ and its containment poset $\tau_1$.\label{fig:Matryoshkaposet}}
\end{figure}

\begin{example} 
In the Matryoshka $M_1$ of Figure \ref{fig:Matryoshkaposet} we have $\pentagon(21^\perp) \subset \pentagon(32^\perp)$ and $\pentagon(34^\perp) \subset \pentagon(32
^\perp)$, so $\cone(M) = \{x \in \R^{4} \, : \, x_3 - x_2 \geq \{x_2-x_1, x_3-x_4\}\geq 0 \}$. 
\end{example}

\begin{definition}\label{def:cosmofan} The \emph{cosmohedral fan} is
\[
\CosmoFan_{n-1} = \{\cone(M) \, : \, M \text{ is a Matryoshka of the $(n+2)$-gon}\}.
\]
\end{definition}

We will prove in Section \ref{sec:prooffan} that this is indeed a fan, and that it has the desired combinatorial structure:

\begin{theorem} \label{thm:cosmofan2}
The cosmohedral fan is a complete fan in $\R^n$.
The map $M \mapsto \cone(M)$ is an order-preserving bijection between the poset $\Mat_{n-1}$ of Matryoshkas of an $(n+2)$-gon and the face lattice $L(\CosmoFan_{n-1})$ of the cosmohedral fan.
\end{theorem}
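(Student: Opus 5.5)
The plan is to realize $\CosmoFan_{n-1}$ as a refinement of the normal fan of Loday's associahedron, one cone at a time, and then check that the refinements glue.

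\textbf{Step 1 (the associahedral fan).} For a subdivision $\S$ of $P$ with diagonal labels $vw^\perp$, I would first show that the cones
\[
\cone(\S)=\{x\in\R^n : x_v-x_w\ge 0 \text{ for every diagonal } vw^\perp\in\S\}
\]
form the normal fan of the Loday associahedron in $\R^n/\R$. This is the case where $M=\S$ is a Matryoshka with one layer. I expect this to follow from the standard description of Loday's fan by binary trees, since the labelling by ``first triangle met on the way to $e$'' is exactly the in-order labelling of the dual binary tree.

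For a triangulation $T$ with $n-1$ diagonals $d$, I then use the linear forms $y_d=x_i-x_j$ for $d=ij^\perp$. I will check that these forms are linearly independent on $\R^n/\R$. In these coordinates $\cone(T)$ is the positive orthant, and its faces $\cone(\S)$ for $\S\subseteq T$ are the coordinate faces $\{y_d=0 : d\notin \S\}$.

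\textbf{Step 2 (refining one orthant).} Fix $T$ and call two diagonals adjacent if they bound a common triangle. Then the sub-polygons of $P$ that are unions of triangles of $T$ correspond to connected sets of diagonals, which I will call tubes. A Matryoshka with underlying triangulation $T$ is the same as a maximal nested collection of such tubes, namely the ones containing each diagonal.

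The key claim is a reconstruction rule for a generic point $y$ of the orthant: the polygon $\pentagon_M(d)$ is the union of triangles spanned by the connected component of $d$ in $\{d' : y_{d'}\le y_d\}$. This defines a unique maximal $M$ with $y\in\cone(M)$, because $\cone(M)$ says precisely that $y$ decreases along the containment tree $\tau$. So the cones $\cone(M)$, for $M$ with underlying triangulation $T$, tile the orthant. The tiling is the familiar nested-set (graph-associahedron) fan of the line graph of the dual tree, intersected with the orthant.

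Faces of $\cone(M)$ are obtained by turning some inequalities into equalities:
\begin{itemize}
\item an equality $y_{d}=y_{d'}$ between a parent and child in $\tau$ merges the two polygons, i.e.\ deletes the inner polygon from $M$;
\item an equality $y_d=0$ removes the diagonal $d$ from the underlying subdivision.
\end{itemize}
I will check that each such operation yields a Matryoshka $M'\subseteq M$, and conversely that every sub-Matryoshka arises this way. This gives the order-preserving bijection cone by cone, and shows that $\cone(M)$ has dimension $|M|-1$ in $\R^n/\R$.

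\textbf{Step 3 (gluing across triangulations), the main obstacle.} The orthants $\cone(T)$ already form a complete fan by Step 1, so the only issue is compatibility on shared faces $\cone(\S)$. Concretely, the subdivision of $\cone(\S)$ induced from $T$ must be independent of the triangulation $T\supseteq \S$.

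I plan to prove this with the reconstruction rule from Step 2. On $\cone(\S)$ all diagonals of $T\setminus\S$ have $y=0$, so the connected components computed in $T$ merge exactly the triangles of each polygon of $\S$. The rule therefore reduces to the same rule for $\S$, with adjacency meaning ``bounding a common polygon of $\S$''. That rule refers only to $\S$, not to $T$.

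The delicate points are:
\begin{itemize}
\item the labels $vw^\perp$ of diagonals of $\S$ must not depend on $T$ (the polygon labels are unions of triangle labels, so I expect this to hold);
\item ties among zero values must be handled consistently.
\end{itemize}

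Once compatibility holds, the cones form a complete fan whose face poset is $\Mat_{n-1}$, with the empty face corresponding to $\widehat 1$, as claimed.
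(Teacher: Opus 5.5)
Your proposal is correct and is essentially the paper's proof: your coordinates $y_d=x_i-x_j$ are the isomorphism $f_T^{-1}$ of Lemma \ref{lem:f_T}, your refinement of each orthant is the positive bracket associahedral fan of Proposition \ref{prop:BrAssocFan}, and your compatibility on shared faces $\cone(\S)$ is what the paper gets from the identity $\cone_H(B)\cap\R^{E}_{\geq 0}=\cone_H(B/\overline{F})_{\geq 0}$ of \eqref{eq:brackettoprebracket} together with Lemma \ref{lem:f_T}.4. The paper's route is slightly cleaner than your generic-point rule, because it identifies every cone of the restricted subdivision, not only the top-dimensional ones, with an intrinsically defined $\cone(T',B')$, so no tie-breaking at zero is needed.

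Two small slips. First, in $\R^n/\R$ one has $\dim\cone(M)=|M_{\text{not min}}|$, not $|M|-1$; a single diagonal already gives $M=\{P,Q,R\}$ and a ray. Second, for a subdivision that is not a triangulation, your displayed formula for $\cone(\S)$ must also impose $x_v=x_{v'}$ for labels $v,v'$ of the same polygon, as your $y$-coordinate description does.
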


\subsection{\textsf{Bracketed trees and the cosmohedral fan}} \label{sec:fanT}

Let us give a different combinatorial description of Matryoshkas, which gives rise to a description of the cosmohedral fan in terms of \emph{bracketed trees}.

\noindent
\subsubsection{\textsf{Preliminaries: plane trees, triangulations, and the associahedral fan.}}

A \emph{plane tree} is a rooted tree where the children of each node are linearly ordered from left to right. The plane trees on $n+1$ leaves form a poset $\Tree_{n-1}$ where $T \leq T'$ if $T$ can be obtained from $T'$ by contracting edges. We include an additional maximum element $\widehat{1}$ that makes this poset into a lattice. 

The coatoms of this poset are the \emph{binary trees}, which have $n$ internal nodes and $n+1$ leaves.  It is sometimes convenient to erase the leaves of a binary tree, obtaining a plane tree with $n$ vertices.

If $\S$ is a subdivision of $\pentagon_{n+2}$, then the \emph{dual graph} $T(\S)$ has a vertex for each polygon of $\S$ and an edge between each pair of polygons that share an edge. This is a tree rooted at the polygon adjacent to the top edge $e$. Figure \ref{fig:twoMatryoshkas} shows the binary trees dual to the subdivisions of Matryoshkas $M_1$ and $M_2$.

If $\S$ is a triangulation then $T(\S)$  is a binary tree, and this is a bijection between the triangulations of $\pentagon_{n+2}$ and the trees in $\Tree_{n-1}$; the number of such objects is the Catalan number $C_n = \frac{1}{n+1}\binom{2n}{n}$.

The internal vertices of a plane tree $T$ with $n+1$ leaves are naturally labeled with sets that form a partition of $[n]$: we give the gaps between the leaves the labels $1, 2, \ldots, n$ from left to right, and label each internal node of $T$ with the gap(s) that are visible from it. 
We regard $T$ as the Hasse diagram of a pre-poset on $[n]$ whose root is the maximum element, and define
\[
\cone(T) = \{x \in \R^n/\R\1 \, : \, x_i \geq x_j \text{ for each } i \geq j \text{ in } T_{\interior}\}
\]
where $\1=(1,\ldots,1)$. In particular $x_i=x_j$ in $\cone(T)$ whenever $i$ and $j$ label the same vertex of $T$.

\begin{definition}
The \emph{associahedral fan} is
\[
\AssocFan_{n-1} = \{\cone(T) \, : \, T \text{ is a plane tree on $n+1$ leaves}\}.
\]
\end{definition}

Figure \ref{fig:assocfan} shows the associahedral fan $\AssocFan_3$.

\begin{figure}[h]
    \centering
\includegraphics[width=0.6\linewidth]{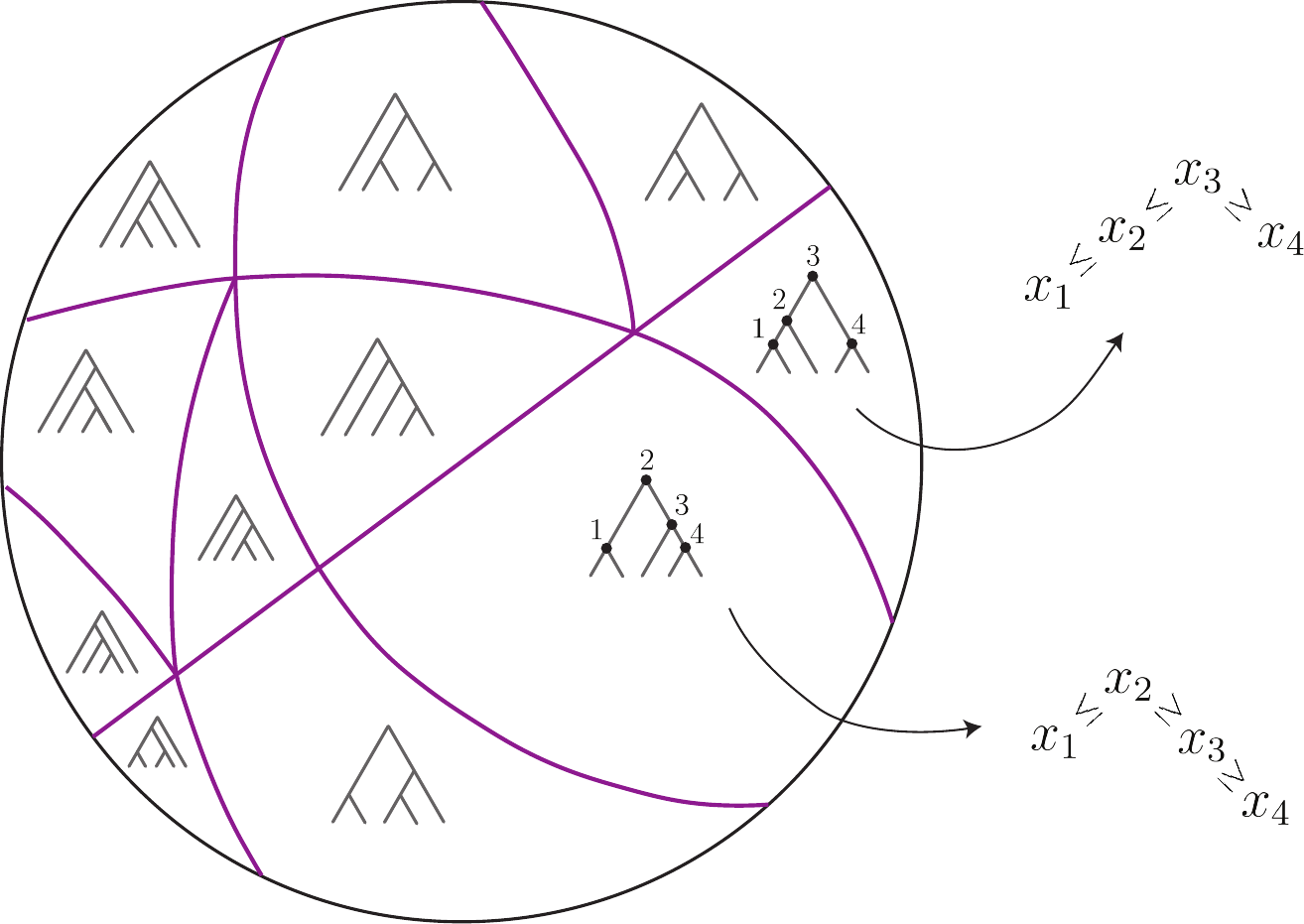}
    \caption{The three-dimensional associahedral fan.}
    \label{fig:assocfan}
\end{figure}

\begin{theorem} \cite{Loday} \label{thm:assocfan2}
The associahedral fan $\AssocFan_{n-1}$ is a complete fan in $\R^n$. 
The map $T \mapsto \cone(T)$ is an order-preserving bijection between the poset $\Tree_{n-1}$ of plane trees with $n+1$ leaves and the face lattice $L(\AssocFan_{n-1})$ of the associahedral fan.
\end{theorem}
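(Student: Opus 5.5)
This is Loday's theorem; we prove it directly, with the braid arrangement (type $A$ Coxeter) fan as the reference object. The plan is to show that $\AssocFan_{n-1}$ is a coarsening of the braid fan in $\R^n/\R\1$, and then read off its faces.

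\emph{Maximal cones.} Each $\cone(T)$ is defined by inequalities $x_i \ge x_j$ along the Hasse diagram of the pre-poset on $[n]$ encoded by $T$. Hence $\cone(T)$ is the union of the braid cones $\{x_{\sigma(1)} \ge \cdots \ge x_{\sigma(n)}\}$ over the linear extensions $\sigma$ of that poset. For a binary tree $T$ the poset is a rooted tree on all of $[n]$, so $\cone(T)$ is full-dimensional and simplicial: it has $n-1$ defining inequalities in an $(n-1)$-dimensional space, one per edge of $T_{\interior}$.

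\emph{Covering and disjointness.} We use the classical binary search tree insertion map. Given a generic $x$, order the indices $1,\dots,n$ by decreasing $x$-value. The root is the index $i$ with largest $x_i$; recursively build the left subtree on $\{1,\dots,i-1\}$ and the right subtree on $\{i+1,\dots,n\}$. This produces the unique binary tree $T$ with gap labels $1,\dots,n$ whose heap order, meaning each parent exceeds its children, is satisfied by $x$. So every generic $x$ lies in the interior of exactly one $\cone(T)$. Uniqueness comes from the fact that the heap condition plus the in-order labeling forces the root to be the global maximum, and then the argument recurses. Consequently the maximal cones cover $\R^n/\R\1$ and have disjoint interiors, and the sylvester congruence classes of permutations are precisely the fibers of this map.

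\emph{Faces and the order.} For a plane tree $T$ obtained from a binary tree $T'$ by contracting edges, contracting the edge between nodes $i$ and $j$ turns $x_i \ge x_j$ into the equality $x_i = x_j$. Therefore $\cone(T)$ is the face of $\cone(T')$ cut out by setting the contracted inequalities to equality. Since $\cone(T')$ is simplicial, its faces correspond bijectively to subsets of edges, hence to contractions of $T'$. This gives $\dim \cone(T) = \#\{\text{internal nodes}\} - 1$, and shows that $T \le T'$ exactly when $\cone(T) \subseteq \cone(T')$.

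\emph{Main obstacle.} The key remaining point is that any two cones $\cone(T_1)$ and $\cone(T_2)$ meet in a common face. Our approach is that for a plane tree $T$, the relative interior of $\cone(T)$ consists of those $x$ whose ties are exactly dictated by the nodes of $T$, together with strict heap inequalities. Applying the recursive insertion to such $x$, treating a block of tied maximal indices as a single node, recovers $T$ uniquely. Thus the relative interiors of the cones $\cone(T)$ partition the space. Combined with the previous step, which shows that faces of cones are again cones of the family, this gives the fan property: the intersection of two cones is a union of relatively open cones of the family that are faces of both, and by a standard argument it is a single common face.

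With the fan property established, completeness follows from the covering step, and the map $T \mapsto \cone(T)$ is an order-preserving bijection onto the face lattice. The extra element $\widehat{1}$ of $\Tree_{n-1}$ is sent to the whole space, which we adjoin as the top element of $L(\AssocFan_{n-1})$.
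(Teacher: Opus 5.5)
Your proof is correct, and it takes a different route from the paper. The paper gives no argument for this statement: it cites Loday, and later (Section~\ref{sec:cosmohedron}, via \cite{AguiarArdila, Postnikov}) it uses that $\AssocFan_{n-1}$ is the normal fan of $\Loday_{n-1}(\a)=\sum a_{ij}\Delta_{(i,j)}$.

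On that route the fan axioms and completeness come for free, because normal fans of polytopes are complete fans. The only work is identifying normal cones with tree cones. For the Minkowski sum, the normal fan is the common refinement of the normal fans of the simplices $\Delta_{[i,j]}$. Those cones record, for every interval $[i,j]$, the set of indices at which $x$ attains its maximum on $[i,j]$. Your recursive-maximum (Cartesian tree) construction reads off exactly this data, so the two arguments are close cousins.

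Yours, however, is self-contained:
\begin{itemize}
\item existence and uniqueness of the heap-ordered in-order tree give covering and disjoint interiors;
\item the same recursion applied to tied blocks partitions $\R^n/\R\1$ into relative interiors;
\item the fact that a convex union of faces of a polyhedral cone is a single face gives the intersection property.
\end{itemize}
You also see directly that $\AssocFan_{n-1}$ coarsens the braid fan. What your approach does not give is polytopality, which the paper genuinely needs later. The vertices $a_T$ and the edge directions $e_i-e_j$ of $\Loday_{n-1}(\a)$ are used in Definition~\ref{def:vertmat} and in Step 2 of the proof of Theorem~\ref{thm:cosmohedron}.

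There are two small points to tighten.
\begin{itemize}
\item The relative interior of $\cone(T)$ is cut out by the same-node equalities together with strict parent--child inequalities. It does not exclude further ties between coordinates in different subtrees; for example, two siblings may have equal values. So ``ties exactly dictated by the nodes'' is too strong. Your reconstruction is unaffected, because at each step you only take the maximum over the contiguous interval of gap labels of a subtree, where that subtree's root block is the strict maximum.
\item The converse ``$\cone(T)\subseteq\cone(T')$ implies $T\leq T'$'' does not follow from the simplicial-face argument alone. It relies on the relative-interior partition from your last step: a relative-interior point of $\cone(T)$ lies in the relative interior of some face $\cone(T'/S)$, which forces $T=T'/S$.
\end{itemize}
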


\noindent
\subsubsection{\textsf{Preliminaries: graph bracketings and bracket associahedral fans.}}

Let $H=(V,E)$ be a connected graph. A \emph{bracket} of $H$ is a connected subgraph of $H$ with at least one edge. We identify it with its set of edges. 
A \emph{bracketing} of $H$ is a collection $B$ of brackets such that any two brackets are either nested (one is contained in the other) or disjoint (they have no edges or vertices in common). We require every bracketing to contain the full graph $H$.

The \emph{poset of bracketings} $\Br(H)$  is the set of bracketings of $H$ ordered by containment, with an additional maximum element $\widehat{1}$. 

In a bracketing $B$, each edge $e$ is contained in a unique smallest bracket of $B$, which we denote $\bracket_B(e)$ or simply $\bracket(e)$.
This defines a pre-poset $\tree(B)$ on the edge set $E$, where $e \geq_B f$ if $e$ is \emph{outermore} than $f$ in the sense that $\bracket(e) \supseteq \bracket(f)$; or equivalently, every bracket containing $e$ also contains $f$. Because every bracket is contained in a unique smallest bracket, the poset $\tree(B)$ is a tree rooted at a unique maximal element. 
We obtain a cone
\[
\cone_H(B) = \{y \in \R^{E(H)} \, : \, y_e \geq y_f \text{ for each } e \geq_B f\}.
\]
Its dimension is $|B|$.

\begin{definition}
The \emph{bracket associahedral fan} of a graph $H$ is
\[
\BrAssocFan(H) = \{\cone_H(B) \, : \, B \text{ is a bracketing of $H$}\}
\]
\end{definition}

Figure \ref{fig:brassocfan} illustrates the bracket associahedral fan of a path of length 4  with edges $e,f,g$. The fan is three-dimensional, with all cones containing the line $y_e=y_f=y_g$, so we display a 2-dimensional slice.

\begin{theorem}  \cite{AFV, CarrDevadoss}
\label{thm:assocfan}
The bracket associahedral fan $\BrAssocFan(H)$ is a complete fan in $\R^{E(H)}$. 
The map $B \mapsto \cone_H(B)$ is an order-preserving bijection between the poset of bracketings $\Br(H)$ and the face lattice $L(\BrAssocFan(H))$ of the bracket associahedral fan.
\end{theorem}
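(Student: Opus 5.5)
The plan is to reduce the statement to the known theory of graph associahedra (nested set fans) by passing to the \emph{line graph} $L(H)$. The vertices of $L(H)$ are the edges $E(H)$, and two of them are adjacent when the corresponding edges of $H$ share a vertex.

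\textbf{Step 1: bracketings of $H$ are tubings of $L(H)$.} A set of edges of $H$ spans a connected subgraph with at least one edge exactly when it induces a connected subgraph of $L(H)$. So brackets of $H$ are exactly the tubes of the connected graph $L(H)$, with the full graph $H$ corresponding to the full tube $E(H)$. Two brackets are disjoint in the paper's sense (no common edges \emph{or} vertices) exactly when the corresponding tubes are disjoint and non-adjacent in $L(H)$. I will check that the remaining tubing axiom is automatic: if two such tubes were disjoint but adjacent, their union would be a tube, and this case is excluded by the "no common vertex" condition. Hence $\Br(H)$, with its extra $\widehat 1$, is the face poset of the graph associahedron of $L(H)$, that is, the nested set complex of the graphical building set of $L(H)$.

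\textbf{Step 2: the cones agree.} I will show that
\[
\cone_H(B)=\cone\{\1_S : S\in B\}+\R\1 ,
\]
where $\1_S\in\R^{E(H)}$ is the indicator vector of the bracket $S$. Since $H\in B$ and $\1_H=\1$, this is the usual nested-set cone with lineality space $\R\1$.

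The identification is an order-theoretic fact about the rooted tree poset $\tree(B)$. Consider the functions $y$ that are weakly increasing toward the root and constant on each bracket class $\{e:\bracket(e)=S\}$. Such a $y$ decomposes as
\[
y = c\1 + \sum_{S\neq H}\lambda_S \1_S, \qquad c\in\R,\ \lambda_S\ge 0,
\]
where $\lambda_S$ is the jump of $y$ from the class of $S$ to the class of its parent bracket. This works because the principal down-set of the class of $S$ in $\tree(B)$ is exactly the edge set of $S$. The converse inclusion is immediate. The same decomposition shows that $\dim \cone_H(B)=|B|$ and that faces of $\cone_H(B)$ correspond to sub-bracketings of $B$ containing $H$.

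\textbf{Step 3: invoke the nested set fan theorem.} With Steps 1 and 2 in hand, completeness, the fan property, and the order-preserving bijection with $L(\BrAssocFan(H))$ follow from the theorem of Carr--Devadoss and Postnikov that nested set cones of a building set form a complete simplicial fan (modulo $\R\1$), namely the normal fan of the graph associahedron.

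As a self-contained backup for completeness, I would give the greedy argument. For generic $y$, sort the edges by value and take as brackets the connected components of the subgraphs $\{e : y_e\le t\}$ as $t$ varies, together with $H$. This gives the unique bracketing $B$ with $y$ in the interior of $\cone_H(B)$. For intersections, $\cone_H(B)\cap\cone_H(B')$ should equal $\cone_H(B\cap B')$, which I would prove by comparing the generating indicator vectors.

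\textbf{Main obstacle.} I expect the main difficulty to be bookkeeping rather than depth. The points to get right are: the precise disjointness convention (shared vertex of $H$ means adjacent in $L(H)$), the convention that $H$ always lies in $B$ (which is what produces the lineality $\R\1$ and the claimed dimension $|B|$), and matching the adjoined $\widehat 1$ in $\Br(H)$ with the adjoined top element of the face lattice $L(\BrAssocFan(H))$.
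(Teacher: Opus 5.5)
Your route is the paper's own. The paper gives no independent proof of this theorem: it cites Carr--Devadoss and observes that bracketings of $H$ are exactly the tubings of the line graph $L(H)$, so $\BrAssocFan(H)$ is a graph associahedral fan. Your Steps 1 and 3 carry this out correctly.

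The one real error is a sign in Step 2. In the paper, $e\ge_B f$ means that $e$ is \emph{outer}more, so $\cone_H(B)$ consists of vectors that \emph{increase} toward the root of $\tree(B)$. For a proper bracket $S\in B$, however, the vector $\1_S$ is the indicator of a down-set of $\tree(B)$, so it decreases toward the root. For example, if $H$ is a path with edges $e,f$ and $B=\{\{e\},H\}$, then $f\ge_B e$. Hence $\cone_H(B)=\{y : y_f\ge y_e\}$, which does not contain $\1_{\{e\}}$.

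Your telescoping argument is fine, but the jumps enter with a minus sign. Write $y_S$ for the common value of $y$ on $\{e:\bracket(e)=S\}$. Then $y=y_H\1-\sum_{S\in B,\,S\neq H}\lambda_S\1_S$ with $\lambda_S=y_{\mathrm{parent}(S)}-y_S\ge 0$, so the correct identity is
\[
\cone_H(B)=\cone\{-\1_S : S\in B\}+\R\1 .
\]
Your greedy backup, built from the sublevel sets $\{e: y_e\le t\}$, already uses the correct orientation, so the proposal is internally inconsistent as written.

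The repair costs nothing. The map $y\mapsto -y$ is a linear automorphism of $\R^{E(H)}$ that preserves completeness, the fan axioms, and inclusions of cones. So the nested-set fan theorem for the cones $\cone\{\1_S: S\in B\}+\R\1$ transfers verbatim to the cones $\cone_H(B)$. Doing it this way also makes you independent of which normal-fan sign convention the cited source uses.

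Linear independence of the indicator vectors of the laminar family $B$ then gives three things: simpliciality modulo $\R\1$, $\dim\cone_H(B)=|B|$, and the identification of the faces of $\cone_H(B)$ with the sub-bracketings of $B$ containing $H$. This yields the order-preserving bijection.

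A small wording point: $\Br(H)$ is isomorphic to the face lattice of the \emph{normal fan} of the graph associahedron of $L(H)$. It is anti-isomorphic to the face lattice of the polytope itself, not equal to it.
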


\begin{figure}[h]
    \centering
\includegraphics[width=0.6\linewidth]{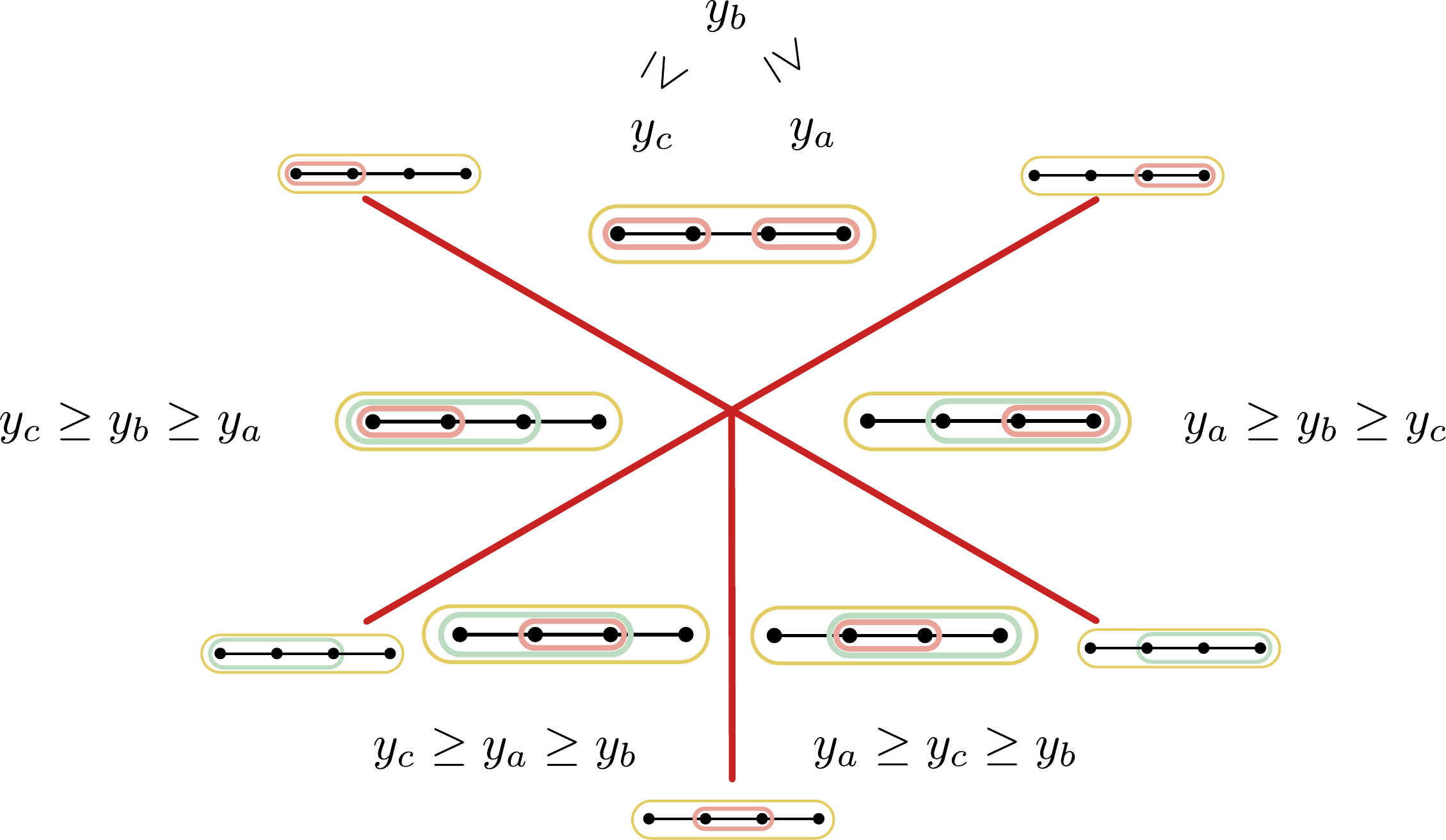}
    \caption{The bracket associahedral fan of a path.}
    \label{fig:brassocfan}
\end{figure}

\noindent
\textsf{Bracket associahedral fans and graph associahedral fans.}
The \emph{line graph} $L(H)$ of $H=(V,E)$ is the graph of edge adjacencies of $H$: it has a vertex for each edge $e$ of $H$, and the vertices corresponding to edges $e,f$ are connected in $L(H)$ if $e$ and $f$ share a vertex in $H$. The bracketings of $H$ are in order-preserving bijection with the tubings of  $L(H)$ in the sense of \cite{CarrDevadoss}, so 
\[
\{\text{bracket associahedral fans}\} \subsetneq \{\text{graph associahedral fans}\}.
\] 

\begin{figure}[h]
    \centering
\includegraphics[width=0.4\linewidth]{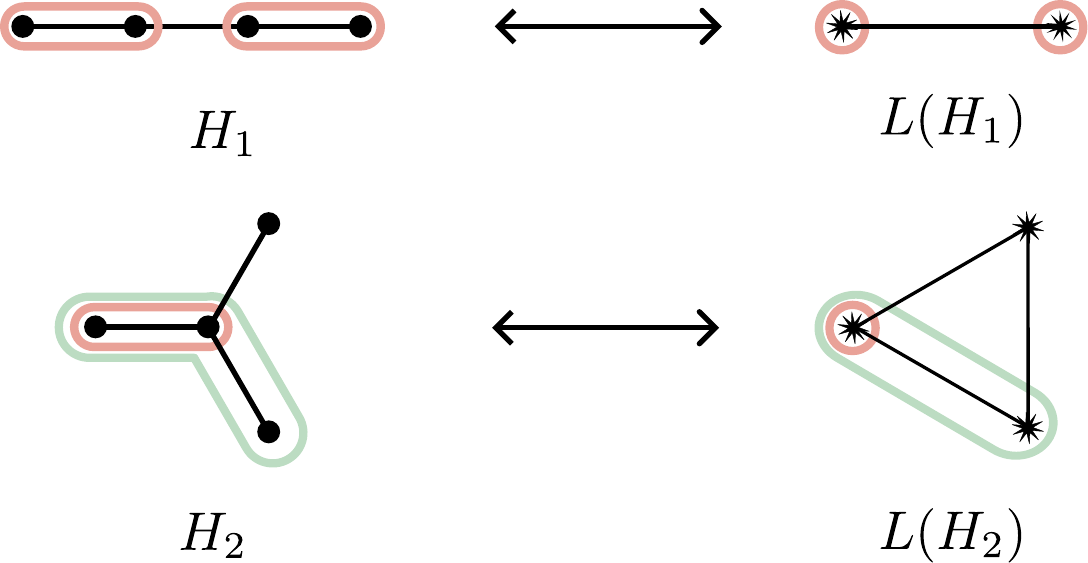}
    \caption{Bracketings of $H$ correspond to tubings of $L(H)$.}
    \label{fig:LineGraph_Bracket}
\end{figure}
The graph associahedral fan of the claw $K_{1,3}$ -- studied and depicted in 
\cite[Figure 3]{ArdilaReinerWilliams}
in its manifestation as the Dynkin diagram $D_4$ -- is a three-dimensional fan with 10 rays, corresponding to the 10 tubes of the graph. The reader may verify that this is the only graph on four vertices that has 10 tubes. Since the claw is not a line graph, this is not a bracket associahedral fan. It is then natural to ask:
\begin{center}
Which graph associahedral fans are bracket associahedral fans?
\end{center}

\smallskip

\noindent
\subsubsection{\textsf{The cosmohedral fan in terms of bracketed trees.} }

A \emph{bracketed tree} $(T,B)$ consists of a plane tree $T$ and a bracketing $B$ of $T$. Let us define the cone of a bracketed tree $(T,B)$ with $n+1$ leaves to be
\[
\cone(T,B) = \{x \in \R^n \, : \, x_i - x_j \geq x_k - x_l  \geq 0 \text{ when $\bracket_B(ij) \supseteq \bracket_B(kl)$}\}.
\]
Note that the inequalities of $\cone(T,B)$ correspond to the order relations of the tree poset $\tree(B)$, as illustrated in Figure \ref{fig:twoMatryoshkas}. 

The goal of this section is to show that the cones of the bracketed trees on $n+1$ leaves are precisely the cones of the cosmohedral fan $\CosmoFan_{n-1}$. We begin with a technical lemma.

\begin{lemma} \label{lem:dim}
For any bracketed tree $(T,B)$ we have
\[
\cone(T,B) \subseteq \cone(T), \qquad \dim \cone(T,B) = \dim \cone(T) = |V(T)_{int}|+1.
\]
\end{lemma}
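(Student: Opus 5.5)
The plan is to reduce both cones to a single linear parametrization by the root value and the edge differences of $T$, and then to exhibit an interior point. Throughout, $\cone(T,B)$ is read inside the same ambient space as $\cone(T)$. That is, coordinates $x_i$ whose labels sit on the same internal node of $T$ are identified. Each edge of $T$ joins a parent node with label $i$ to a child node with label $j$, and its edge coordinate is $y_{ij}=x_i-x_j$.

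\emph{Step 1: the containment $\cone(T,B)\subseteq\cone(T)$.} Every edge $ij$ of $T$ satisfies $\bracket_B(ij)\supseteq\bracket_B(ij)$. Taking $kl=ij$ in the definition of $\cone(T,B)$ gives $x_i-x_j\geq 0$ for every parent $i$ and child $j$. These are exactly the cover relations defining $\cone(T)$. Transitivity along paths then gives all the inequalities $x_i\geq x_j$ for $i\geq j$ in $T_{\interior}$.

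\emph{Step 2: linear coordinates.} Let $L_T$ be the linear space of vectors constant on the label set of each internal node. I will use the map
\[
\phi : L_T\to \R\times\R^{E(T)},\qquad \phi(x)=\bigl(x_{\mathrm{root}},\,(x_i-x_j)_{ij\in E(T)}\bigr).
\]
Since $T$ is a tree, $\phi$ is a linear isomorphism: $x$ is recovered from the root value by subtracting edge values down the unique path from the root. Under $\phi$, the cone $\cone(T)$ becomes $\R\times\R_{\geq0}^{E(T)}$. The cone $\cone(T,B)$ becomes $\R\times K_B$, where $K_B\subseteq\R_{\ge0}^{E(T)}$ is cut out by $y_e\geq y_f$ for $e\geq_B f$. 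Consequently both dimensions equal $1+\dim$ of the $y$-part, and $\dim\cone(T)=1+|E(T)|$. This matches the stated count $|V(T)_{int}|+1$ once the edges of $T$ are counted as in the paper's conventions, with the root edge and leaf-gap bookkeeping included. I will check that convention carefully against the small examples in Figure~\ref{fig:twoMatryoshkas}.

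\emph{Step 3: $K_B$ is full-dimensional in $\R^{E(T)}$.} This is where the content lies, and I expect it to be the main obstacle. The difficulty is in how the relation $e\geq_B f$ is read. If two edges with the same smallest bracket were forced to satisfy both $y_e\geq y_f$ and $y_f\geq y_e$, the dimension would drop to $|B|+1$. The statement therefore requires reading the constraints as the order relations of the rooted tree poset $\tree(B)$, as in Figure~\ref{fig:twoMatryoshkas}. In that reading, an outer edge dominates the edges strictly inside the brackets it encloses, and edges sharing a bracket impose no equalities on one another.

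Under this reading I will take
\[
y_e=\#\{\text{brackets of }B\text{ contained in }\bracket_B(e)\}+\varepsilon_e,
\]
with small, distinct, generic $\varepsilon_e>0$. This vector satisfies every defining inequality strictly. Strict inclusion of brackets raises the integer part by at least $1$, which the perturbations cannot undo. Hence a whole neighbourhood of this $y$ lies in $K_B$, so $\dim K_B=|E(T)|$.

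Applying $\phi^{-1}$ then gives $\dim\cone(T,B)=\dim\cone(T)$, which completes the argument.
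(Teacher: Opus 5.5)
The gap is the reinterpretation in Step 3. The definition imposes $x_i-x_j\geq x_k-x_l$ whenever $\bracket_B(ij)\supseteq\bracket_B(kl)$. So when two edges have the same smallest bracket, both inequalities hold and their differences are forced to be equal. You are not free to drop this. It is how the paper derives $x_i=x_{i'}$ for two labels of one node, from $\bracket_B(ik)=\bracket_B(i'k)$ for an adjacent label $k$. It is also why $\dim\cone_H(B)=|B|$, why a subdivision with the trivial bracketing gives the ray $\rho_\S$ with all chord differences equal to $1$, and why Lemma~\ref{lem:dim2} gives $|M_{\text{not min}}|+1$. The cone you analyse, with relations only for strict containment, is a different object: for $B=\{T\}$ it is all of $\cone(T)$. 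Your witness with distinct $\varepsilon_e$ lies outside the true $\cone(T,B)$ as soon as some bracket has two edges of its own. For the same reason, you should derive the identification of same-node coordinates as above rather than impose it before Step~1, because the containment $\cone(T,B)\subseteq\cone(T)$ needs it.

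Your aside is in fact the correct conclusion. Under the actual definition, $\dim\cone(T,B)=|B|+1$ in $\R^n$. This equals $\dim\cone(T)=|E(T)|+1$ exactly when $|B|=|E(T)|$, i.e.\ when $B$ is a maximal bracketing. For a triangulated pentagon with $B=\{T\}$ you get a ray against a $2$-dimensional cone in $\R^3/\R\1$, and the paper's own intersection example $x_3-x_{12}=x_3-x_4\geq 0$ shows the same drop. The paper's proof has the same implicit restriction: its final step, that the acyclic poset $\tree(B)$ implies ``no further equalities'', needs every class of the pre-poset $\tree(B)$ to be a single edge.

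So the right repair is to restrict the equal-dimension claim to maximal $B$ and state $|B|+1$ in general, not to change the definition. For maximal $B$, your Steps 1--3 work as written once the same-node equalities are derived. They then match the paper's argument, with your explicit interior point replacing its appeal to acyclicity.

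Finally, the count needs no root-edge or leaf bookkeeping, since leaves carry no coordinates. The identity $1+|E(T)|=|V(T)_{int}|+1$ holds because non-root internal nodes biject with edges via their parent edges, so it requires $V(T)_{int}$ to exclude the root. If the root is counted, $\dim\cone(T)$ in $\R^n$ is $|V(T)_{int}|$.
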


\begin{proof}
The inclusion and the formula for $
\dim \cone(T)$ are clear from the definitions. To prove that $\cone(T,B)$ has the same dimension, we first claim that for $x \in \cone(T,B)$ we have that $x_i=x_j$ if $i$ and $j$ label the same vertex of $T$. To see this, note that for an offspring  $k$ we have $\bracket_B(ik) = \bracket_B(jk)$ so $x_i-x_k = x_j-x_k$ (or analogously for the parent $k$, if they do not have offspring). 
Setting these equalities aside, the remaining inequalities of $\cone(T,B)$ are given by the acyclic poset $\tree(B)$, so they imply no further equalities.
\end{proof}

\begin{figure}[h]
    \centering
    \includegraphics[width=5.5in]{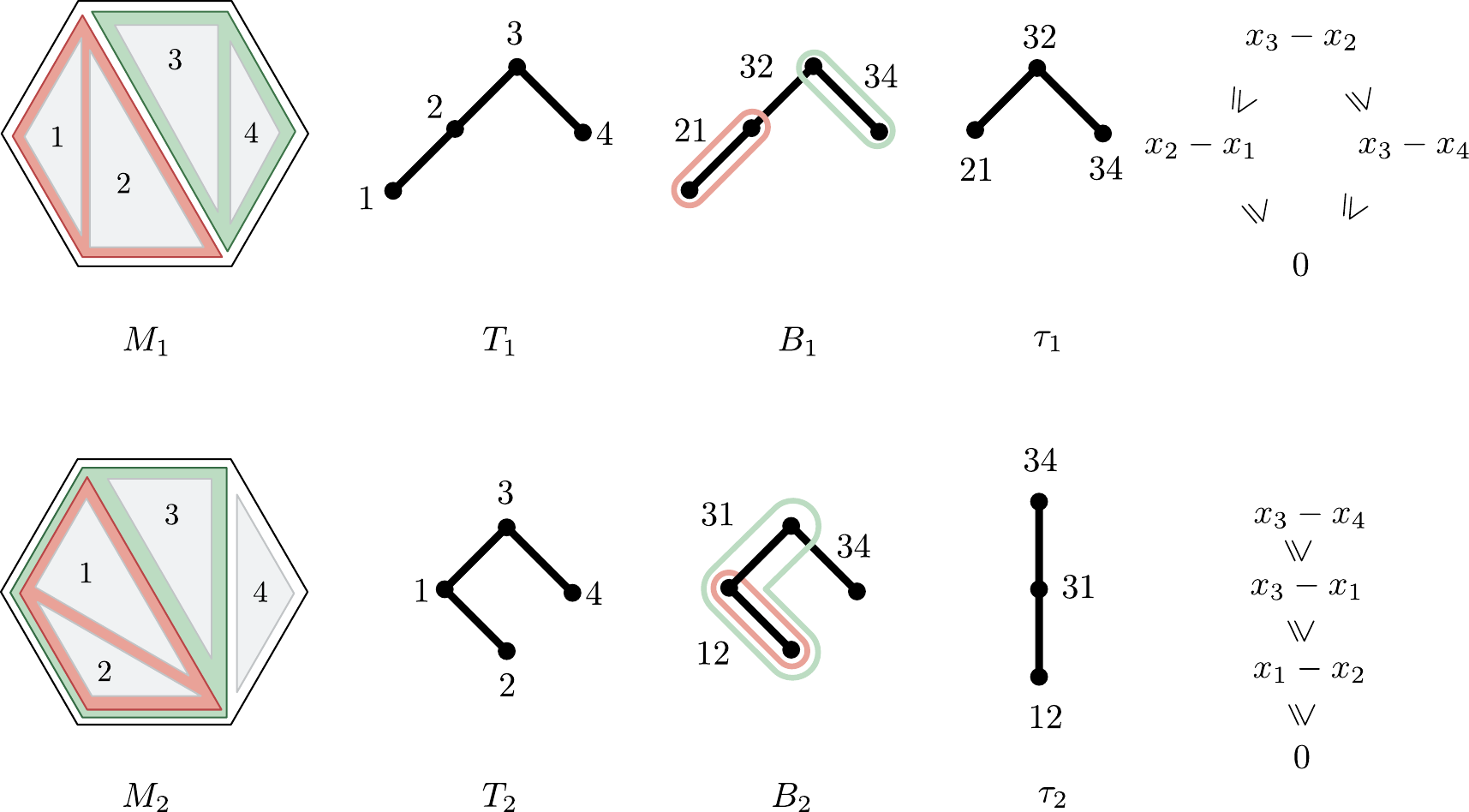}
    \caption{Two Matryoshkas, their corresponding bracketed trees, and their cones.}
    \label{fig:twoMatryoshkas}
\end{figure}

\noindent \textbf{\textsf{The bijection with Matryoshkas.}} 
For a Matryoshka $M$, let $\S=M_{\min}$ be the set of inclusion-minimal polygons in $M$, which form the underlying subdivision of $M$. 
\begin{enumerate}
\item The \emph{rooted planar tree} $T_M$ of $M$ is the dual graph of $\S$, rooted at the top polygon containing edge $e$.
\item Each non-minimal polygon $P$ in $M$ is a disjoint union of minimal polygons in $M_{\min}$, whose dual graph is a bracket of the tree $T_M$,
denoted $\bracket(P)$. The \emph{bracketing} $B_M$ of $M$ is $B_M = \{\bracket(P) \, : \, P \in M_{not\,  min}\}$.
\end{enumerate}

\begin{lemma} \label{lem:bijection}
The map $M \mapsto (T_M, B_M)$ is a bijection between the  Matryoshkas on an $(n+2)$-gon and the bracketed trees $\BrTree_{n-1}$ with $n+1$ leaves.   
\end{lemma}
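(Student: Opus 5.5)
The plan is to construct an explicit inverse map $(T,B) \mapsto M_{(T,B)}$ and check that both composites are the identity. The construction rests on the classical bijection between subdivisions $\S$ of $\pentagon_{n+2}$ and plane trees $T(\S)$ with $n+1$ leaves, via dual graphs rooted at the polygon containing $e$. Under this bijection, a set of polygons of $\S$ whose dual vertices span a connected subgraph $C$ of $T(\S)$ has union equal to a convex subpolygon $P_C$ of $\pentagon_{n+2}$, namely the region obtained by gluing those polygons along their shared diagonals. Conversely, a subpolygon of $\pentagon_{n+2}$ that is a union of polygons of $\S$ comes from a connected set of dual vertices.

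First I would verify that $(T_M,B_M)$ is well defined. Fix a non-minimal polygon $X \in M$. Then $X$ is subdivided by the minimal polygons of $M$ it contains, because iterating the defining condition of a Matryoshka down to minimal elements shows this. Those minimal polygons form a connected subtree of $T_M$, since a convex polygon subdivided by polygons of $\S$ has a connected dual graph. This subtree has at least one edge, because $X$ is non-minimal and so contains at least two minimal pieces. Hence $\bracket(X)$ is indeed a bracket.

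The main point is the nested-or-disjoint condition. Take two polygons $X,Y$ of $M$. I will show they are either nested or have disjoint interiors, by induction on the recursive description: $M=\S'\cup\bigcup_{S\in \S'} M_S$, where $\S'$ is the set of maximal proper polygons. Any two polygons of $M$ lie in a common part $S$ of $\S'$ and the claim follows by induction, or else they lie in different parts and have disjoint interiors. The remaining case involves $P$ itself, which contains everything.

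There is a subtlety here that I expect to be the main obstacle. The bracketing definition requires disjoint brackets to share no vertices of $T_M$, not merely no edges. Two polygons with disjoint interiors could share a minimal polygon only if they overlap, so their vertex sets are indeed disjoint. I also need that distinct polygons give distinct brackets, which holds because a bracket determines its union of polygons. The full polygon $P$ maps to the bracket $T_M$ itself when $P$ is non-minimal. When $M=\{P\}$, the tree is a single vertex with the trivial bracketing, and I would need to check that this convention matches the requirement that every bracketing contain $H$. I would handle this edge case by noting that a one-vertex tree has an empty or trivial bracketing by convention.

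For the inverse, given $(T,B)$, let $\S$ be the subdivision dual to $T$ and set
\[
M=\S\cup\{P_C : C\in B\}.
\]
I then check the Matryoshka condition for a polygon $P_C$. The maximal brackets of $B$ strictly inside $C$ are pairwise disjoint in vertices. Together with the singleton vertices of $C$ not covered by any of them, their polygons tile $P_C$, so $P_C$ is subdivided by its maximal subpolygons in $M$. Moreover, the minimal elements of $M$ are exactly the polygons of $\S$, since every $P_C$ contains at least two of them. The two composites are the identity because both constructions are determined by $\S\leftrightarrow T$ together with $P_C \leftrightarrow C$. This completes the bijection.
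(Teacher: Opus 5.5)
Your proof is correct and follows the same route as the paper: the Matryoshka condition makes $B_M$ a bracketing of the dual tree $T_M$, and the inverse takes the subdivision dual to $T$ together with the polygon $P_C$ glued from the cells of each bracket $C \in B$. You supply considerably more detail than the paper's three-line sketch, namely the convexity of $P_C$, the vertex-disjointness of brackets coming from interior-disjoint polygons, and the edge case $M=\{P\}$, whose convention the paper also leaves implicit.
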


\begin{proof}
The Matryoshka condition on $M$ implies that $B_M$ is indeed a bracketing of $T_M$. Conversely, for any pair $(T,B)$, the plane tree $T$ is dual to a subdivision $\S$ of the $(n+2)$-gon, and each bracket in $B$ is dual to a polygon $P$ that is the union of polygons in $\S$. The bracketing condition on $B$ implies that these polygons form a Matryoshka. 
\end{proof}

\begin{lemma} 
Under the bijection of Lemma \ref{lem:bijection}, the cone of the Matryoshka $M$ equals the cone of the bracketed tree $(T_M,B_M)$:
\[
\cone(M) = \cone(T_M, B_M). 
\]
\end{lemma}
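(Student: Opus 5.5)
The plan is to show that the two systems of inequalities defining $\cone(M)$ and $\cone(T_M,B_M)$ are literally the same, once the labels on both sides are matched. The first step is to match the labels. Under the duality between the subdivision $\S = M_{\min}$ and the plane tree $T_M$, the polygons of $\S$ correspond to the internal nodes of $T_M$. The polygon labels of $\S$ (the vertices $v$ whose straight path to $e$ first meets that polygon) should coincide with the gap labels of the corresponding node (the gaps visible from it). I would check this by induction on the number of diagonals, or directly: the leaves of $T_M$ correspond to the edges of $P$ other than $e$, and the gap between consecutive leaves corresponds to the polygon vertex $v$ they share. A diagonal of $\S$ separating polygons labeled $V$ (closer to $e$) and $W$ then corresponds to the edge of $T_M$ joining parent $V$ to child $W$. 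So the diagonal labeled $vw^\perp$ is exactly the tree edge $vw$ (for any choice $v\in V$, $w\in W$), and this matching is a bijection between the diagonals of $\S$ and the edges of $T_M$.

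The second step is to match the containment relations. For a diagonal $d$ of $\S$, the smallest polygon $\pentagon_M(d)$ of $M$ containing $d$ (in its interior, i.e.\ with both adjacent minimal polygons inside it) is a non-minimal polygon, since $d$ is interior to no minimal polygon. By the construction of $B_M$, the polygons of $M$ that contain the two polygons adjacent to $d$ correspond exactly to the brackets of $B_M$ that contain the tree edge dual to $d$. The correspondence $P\mapsto \bracket(P)$ preserves containment, since it sends a union of minimal polygons to the dual subtree. Hence $\bracket_{B_M}(vw) = \bracket(\pentagon_M(vw^\perp))$, and $\pentagon_M(ij^\perp)\supseteq \pentagon_M(kl^\perp)$ if and only if $\bracket_{B_M}(ij)\supseteq \bracket_{B_M}(kl)$.

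With both matchings in hand, the defining inequalities $x_i-x_j\ge x_k-x_l\ge 0$ range over identical index pairs on the two sides, so $\cone(M)=\cone(T_M,B_M)$. One subtlety when $\S$ is not a triangulation: a diagonal's label $VW^\perp$ involves sets, so there are several choices of $i\in V$, $j\in W$. Both definitions are to be read as imposing the inequality for all such representatives, and they then agree. Equivalently, as in the proof of Lemma \ref{lem:dim}, both cones force $x_i=x_{i'}$ for labels $i,i'$ of the same node, so the choice of representative is immaterial.

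I expect the main obstacle to be the first step: verifying carefully that the "straight path to $e$" labeling of polygons agrees with the "visible gaps" labeling of tree nodes. I would handle it by induction, cutting $P$ along the diagonal closest to $e$ (or removing a leaf polygon) and checking that both labelings restrict compatibly to the two pieces.
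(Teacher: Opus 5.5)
Your proposal is correct and takes the same route as the paper, whose proof is the one line ``this follows readily from the definitions.'' You unpack that claim. The polygon labels of $M_{\min}$ agree with the gap labels of $T_M$, so each diagonal $vw^\perp$ corresponds to the parent--child tree edge $vw$. The map $Q\mapsto\bracket(Q)$ gives $\bracket_{B_M}(vw)=\bracket(\pentagon_M(vw^\perp))$ and is an order-embedding; it reflects containment as well as preserving it, because a bracket with at least one edge determines its vertex set, and this is what your ``if and only if'' uses. Hence the two inequality systems coincide.
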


\begin{proof}
This follows readily from the definitions.
\end{proof}

\begin{corollary}
We have
\[
\CosmoFan_{n-1} = \{\cone(T,B) \, : \, (T,B) \text{ is a bracketed tree with $n+1$ leaves}\}.
\]
\end{corollary}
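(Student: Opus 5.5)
This corollary follows by combining Definition \ref{def:cosmofan} with the two preceding lemmas, so I would simply chain them. By definition, $\CosmoFan_{n-1}$ is the collection $\{\cone(M)\}$ as $M$ ranges over all Matryoshkas of the $(n+2)$-gon. The immediately preceding lemma says $\cone(M) = \cone(T_M,B_M)$ for every such $M$. Hence
\[
\CosmoFan_{n-1} = \{\cone(T_M,B_M) \, : \, M \text{ a Matryoshka}\}.
\]

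It then remains to see that, as $M$ ranges over all Matryoshkas, the pair $(T_M,B_M)$ ranges over all bracketed trees with $n+1$ leaves. This is exactly the surjectivity half of Lemma \ref{lem:bijection}. Reindexing the set by bracketed trees rather than Matryoshkas gives the claimed equality. Injectivity is not needed here, since we are only comparing sets of cones, but it does confirm that no cone is listed twice under different labels.

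The only point requiring any care is that the two definitions of cone really match under the labeling conventions. The inequality $x_i - x_j \geq x_k - x_l$ is indexed by diagonals $ij^\perp$ in one definition and by tree edges $ij$ in the other. Under duality, each diagonal of $\S$ corresponds to an edge of $T_M$ with the same endpoint labels. Moreover, $\pentagon_M(ij^\perp)$ corresponds to $\bracket_{B_M}(ij)$, and containment of polygons corresponds to containment of brackets. This correspondence is already absorbed into the previous lemma, so I expect no real obstacle: the proof is a two-line substitution.
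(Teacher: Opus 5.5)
Your argument is correct and is exactly what the paper intends; it gives no separate proof, and the corollary follows by combining the definition of $\CosmoFan_{n-1}$, the lemma $\cone(M)=\cone(T_M,B_M)$, and the surjectivity in Lemma~\ref{lem:bijection}. One small quibble with your aside: injectivity of $M\mapsto(T_M,B_M)$ does not show that distinct bracketed trees give distinct cones (that is part of Theorem~\ref{thm:cosmofan3}), but this plays no role in the set equality.
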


\bigskip

\noindent \textbf{\textsf{The poset of bracketed trees.}} 
Consider a graph $G$ and a bracketing $B$. If $\beta$ is a bracket of $B$, the \emph{deletion} is the bracketing $B-\beta$ is a bracketing of $G$. If $\mu$ is a minimal bracket of $B$, the \emph{contraction} is the bracketing $B/\mu :=  \{\beta - \mu \, : \, \beta \in B, \beta \neq \mu\}$ of the tree $T/\mu$ where edge $\mu$ is contracted -- so the edge is deleted and its vertices are identified; see Figure \ref{fig:bracketing}. If $E$ is a downset in (the tree pre-poset of) $B$, we can define $B/E$ by successively contracting the brackets in $E$ in a non-decreasing order.

\begin{figure}[h]
    \centering
    \includegraphics[width=2.5in]{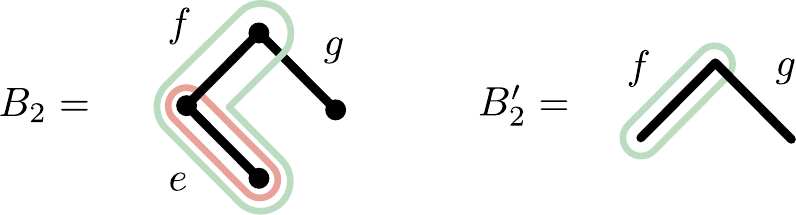}
    \caption{A contraction of a bracketing.}
    \label{fig:bracketing}
\end{figure}

The \emph{poset of bracketed plane trees} with $n+1$ leaves, denoted $\BrTree_{n-1}$, is defined by deletion and contraction as follows. Locally, we have a cover relation $(T,B) \lessdot (T',B')$ if either:\\ 
(i) $T = T'$ and $B = B' - \beta'$ for a bracket $\beta'$ of $B'$, or \\
(ii) $T=T'/\mu'$ and $B = B'/\mu'$ for a minimal bracket $\mu'$ of $B'$.\\ 
Globally, we have a poset relation $(T,B) \leq (T',B')$ if $T = T'/E$ for a set of edges $E$ that is a downset of the tree poset of $B'$,  and  $B \subseteq B'/E$.
We augment this poset with a maximum element $\widehat{1}$.

\begin{proposition} \label{prop:BrTree}
The bijection $M \mapsto (T_M, B_M)$ of Lemma \ref{lem:bijection}, 
is a poset isomorphism between the lattice of Matryoshkas $\Mat_{n+2}$ and the poset of bracketed trees $\BrTree_{n-1}$.
\end{proposition}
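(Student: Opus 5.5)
We already know from Lemma \ref{lem:bijection} that $M \mapsto (T_M,B_M)$ is a bijection, and both posets have an adjoined maximum $\widehat{1}$, which we match to each other. So it remains to show that for Matryoshkas $M,M'$ we have $M \subseteq M'$ if and only if $(T_M,B_M) \leq (T_{M'},B_{M'})$ in $\BrTree_{n-1}$. We will work with the global description of the order on $\BrTree_{n-1}$, and translate each ingredient into the language of polygons.

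\textbf{The dictionary.} Let $M'$ have underlying subdivision $\S'$ and dual tree $T'=T_{M'}$. Edges of $T'$ correspond to diagonals of $\S'$. For a set $E$ of edges of $T'$, contracting $E$ corresponds to erasing the corresponding diagonals, which gives a coarser subdivision $\S$ with dual tree $T'/E$.

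The key observation concerns when $E$ is a downset of $\tree(B_{M'})$. In that case, every bracket of $B_{M'}$ is a union of parts of $\S$ or is entirely inside one part; the first kind survive in $B_{M'}/E$. More precisely, $E$ is a downset exactly when, for each bracket $\beta$ meeting $E$, either $\beta\subseteq E$ or all edges of $E$ inside $\beta$ lie in strictly smaller brackets. Translated, this says every polygon of $\S$ is a polygon of $M'$, namely the polygon of the bracket formed by a component of $E$, or a minimal polygon if the component is empty. The contracted bracketing $B_{M'}/E$ then consists of the brackets of polygons of $M'$ that are unions of parts of $\S$ and are not themselves parts of $\S$.

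\textbf{($\Rightarrow$)} Suppose $M\subseteq M'$. Every minimal polygon $S$ of $M$ lies in $M'$, and the restriction $M'_{\leq S}$ is a Matryoshka on $S$. So $S$ is subdivided by polygons of $\S'$, and $\S'$ refines $\S=M_{\min}$. Let $E$ be the set of diagonals of $\S'$ not in $\S$; then $T_M=T'/E$. Each component of $E$ lies inside some $S\in\S$, and $S\in M'$, so the edges of $E$ inside $S$ form a union of brackets of $B_{M'}$ capped by $\bracket(S)$. This shows $E$ is a downset. Every non-minimal polygon $X$ of $M$ lies in $M'$ and is a union of parts of $\S$, so $\bracket(X)\in B_{M'}/E$. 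Hence $B_M\subseteq B_{M'}/E$.

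\textbf{($\Leftarrow$)} Suppose $T_M=T'/E$ with $E$ a downset and $B_M\subseteq B_{M'}/E$. By the dictionary, every minimal polygon of $M$ is a polygon of $M'$. Every non-minimal polygon of $M$ has its bracket in $B_{M'}/E$, so it is also a polygon of $M'$. Thus $M\subseteq M'$.

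\textbf{Main obstacle.} The delicate step is the downset$\Leftrightarrow$``parts of $\S$ are polygons of $M'$'' equivalence. It needs care with contracting several brackets in non-decreasing order, and with the convention that contracting a minimal bracket $\mu$ removes $\mu$ itself while shrinking the larger brackets. I would prove it by induction on $|E|$, contracting one minimal bracket at a time and using the cover relation (ii). Cover relation (i), removing a non-minimal or minimal polygon while keeping $T$, is the trivial case.
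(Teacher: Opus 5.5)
Your proof is correct, but it takes a different route from the paper's.

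\textbf{The paper's route.} The paper argues only with cover relations. A cover $M \lessdot M'$ of Matryoshkas either deletes one non-minimal polygon, which is deletion (i) of a bracket. Or it merges the minimal polygons subdividing a next-to-minimal polygon $P$, which is contraction (ii) of the minimal bracket $\bracket(P)$.

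\textbf{Your route.} You check directly that $M \subseteq M'$ is equivalent to the global relation: $T_M = T_{M'}/E$ with $E$ a downset of $\tree(B_{M'})$, and $B_M \subseteq B_{M'}/E$. You do this through a dictionary that translates downsets and iterated contractions into polygons.

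\textbf{Comparison.} The paper's argument is shorter. However, it tacitly uses that the global order on $\BrTree_{n-1}$ is the transitive closure of the local covers (i) and (ii), and the paper never proves this. Your argument never needs the local description. The cost is the bookkeeping for $B_{M'}/E$.

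\textbf{The bookkeeping is easier than you fear.} The downset generated by an edge $e$ is exactly $\bracket(e)$. Hence a downset is precisely a union of brackets of $B_{M'}$, and its connected components are its maximal brackets. Contracting bottom-up simply deletes the brackets contained in $E$ and replaces every other bracket $\beta$ by $\beta - E$. So your dictionary follows directly, with no induction on $|E|$.

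\textbf{A small slip.} In your last paragraph, relation (i) removes only a non-minimal polygon (possibly one whose bracket is minimal). Removing a minimal polygon never keeps $T$ fixed.
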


\begin{proof}
In a cover relation $M \lessdot M'$ of Matryoshkas, $M$ is obtained from $M'$ by either: \\
\noindent (i) removing a non-minimal polygon $P$, which corresponds to leaving $T_M = T_{M'}$ fixed and making $B_M = B_{M'} - \bracket(P)$, or \\z
\noindent (ii) removing the minimal polygons $P_1, \ldots, P_k$ that subdivided a next-to-minimal polygon $P$ in $M'$, so $P$ becomes minimal in $M$; this corresponds to contracting the minimal $\bracket(P)$ to get $(T_M, B_M) = (T_{M'}/\bracket(P), B_{M'}/\bracket(P))$. 

This proves the poset isomorphism. 
\end{proof}

\noindent
\section{\textsf{Proof of correctness of the cosmohedral fan}\label{sec:prooffan} }

\subsection{\textsf{Positive bracket associahedral fans}}
The \emph{positive bracket associahedral fan} of a graph $H$ is the intersection
\[
\BrAssocFan_{\geq 0}(H) = \BrAssocFan(H) \cap \R_{\geq 0}^{E(H)}
\]
 of the bracket associahedral fan with the positive orthant, with the inherited polyhedral structure.

\begin{figure}[h]
    \centering
    \includegraphics[width=4in]{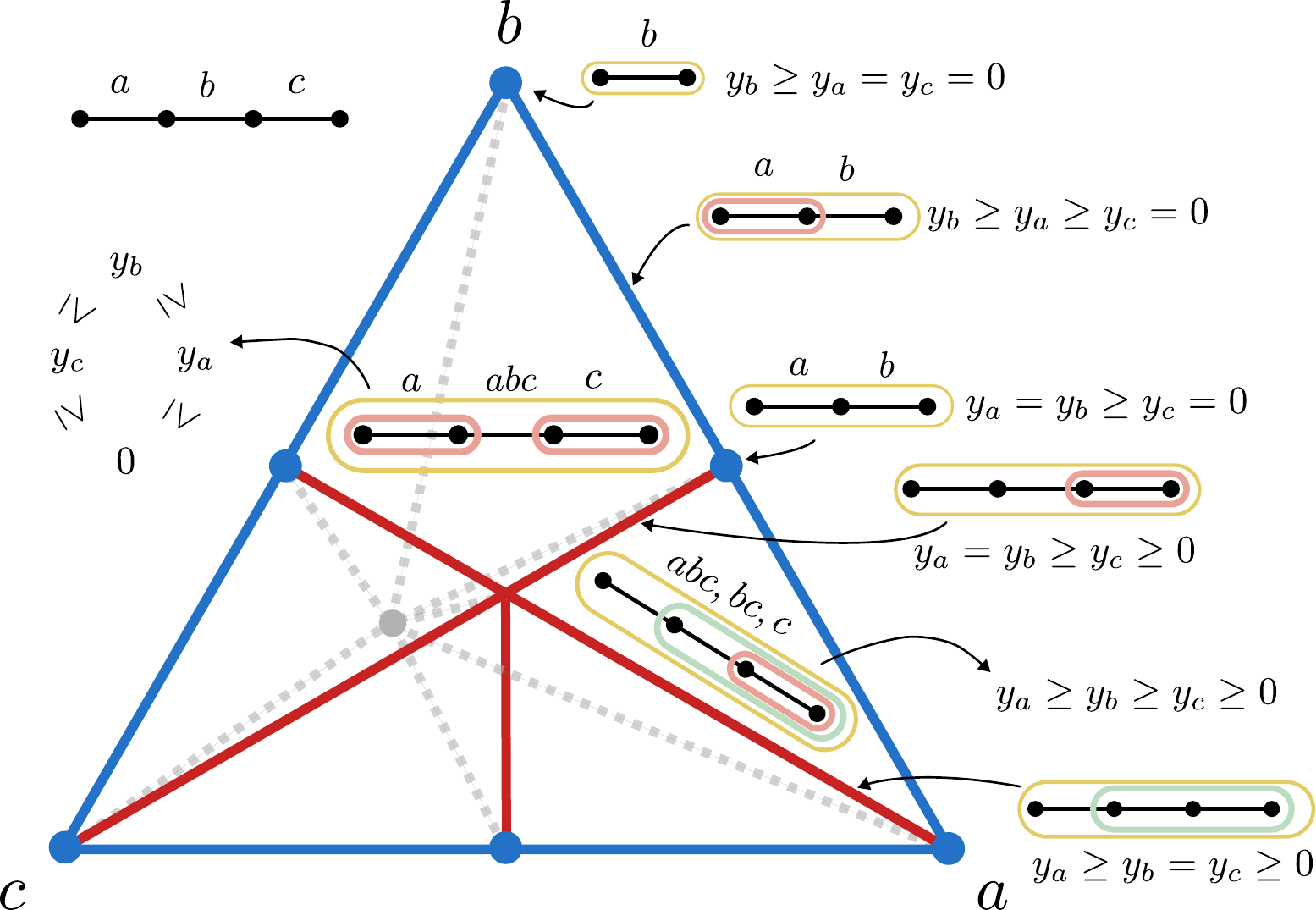}
    \caption{The positive bracket associahedral fan of a path.}
    \label{fig:posbrassocfan}
\end{figure}

\begin{example}
    Figure \ref{fig:posbrassocfan} illustrates the positive bracket associahedral fan of a path of length 4 with edges $e,f,g$; it is the intersection of Figure \ref{fig:brassocfan} -- which we should recall is three-dimensional -- with the positive orthant. The maximal cones are in bijection with those of Figure \ref{fig:brassocfan}, but there are new lower-dimensional cones, corresponding to the bracketings $B$ of the contractions $G$ of the path. We label a few of the faces with a picture of the \emph{pre-bracketing} $(G,B)$, and with the list $B$ of brackets, which is enough to recover $G$. We now prove this description holds in general.
\end{example}

A \emph{pre-bracketing} is a bracketing of a contraction of $H$; that is, a pair $(G,B)$ consisting of a contraction $G$ of $H$ and a bracketing $B$ of $G$.
When $H$ is fixed, we simply write $B$ instead of $(G,B)$, since $G$ is determined automatically as the maximum bracket in $B$.

The \emph{poset of pre-bracketings} $\PreBr(H)$ is again defined by deletion and contraction:
$B \lessdot B'$ if $B$ is obtained from $B'$ by either (i) removing a bracket or (ii) contracting a minimal bracket.

We define the \emph{positive cone of a pre-bracketing} $B$ of $H$ to be
\[
\cone_H(B)_{\geq 0} = 
\{y \in \R^{E(H)} \, : \, y_e \geq y_f \geq 0 \text{ for each } e \geq_B f\, , \,\, \, y_e=0 \text{ for each } e \text{ contracted}\}.
\]

\begin{proposition}   \label{prop:BrAssocFan}
For any graph $H$, the map $B \mapsto \cone_H(B)_{\geq 0}$ is an order-preserving bijection between the poset $\PreBr(H)$ of pre-bracketings of $H$ and the face lattice \newline $L(\BrAssocFan_{\geq 0}(H))$ of the positive bracket associahedral fan of $H$.
\end{proposition}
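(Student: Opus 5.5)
The plan is to intersect the complete fan of Theorem \ref{thm:assocfan} with the positive orthant and describe every face that appears. The positive orthant $\R^{E(H)}_{\geq 0}$ is itself the normal fan's face structure of a simplex corner. Its faces are the coordinate cones $\{y \geq 0,\ y_e = 0 \text{ for } e \in Z\}$. Intersecting a complete fan with this cone gives a polyhedral fan whose cones are the nonempty intersections $\cone_H(B) \cap F_Z$, where $B$ ranges over bracketings and $F_Z$ over faces of the orthant. So the first step is to compute these intersections and show that each one equals $\cone_H(B')_{\geq 0}$ for a unique pre-bracketing $B'$.

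\textbf{Step 1 (the cones are of the stated form).} Take a bracketing $B$ of $H$ and $y \in \cone_H(B)$ with $y \geq 0$. Let $Z=\{e : y_e = 0\}$. Since $y_e \geq y_f$ whenever $e \geq_B f$, and $y \geq 0$, the set $Z$ together with its relevant brackets is closed downward in $\tree(B)$. I will show that the relative interiors of the cells of the restricted structure are indexed as follows. Choose a downset $D$ of the tree pre-poset of $B$ and contract the brackets of $D$ in non-decreasing order, producing $(H/D, B/D)$. Then take any sub-bracketing $B'' \subseteq B/D$ that still contains $H/D$. The resulting cone is $\cone_H(B'')_{\geq 0}$. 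The key check is that the closed cone $\cone_H(B)_{\geq0}\cap\{y_e=0,\ e\in D\}$ coincides with $\cone_H(B/D)_{\geq 0}$. This holds because setting to zero all edges of a downset $D$ collapses exactly those inequalities, and the inequalities of $\tree(B/D)$ are the induced ones. Hence every pre-bracketing yields a cone of $\BrAssocFan_{\geq0}(H)$, and every cone arises this way.

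\textbf{Step 2 (injectivity and dimension).} Given $\cone_H(B)_{\geq 0}$, I recover the pre-bracketing $(G,B)$ as follows. First, $G$ is determined by the set of coordinates that vanish identically on the cone, namely the contracted edges. Second, $B$ is determined by which equalities $y_e=y_f$ hold identically, together with the linear order among the remaining coordinates in the relative interior. For this I mimic Lemma \ref{lem:dim}: in the relative interior, $y_e=y_f$ exactly when $\bracket(e)=\bracket(f)$, so the brackets are recovered as the level sets organized by $\tree(B)$. The count gives $\dim \cone_H(B)_{\geq0} = |B|$, with $|B|$ computed in the contracted graph. In particular, distinct pre-bracketings give cones with distinct relative interiors, and these relative interiors partition the positive orthant because the fan $\BrAssocFan(H)$ is complete.

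\textbf{Step 3 (order).} I must show that $\cone_H(B)_{\geq0}$ is a face of $\cone_H(B')_{\geq0}$ if and only if $B \leq B'$ in $\PreBr(H)$. It suffices to treat cover relations, so I check the two kinds separately. Removing a bracket $\beta$ turns one inequality $y_e \geq y_f$ into an equality, where $e$ is an edge of $\beta$ not in a smaller bracket and $f$ is the analogous edge of the parent bracket; this is the same facet computation as in Theorem \ref{thm:assocfan}. Contracting a minimal bracket $\mu$ imposes $y_e = 0$ on the edges of $\mu$, which is a facet of the positive cone because the edges of $\mu$ are minimal in $\tree(B')$. Conversely, each facet of $\cone_H(B')_{\geq0}$ comes from tightening a single covering inequality of the rooted tree poset $\tree(B')$, or the inequality $y_f\geq 0$ at a minimal element. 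These correspond exactly to the two kinds of covers. The main obstacle is bookkeeping in Step 1: verifying that successive contractions give a well-defined bracketing independent of the contraction order, and that contracted edges from different brackets do not merge brackets illegitimately. I would handle this by induction on $|D|$, contracting one minimal bracket at a time, with the base case being the single-contraction cover computed above.
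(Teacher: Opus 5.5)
Your proposal is correct and follows essentially the paper's own argument. You intersect each cone with the orthant faces, use that vanishing propagates down $\tree(B)$ to get $\cone_H(B)\cap\R^{E}_{\geq 0}=\cone_H(B/\overline{F})_{\geq 0}$, and identify the facets of $\cone_H(B)_{\geq 0}$ as either tightening a cover inequality (deleting a bracket) or tightening $y_f\geq 0$ at a minimal element (contracting a minimal bracket); your Step 2, recovering $G$ and $B$ from the cone and computing $\dim=|B|$, only makes explicit the injectivity the paper leaves implicit (read ``$y_e=y_f$'' there as holding identically on the cone, since incomparable siblings can coincide at individual relative-interior points).
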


\begin{proof}
A face of the positive bracket associahedral fan is obtained by intersecting a face $\cone_H(B)$ of $\BrAssocFan(H)$ for a bracketing $B$ with a face $\R_{\geq 0}^{E}$ of $\R_{\geq 0}^{E(H)}$ for a subset $E \subseteq E(H)$. 
Imposing the equations $y_f = 0$ for $f \notin E$ on $\cone_H(B)$ also implies $y_{f'} = 0$ for any $f' <_B f$; so if we let $\overline{F}$ be the downset of (the tree poset of) $B$ generated by $F = E(H)-E$, then 
\begin{equation} \label{eq:brackettoprebracket}
\cone_H(B) \cap \R_{\geq 0}^{E} = \cone_H(B/\overline{F})_{\geq 0}.
\end{equation}
Every pre-bracketing of $H$ arises in this way, proving the description of the cones of $\BrAssoc_{\geq 0}(H)$. 

Now, for a pre-bracketing $B$ of $H$, a facet of $\cone_H(B)_{\geq 0}$ is obtained by either (i) setting $y_e=y_f$ for $e \gtrdot f$ in $\tree(B)$,  which has the effect of deleting the bracket of $f$, or (ii) setting $y_f=0$ for a minimal $f$ in $\tree(B)$, which has the effect of contracting the  bracket of $f$. 
The poset isomorphism follows.
\end{proof}

\subsection{\textsf{The cosmohedral fan and positive bracket associahedral fans}}

Now we describe the cosmohedral fan in terms of positive bracket associahedral fans.

\begin{lemma}\label{lem:f_T}
Let $T$ be a plane rooted tree with $n+1$ leaves, whose internal vertices are canonically labelled by $[n]$. Label each edge in the order $ij$ where $i$ is closer to the root than $j$. 
We have inverse maps:

\[
\begin{aligned}
f_T : \R^{E(T)} &\overset{\cong}\longrightarrow \R^{n}/\R\1
&\qquad
f_T^{-1} : \qquad \qquad \R^{n}/\R\1 &\overset{\cong}\longrightarrow \R^{E(T)} \\
e_{ij} &\longmapsto \sum_{k\in (T-ij)_i} e_k + \R
&\qquad
(x_v)_{v\in V(T)}+\R\1 &\longmapsto (x_i-x_j)_{ij\in E(T)} .
\end{aligned}
\]
where $(T-ij)_i$ and $(T-ij)_j$ are the connected components of the forest $T-ij$ containing $i$ and $j$ respectively. Furthermore, 

\begin{enumerate}

\item 
the map $f_T :  \R^{E(T)} \overset{\cong}\longrightarrow \R^{n}/\R\1$ is an isomorphism of vector spaces,

\item
the restriction $f_T :  \R_{\geq 0}^{E(T)} \overset{\cong}\longrightarrow \cone(T)$ is an isomorphism of cones, and
\item
the restriction $f_T: \cone_T(B)_{\geq 0} \overset{\cong}\longrightarrow \cone(T,B)$ is an isomorphism of cones for any bracketing $B$ of tree $T$.
\item
the restriction $f_T: \cone_T(B)_{\geq 0} \overset{\cong}\longrightarrow \cone(T(B),B)$ is an isomorphism of cones for any pre-bracketing $B$ of tree $T$, where $T(B) \leq T$ is the tree supporting $B$.
\end{enumerate}
\end{lemma}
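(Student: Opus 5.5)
We first check that the two displayed maps are mutually inverse. Since $T$ has $n$ internal vertices and hence $n-1$ edges, both spaces have dimension $n-1$, so it is enough to verify that $f_T^{-1}\circ f_T$ is the identity on basis vectors. Take the edge $ij$, with $i$ the parent. Its image $f_T(e_{ij})$ is the indicator vector of the component $(T-ij)_i$, taken modulo $\R\1$. Applying $f_T^{-1}$ to this indicator, the coordinate at an edge $kl$ is $x_k-x_l$. This difference equals $1$ exactly when $k\in(T-ij)_i$ and $l\notin(T-ij)_i$, which happens only for $kl=ij$. For every other edge both endpoints lie in the same component, so the difference is $0$. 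Since $f_T^{-1}$ is well defined modulo $\R\1$, this proves (1).

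For (2), note that $f_T^{-1}$ sends $x$ to the vector of parent-minus-child differences along the edges of $T$. The condition $x_i\ge x_j$ for every tree edge $i\gtrdot j$ generates all the inequalities defining $\cone(T)$, because the defining relations are the transitive closure of the Hasse diagram. Hence $x\in\cone(T)$ exactly when $f_T^{-1}(x)\in\R^{E(T)}_{\ge 0}$. One caution: if some vertex carries several labels, so that $V(T)$ is indexed by blocks of $[n]$, the same argument applies with one coordinate per vertex. In that case we identify $\R^n/\R\1$ with the subspace on which labels of the same vertex agree, exactly as in the proof of Lemma \ref{lem:dim}.

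Parts (3) and (4) are a matter of translating inequalities through the change of coordinates $y_{ij}=x_i-x_j$. The cone $\cone(T,B)$ is cut out by $x_i-x_j\ge x_k-x_l\ge 0$ whenever $\bracket_B(ij)\supseteq\bracket_B(kl)$. In the $y$-coordinates these read $y_{ij}\ge y_{kl}\ge 0$ for $ij\ge_B kl$, which is precisely the definition of $\cone_T(B)_{\ge0}$ for a bracketing, where nothing is contracted. So the bijective linear map $f_T$ carries one cone onto the other, giving (3).

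Part (4) is the step requiring the most care. For a pre-bracketing $B$ supported on the contraction $T(B)=T/E$, the positive cone additionally imposes $y_e=0$ for every $e\in E$. Under $f_T$ these equations say $x_i=x_j$ for each contracted edge $ij$, meaning the labels of merged vertices of $T/E$ coincide. This is exactly the ambient condition for $\cone(T(B))$ and $\cone(T(B),B)$: the vertices of $T/E$ are the merged classes, and their edges are the uncontracted edges of $T$ with the same label pairs. The remaining inequalities match as in (3). One point to confirm is that the label pair $ij$ of an edge of $T/E$ can be taken to be its label in $T$; any alternative representative differs only by contracted edges, on which the $x$-values agree. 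The conditions therefore match, and $f_T$ restricts to an isomorphism $\cone_T(B)_{\ge0}\cong\cone(T(B),B)$.
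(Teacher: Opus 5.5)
Your proposal is correct and follows essentially the same route as the paper. Both arguments check $f_T^{-1}\circ f_T$ on basis vectors, use the dimension count $n-1$ to get part 1, and then translate each family of (in)equalities through $y_{ij}=x_i-x_j$; in part 4 the contracted edges correspond to equal labels on merged vertices of $T(B)$. Your remark that an edge of $T/E$ may be written with any representative label pair is a small clarification that the paper leaves implicit.
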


\begin{proof}
First, we check that these maps are indeed inverse to each other. For each basis vector $e_{ij}$ corresponding to edge $ij$ of $T$, the vector $f_T(e_{ij})$ has coordinates equal to $1$ on component $(T - ij)_i$ and $0$ on component $(T - ij)_j$. Therefore $f_T^{-1}(f_T(e_{ij}))$ is only non-zero on the edge $ij$ -- whose vertices lie in different components -- where it equals 1. Thus $f_T^{-1}(f_T(e_{ij}))=e_{ij}$ indeed. Since both spaces are $(n-1)$-dimensional, this means $f_T$ is  an isomorphism and $f_T^{-1}$ is its inverse, proving 1. 

To show 2, notice that $y=f_T^{-1}(x)$ is nonnegative if and only if $x$ satisfies the inequalities of $\cone(T)$. 
For 3., observe that $y$ satisfies the additional inequalities $y_{ij} \geq y_{kl} \geq 0$ of $\cone_T(B)_{\geq 0}$ if and only if $x$ satisfies the inequalities $x_i - x_j \geq x_k - x_l \geq 0$ of $\cone(T,B)$ for $ij \geq_B kl$. 
For 4. notice that $y$ satisfies $y_{ij}=0$ for each contracted edge $ij$  if and only if $x$ satisfies $x_i=x_j$ when $i$ and $j$ label the same vertex in $T(B)$; these are precisely the equalities that hold in $\cone(T(B),B)$, as we explained in the proof of Lemma \ref{lem:dim}. 
\end{proof}

\begin{example}
Consider the Matryoshka $M_1$ and its corresponding tree $T_1$ with edges $e=21, f=32, g=34$, and bracketing $B_1=\{e,efg, g\}$, as shown in Figure \ref{fig:twoMatryoshkas}. Then
\[
\cone_{T_1}(B_1)_{\geq 0} = \{y \in \R^{efg} \, : \, y_f \geq \{y_e, y_g\} \geq 0\} \subset \R^{E(T)},
\]
and its image under the map $f_{T_1}$ is
\[
\cone(T_1,B_1) = \{x \in \R^{4}/\R\1 \, : \, x_3-x_2 \geq \{x_2-x_1, x_3-x_4\} \geq 0\} \subset \R^n/\R\1
\]
as illustrated in Figure \ref{fig:faninfan}.
\end{example}

\begin{figure}[h]
    \centering
    \includegraphics[width=0.6\linewidth]{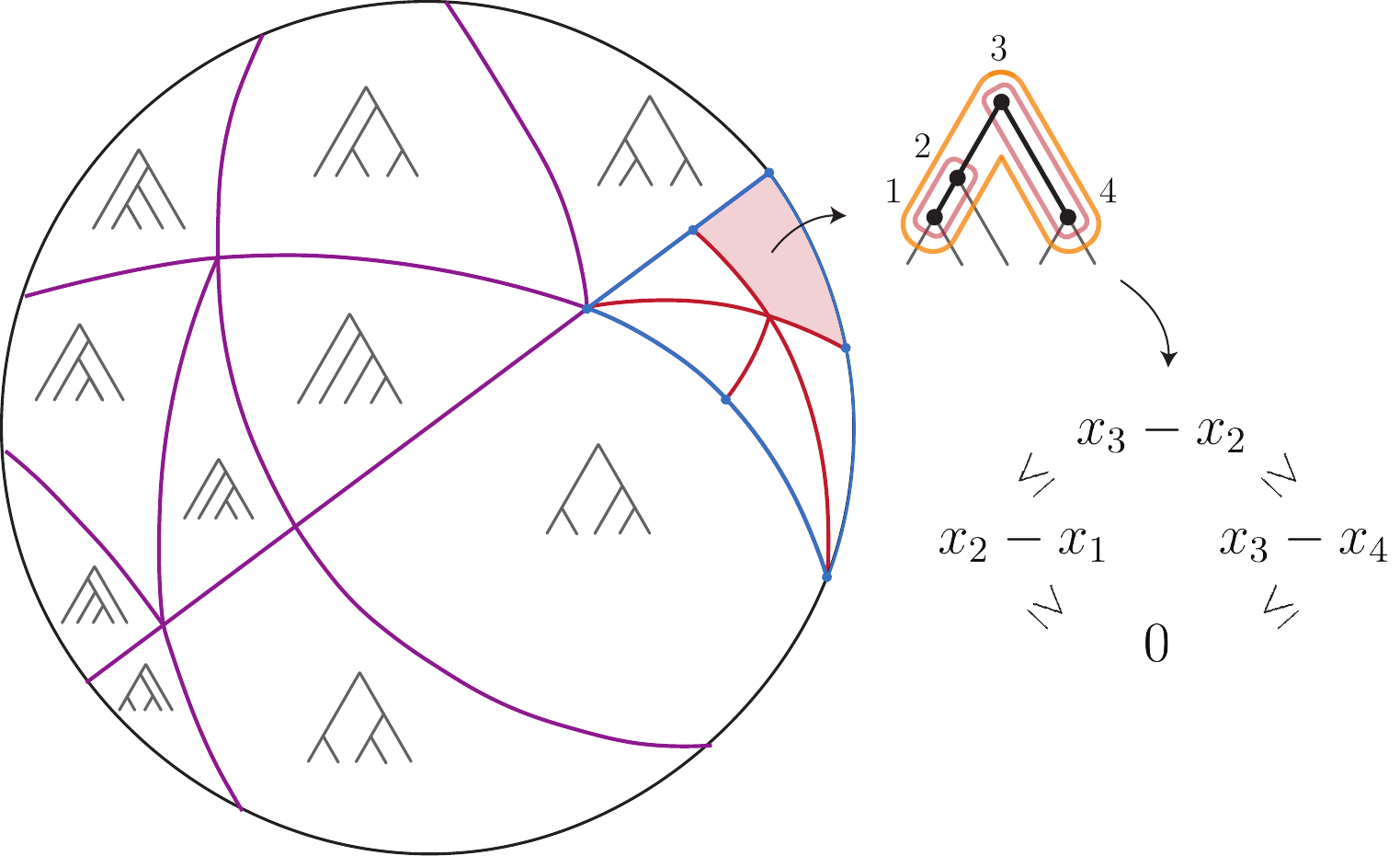}
    \caption{The linear map $f_{T_1}$ embeds the four-sided $\cone_{T_1}(B_1)_{\geq 0}$ of $\BrAssocFan_{\geq0}(T_1)$ in Figure \ref{fig:posbrassocfan} into the $\cone(T_1)$ of $\Assoc_{n-1}$ in Figure \ref{fig:assocfan}. The result is the cone $f_{T_1}(\cone_{T_1}(B_1)_{\geq 0}) = \cone(T_1,B_1)$ of $\Cosmo_{n-1}$.}
    \label{fig:faninfan}
\end{figure}

For the moment, we only know $\CosmoFan_{n-1}$ to be the set of Matryoshkal cones. Our next goal is to show that it is indeed a polyhedral fan, with the correct combinatorial structure. We begin with a simple technical lemma.

\begin{corollary}
We have
\[
\CosmoFan_{n-1} = \bigcup_{T} f_T(\BrAssocFan_{\geq 0}(T))
\]
where the union is over all binary trees $T$ with $n+1$ leaves.
\end{corollary}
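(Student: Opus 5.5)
We prove the two inclusions separately, treating the statement as an equality of sets of cones. Here $f_T(\BrAssocFan_{\geq 0}(T))$ means the set of cones $f_T(\cone_T(B)_{\geq 0})$, where $B$ ranges over the pre-bracketings of $T$.

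For the inclusion $\supseteq$, fix a binary tree $T$ with $n+1$ leaves and a pre-bracketing $B$ of $T$. By Proposition \ref{prop:BrAssocFan}, every cone of $\BrAssocFan_{\geq 0}(T)$ has the form $\cone_T(B)_{\geq 0}$. By part 4 of Lemma \ref{lem:f_T}, its image is $f_T(\cone_T(B)_{\geq 0}) = \cone(T(B),B)$. Here $T(B)$ is a contraction of $T$, hence a plane tree with $n+1$ leaves, and $B$ is a bracketing of it. By the Corollary describing $\CosmoFan_{n-1}$ via bracketed trees, this cone lies in $\CosmoFan_{n-1}$.

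For the inclusion $\subseteq$, take any bracketed tree $(T',B')$ with $n+1$ leaves. We must find a binary tree $T$ and a pre-bracketing $B$ of $T$ with $T(B)=T'$ and bracketing $B'$. The plan is as follows.
\begin{enumerate}
\item Resolve each internal vertex $v$ of $T'$ of degree greater than $2$ into a small binary plane tree $R_v$ that respects the left-to-right order of children. Call the result $T$. Equivalently, refine the subdivision dual to $T'$ to a triangulation.
\item The edges of $T$ are then the edges of $T'$ together with the new edges inside the $R_v$. Contracting all new edges gives $T/E = T'$, where $E$ is the set of new edges.
\item Regard $B'$, a bracketing of $T'=T/E$, as a pre-bracketing of $T$: the pair $(T/E, B')$.
\end{enumerate}
Applying part 4 of Lemma \ref{lem:f_T} again gives $f_T(\cone_T(B')_{\geq 0}) = \cone(T',B')$, so every cosmohedral cone appears in the union.

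The only point needing care is the vertex labels in part 4 of Lemma \ref{lem:f_T}. A vertex of $T'$ carries several gap labels, which must coincide with the labels of the vertices of $R_v$ that merge into it. This holds because contracting edges of a plane tree does not change which gaps are visible from the merged node: the gap labelling is compatible with contraction. Granting that, both inclusions follow, since every cone in either collection has been matched with a cone in the other.
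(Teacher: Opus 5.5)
Your proof is correct and is essentially the paper's argument: for the inclusion $\subseteq$ you refine the plane tree to a binary tree $T^+\geq T'$, regard $B'$ as a pre-bracketing of $T^+$, and invoke Lemma~\ref{lem:f_T}.4, exactly as the paper does. The paper leaves the reverse inclusion implicit, and you derive it from Proposition~\ref{prop:BrAssocFan} and Lemma~\ref{lem:f_T}.4, so spelling it out, together with your check that gap labels merge correctly under contraction, is a harmless and welcome addition.
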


\begin{proof}
Any cone of $\CosmoFan_{n-1}$ is of the form $\cone(M) = \cone(T,B)$ for a bracketed tree $(T,B)$. If we choose any binary $T^+ \geq T$, then $B$ is a pre-bracketing of $T^+$, and Lemma \ref{lem:f_T}.4 shows that this cone is in $f_{T^+}(\BrAssocFan_{\geq 0}(T^+))$.
\end{proof}

Notice in particular that the union above is not disjoint.

\subsection{\textsf{The cosmohedral fan is a complete fan}}

\begin{theorem}  \label{thm:cosmofan3} 
The cosmohedral fan $\CosmoFan_{n-1}$ is a complete fan in $\R^n/\R\1$.
The map $M \mapsto \cone(M)$ is an order-preserving bijection between the Matryoshkas of an $(n+2)$-gon and the faces of $\CosmoFan_{n-1}$.
\end{theorem}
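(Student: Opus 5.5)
The plan is to build $\CosmoFan_{n-1}$ as a refinement of the associahedral fan $\AssocFan_{n-1}$. Inside each maximal cone $\cone(T)$ of the associahedral fan we place a copy of a fan that is already known to be complete, and then check that neighbouring copies glue correctly.

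\textbf{Completeness and local fan structure.} Fix a binary tree $T$ with $n+1$ leaves. By Theorem \ref{thm:assocfan}, $\BrAssocFan(T)$ is a complete fan in $\R^{E(T)}$. Hence its intersection $\BrAssocFan_{\geq 0}(T)$ with the orthant is a fan whose support is $\R^{E(T)}_{\geq 0}$, and Proposition \ref{prop:BrAssocFan} identifies its faces with the pre-bracketings of $T$. By Lemma \ref{lem:f_T}, the linear isomorphism $f_T$ maps $\R^{E(T)}_{\geq 0}$ onto $\cone(T)$, and maps each $\cone_T(B)_{\geq 0}$ onto $\cone(T(B),B)$. So $f_T(\BrAssocFan_{\geq 0}(T))$ is a fan with support $\cone(T)$, and its cones are Matryoshkal cones. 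The cones $\cone(T)$ cover $\R^n/\R\1$ by Theorem \ref{thm:assocfan2}. Together with the Corollary expressing $\CosmoFan_{n-1}$ as the union of the $f_T(\BrAssocFan_{\geq 0}(T))$, this shows the support is everything, and that each restriction $\CosmoFan_{n-1}|_{\cone(T)}$ is a fan.

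\textbf{Gluing.} Next we must show that the pieces for different $T$ intersect properly. Two maximal cones $\cone(T)$ and $\cone(T')$ meet in the common face $\cone(T_0)$, where $T_0$ is the coarsest common contraction and $\cone(T_0)$ is a face of both. A face of a fan that is a union of its cones is again subdivided by that fan: the cones of $f_T(\BrAssocFan_{\geq 0}(T))$ lying in $\cone(T_0)$ are exactly the $\cone(T(B),B)$ with $T(B)\leq T_0$. These are the cones of bracketed trees $(S,B)$ with $S\leq T_0$, and that description does not depend on $T$. The key check is that for a pre-bracketing $B$ of $T$ supported on $S\leq T_0$, the cone $\cone(S,B)$ depends only on $(S,B)$ and not on the ambient tree $T$. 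This holds because $\cone(T,B)$ is defined intrinsically by inequalities indexed by edges of $S$, as noted in Lemma \ref{lem:f_T}.4. Hence both $T$ and $T'$ induce the same subdivision of $\cone(T_0)$. It then follows that every intersection $\cone(M)\cap\cone(M')$ is a common face. To see this, take $M$ within $\cone(T)$ and $M'$ within $\cone(T')$. Then
\[
\cone(M)\cap\cone(M') = \bigl(\cone(M)\cap\cone(T_0)\bigr)\cap\bigl(\cone(M')\cap\cone(T_0)\bigr).
\]
Each factor is a face of its own cone and a cone of the common subdivision of $\cone(T_0)$, so the intersection is a face of both. The main obstacle is making this step rigorous, namely the identification of the restriction to a face $\cone(T_0)$. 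I would prove it using equation \eqref{eq:brackettoprebracket}. Intersecting with $\cone(T_0)$ amounts to setting $y_f=0$ on the edges of $T$ contracted to obtain $T_0$. Equation \eqref{eq:brackettoprebracket} says this intersection is $\cone_T(B/\overline{F})_{\geq 0}$, which $f_T$ sends to a cone of a bracketed tree on a contraction of $T_0$.

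\textbf{Bijection and order.} Injectivity of $M\mapsto\cone(M)$ follows from Lemma \ref{lem:dim} and the above. Each cone lies in the relative interior of $\cone(T_M)$, and within it corresponds to a unique pre-bracketing by Proposition \ref{prop:BrAssocFan}. The pair $(T_M,B_M)$ is then recovered via Lemma \ref{lem:bijection}. Surjectivity onto faces was shown above. For order preservation, the faces of $\cone(M)$ are computed inside a single $f_T(\BrAssocFan_{\geq 0}(T))$. By Proposition \ref{prop:BrAssocFan}, these face relations correspond to deletion and contraction of pre-bracketings, which is the cover relation of $\BrTree_{n-1}$. By Proposition \ref{prop:BrTree}, that is the containment order on Matryoshkas.
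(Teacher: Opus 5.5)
Your proposal is correct and follows essentially the same route as the paper. Both realize $\CosmoFan_{n-1}$ as the union of the images $f_T(\BrAssocFan_{\geq 0}(T))$ over maximal trees. Both reduce $\cone(M)\cap\cone(M')$ to the common associahedral face via \eqref{eq:brackettoprebracket}, and compute there inside a single $\PreBr(T^+)$, using that $\cone(S,B)$ does not depend on the ambient tree. Both read off the order from Propositions \ref{prop:BrAssocFan} and \ref{prop:BrTree}.

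Two small wording fixes. First, $\cone(T)\cap\cone(T')=\cone(T\wedge T')$ is given by the meet, which is the finest common contraction, not the coarsest. Second, it is the relative interior of $\cone(M)$, not the whole cone, that lies in the relative interior of $\cone(T_M)$.
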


\begin{proof}
To prove that the cones intersect properly, let us first analyze how a Matryoshkal $\cone(T_1,B_1)$, which sits in an associahedral $\cone(T_1)$ of the same dimension, intersects a smaller associahedral $\cone(T) \subseteq \cone(T_1)$. Consider any maximal $T^+ \geq T_1$. Then $B_1$ is a pre-bracketing of $T^+$ and we have
 \begin{eqnarray*}
\cone(T_1,B_1) \cap \cone(T) 
&=& f_{T^+}(\cone_{T^+}(B_1)_{\geq 0}) \cap f_{T^+}(\R_{\geq 0}^{E(T)})  \\
&=& f_{T^+}(\cone_{T^+}(B_1)_{\geq 0} \cap \R_{\geq 0}^{E(T)}) \\
&=& f_{T^+}(\cone_{T^+}(B_1/\overline{F})_{\geq 0}) \quad  \text{ for } F=E(T^+)-E(T), \text{ by } \eqref{eq:brackettoprebracket}\\
&=& \cone(T'_1, B'_1) \quad \text{ for } B_1' = B_1/\overline{F}
\end{eqnarray*}
where $T'_1 = T^+/\overline{F} = T/(\overline{F}-F) \leq T$ is the tree supporting $B_1'$. 

For example, in the top row of Figure \ref{fig:intersection} we compute 
$\cone(M_1) \cap \cone(T)$
for the Matryoshka $M_1=(T_1,B_1)$ of Figure \ref{fig:twoMatryoshkas} and the tree $T$ of the shown subdivision. Here $T^+=T_1$, $F=E(T^+)-E(T) = \{21\}$, and the downset in $\tau_1$ is $\overline{F} = F = \{21\}$. Thus we obtain $(T_1', B_1')$ by contracting edge 21 from $(T_1,B_1)$. The resulting inequalities are $x_3-x_{21} \geq x_3-x_4 \geq 0$, where $x_{21}$ denotes the common value of $x_2=x_1$. In the second row we compute $\cone(M_2) \cap \cone(T)$ similarly.

\begin{figure}[h]
    \centering
    \includegraphics[width=5.5in]{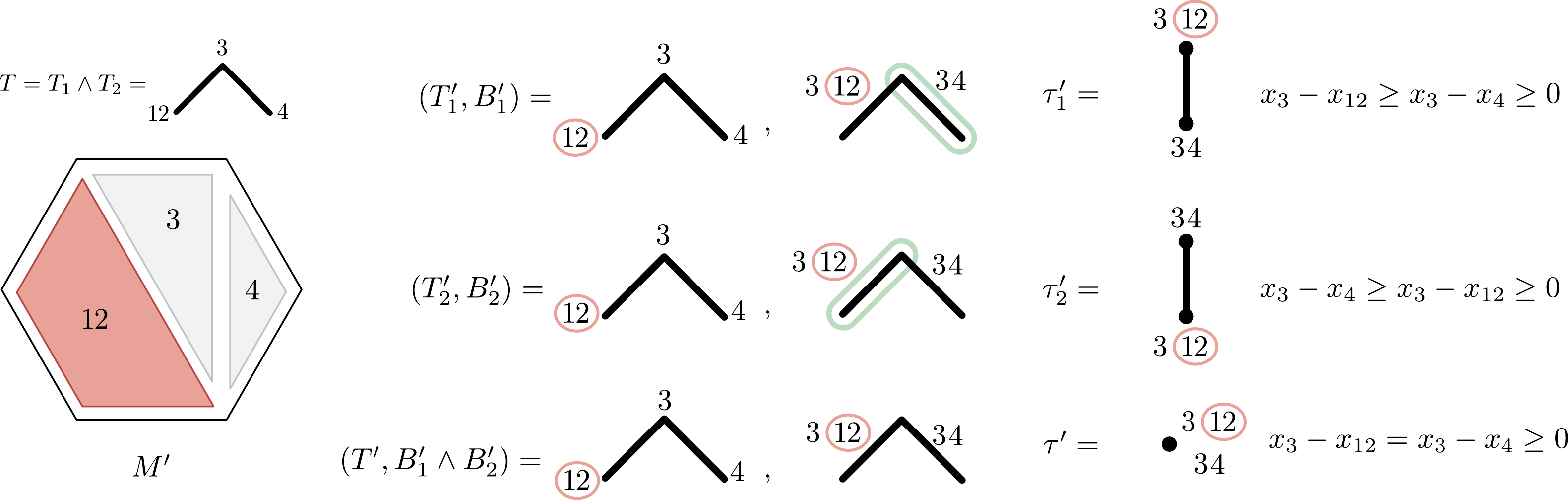}
    \caption{The intersection of the two Matryoshkal cones $M_1$ and $M_2$ described in Figure \ref{fig:twoMatryoshkas}. We begin by computing $T=T_1 \wedge T_2$ and use it to find $T_1', T_2'$ and the inherited bracketings $B_1'$ and $B_2'$. From there we find $B'=B_1' \wedge B_2'$. Then $M'$ is the Matryoshka of $T'$ and $B'$.}
    \label{fig:intersection}
\end{figure}

Now consider two Matryoshkas $M_1=(T_1,B_1)$ and $M_2=(T_2,B_2)$. Their cones intersect inside $\cone(T_1) \cap \cone(T_2) = \cone(T)$ where $T = T_1 \wedge T_2$ in $\Tree_{n-1}$. 
Then
\begin{eqnarray*}
\cone(M_1) \cap \cone(M_2) 
&=&  (\cone(T_1,B_1) \cap \cone(T)) \cap (\cone(T_2,B_2) \cap \cone(T)) \\
&=&  \cone(T'_1,B'_1) \cap \cone(T'_2,B'_2) \text{ where } T_1', T_2' \leq T \\
&=& f_{T^+}(\cone_{T^+}(B'_1)_{\geq 0} \cap \cone_{T^+}(B'_2)_{\geq 0}) \, \, \, \text{ for any maximal } T^+ \geq T \\
&=& f_{T^+}(\cone_{T^+}(B')_{\geq 0}) \quad \text{ for $B' = B'_1 \wedge B'_2$, by Proposition \ref{prop:BrAssocFan}}\\
&=& \cone(M') \text { for the Matryoshka $M'$ of } (T',B')
\end{eqnarray*}
where $T' \leq T^+$ is the tree supporting $B' = B'_1 \wedge B'_2$.

In the third row of Figure \ref{fig:intersection} we compute $\cone(M_1) \cap \cone(M_2)$ for the Matryoshkas of Figure \ref{fig:twoMatryoshkas}, noticing that the tree we had considered is $T=T_1 \wedge T_2$. 
Considering $B_1'$ and $B_2'$ as pre-bracketings of any maximal tree $T^+ \geq T$ we compute their meet $B'=B_1' \wedge B_2'$ and obtain from it the tree $T'$ and the inequalities of the intersection, $x_3-x_{12} = x_3-x_4 \geq 0$; that is, the ray $x_3 \geq x_{124}$ in $\R^4/\R$.

To prove that the fan is complete, we begin with the fact that the maximal tree cones $\cone(T)$ of the associahedral fan cover $\R^n/\R\1$. 
Now, for each $T$, the  cones $\cone_T(B)_{\geq 0}$ of the pre-bracketing fan cover the positive orthant $\R^{E(T)}_{\geq 0}$, so their images $f_T(\cone_T(B)_{\geq 0}) = \cone(T(B),B)$ cover 
$f_T(\R^{E(T)}_{\geq 0}) = \cone(T)$.

Finally, let us prove the poset isomorphism, keeping in mind that we already showed that $\Mat_{n+2} \cong \BrTree_{n+1}$. Clearly $M_1 \subseteq M_2$ implies $\cone(M_1) \subseteq \cone(M_2)$.
Assume that $\cone(M_1) = \cone(T_1,B_1) \subseteq  \cone(T_2,B_2) = \cone(M_2)$. Let $T$ be a maximal tree such that $\cone(T)$ contains these cones; $B_1$ and $B_2$ are pre-bracketings of $T$. Then
$f_T(\cone_T(B_1)_{\geq 0}) = \cone(T_1,B_1) = \cone(T_2,B_2) \subseteq f_T(\cone_T(B_2)_{\geq 0})$ by Lemma \ref{lem:f_T} so 
$\cone_T(B_1)_{\geq 0} \subseteq \cone_T(B_2)_{\geq 0}$ which means that $B_1 \leq B_2$ in $\PreBr(T)$ by Proposition \ref{prop:BrAssocFan}. Therefore $(T_1,B_1) \subseteq (T_2,B_2)$ in $\BrTree_{n-1}$, which means that $M_1 \leq M_2$ in $\Mat_{n-1}$ by Proposition \ref{prop:BrTree}.
\end{proof}

\section{\textsf{The cosmohedron}\label{sec:cosmohedron}}

In this section, we define the cosmohedron $\Cosmo_{n-1}(\a,\b)$ and prove that its normal fan is the cosmohedral fan $\CosmoFan_{n-1}$.

To set conventions, let $e_1, \ldots, e_n$ be the standard basis of $\R^n$. 
Let us write $i<<j$, when integers $i<j$ are not consecutive, that is, $|j-i|>1$. 
We note that the set of pairs $(i,j) $ with $0 \leq i << j \leq n+1$ is in bijection with the set $\diag^+(\pentagon_{n+2}) = \diag(\pentagon_{n+2}) \cup \{e\}$ of diagonals augmented with the base edge $e$.

\subsection{\textsf{Preliminaries: Associahedra and bracket associahedra}}

\noindent
\textbf{\textsf{Loday's associahedron.}} 
Given a sequence of positive numbers $\a = (a_{ij})_{0 \leq i << j \leq n+1}$, the \emph{Loday associahedron} is
\begin{align}
\Loday_{n-1}(\a) &= \sum_{0 \leq i << j \leq n+1} a_{ij} \Delta_{(i,j)} \\
&= \{x \in \R^n 
:  \sum_{1 \leq i \leq n} x_i = \sum_{0 \leq i << j \leq n+1} a_{ij}, \nonumber \\
& 
\qquad \qquad \quad\sum_{r \leq i \leq s} x_i \geq \sum_{r-1 \leq i << j \leq s+1} a_{ij} \text{ for } 1 \leq r \leq s \leq n. \label{eq:Loday}
\}
\end{align}
where $\Delta_{(i,j)} = \Delta_{[i+1, j-1]} = \conv(e_{i+1}, \ldots, e_{j-1})$ is a face of the standard simplex in $\R^n$, and $P+Q = \{p+q \, : \, p \in P, q \in Q\}$ denotes Minkowski sum of polytopes.\footnote{It is more customary to choose $\mathbf{\alpha} = (\alpha_{ij})_{1 \leq i \leq j \leq n}$ and write $\sum_{i \leq j} \alpha_{ij} \Delta_{[i,j]}$ for Loday's associahedron; we use $a_{i-1,j+1} = \alpha_{ij}$ instead. }

\begin{theorem} \cite{AguiarArdila, Postnikov}
The normal fan of the associahedron $\Loday_{n-1}(\a)$ is the associahedral fan $\AssocFan_{n-1}$.
\end{theorem}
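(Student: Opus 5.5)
The plan is to use the theory of Minkowski sums and normal fans. The normal fan of a Minkowski sum $P+Q$ is the common refinement of the normal fans of $P$ and $Q$. Since all $a_{ij}>0$, the normal fan of $\Loday_{n-1}(\a)$ is the common refinement of the normal fans of the simplices $\Delta_{[i+1,j-1]}$, and it does not depend on the particular positive values $a_{ij}$. We work in $\R^n/\R\1$, the dual of the hyperplane $\sum x_i = \text{const}$. Here a linear functional $w$ is maximized on $\Delta_{[r,s]}$ at the face $\conv(e_k : k\in[r,s],\ w_k = \max_{[r,s]} w)$. So two generic functionals $w,w'$ lie in the same maximal cone of the refinement exactly when, for every interval $[r,s]\subseteq[n]$, the position of the maximum of $w$ on $[r,s]$ agrees with that of $w'$.

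The key step is to identify these classes with the maximal cones $\cone(T)$ for binary trees $T$. To a generic $w$ we attach its Cartesian (decreasing) tree $T_w$: the root is the index $k$ with $w_k$ maximal on $[1,n]$, and we recurse on $[1,k-1]$ and $[k+1,n]$. We then check two things. First, the argmax data on all intervals determines $T_w$ and is determined by it: the maximum of $w$ on $[r,s]$ is attained at the lowest common ancestor of $r$ and $s$ in $T_w$. Second, $\{w : T_w = T\}$ is exactly the interior of $\cone(T)$, whose inequalities $x_{\text{parent}}\ge x_{\text{child}}$ are precisely the Hasse diagram conditions of the decreasing tree. For this we must confirm that the paper's gap labelling of internal nodes coincides with the in-order labelling of the Cartesian tree. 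It does, because the gap visible from an internal node lies between its left and right subtrees.

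Lower-dimensional faces are handled the same way. A point $w$ in the relative interior of a face of the refined fan is characterized by the sets of maximizers on each interval, which may now have several elements. Contracting the tree edges $ij$ with $w_i=w_j$ produces a plane tree $T$ with $w\in\operatorname{relint}\cone(T)$. Since both sides are complete fans whose cones are polyhedral and are unions of relatively open classes, equality of the maximal cones already forces equality of the fans. Completeness of $\AssocFan_{n-1}$ and the poset statement are Theorem \ref{thm:assocfan2}, which we take as given.

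The main obstacle is the bookkeeping step. We must make sure the indexing convention $\Delta_{(i,j)}=\Delta_{[i+1,j-1]}$ together with the canonical gap labels yields exactly $\cone(T)$ with the correct orientation. We want $x_i\ge x_j$ for $i$ above $j$, corresponding to maximization; if the opposite convention (minimization, or inner normals) is intended, every inequality reverses sign. We would settle this first on small cases $n=2,3$. As an alternative, we could cite the general result of \cite{Postnikov} that for a Minkowski sum of simplices $\sum_{I\in\mathcal B}\Delta_I$ over a building set $\mathcal B$ (here, all intervals) the normal fan is the nested set fan. Nested sets of intervals are exactly plane trees, with cones given by the same parent–child inequalities.
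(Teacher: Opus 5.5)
Your argument is correct. The paper gives no proof of its own and simply cites \cite{AguiarArdila, Postnikov}; your route is the standard argument from those sources, namely Postnikov's nested-set fan specialized to the building set of intervals: take the normal fan of the Minkowski sum as the common refinement of the normal fans of the simplices $\Delta_{[i+1,j-1]}$, and encode the argmax data on intervals by the decreasing (Cartesian) tree. Your orientation worry resolves in your favor, because the paper uses the maximization convention $\N_P(v)=\{w : w(v)\geq w(p)\}$, and this matches the vertex formula $(a_T)_v=\sum_{i\vee j=v}a_{i-1,j+1}$ in the proof of Lemma \ref{lem:vertassoc}.
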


Let $\S$ be a triangulation of $\pentagon_{n+2}$ corresponding to the binary tree $T$. It will be useful to express the vertex $a_T$ of Loday's associahedron in terms of $\S$. To do so, say that diagonal $ij$ \emph{transversally crosses} the triangle labeled $v$ with vertices $u<v<w$ if it intersects edges $uv$ and $vw$, not at $v$. This is equivalent to $u \leq i < v < j \leq w$. When this is the case we write $ij \rightarrow v$.
We invite the reader to check that each diagonal of the polygon crosses exactly one triangle of the triangulation $\S$ transversally.

\begin{lemma} \cite{Loday, Postnikov}  \label{lem:vertassoc}
The vertex $a_\S$ of $\Loday_{n-1}(\a)$ corresponding to a triangulation $\S$ of $\pentagon_{n+2}$ has coordinates
\[
(a_\S)_v = \sum_{ij \rightarrow v \text{ in } \S} a_{ij}
\]
summing over the extended diagonals $ij \in \diag^+(\pentagon_{n+2})$ that cross triangle $v$ transversally.
\end{lemma}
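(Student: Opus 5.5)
Since $\Loday_{n-1}(\a)=\sum a_{ij}\Delta_{(i,j)}$ is a Minkowski sum, I will use the fact that the face of a Minkowski sum maximizing (or minimizing) a generic linear functional $\ell$ is the sum of the corresponding faces of the summands. So I take a functional $\ell$ in the interior of the normal cone $\cone(T)$ of the vertex $a_\S$. For each summand $\Delta_{[i+1,j-1]}$, the face selected by $\ell$ is a single vertex $e_v$, where $v$ is the coordinate in $[i+1,j-1]$ at which $\ell$ is extremal. Then $a_\S=\sum a_{ij} e_{v(i,j)}$, and $(a_\S)_v$ is the sum of $a_{ij}$ over those $(i,j)$ with $v(i,j)=v$. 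It then remains to show that $v(i,j)=v$ exactly when $ij\rightarrow v$ in $\S$.

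First I fix the sign convention. In $\cone(T)$ we have $x_i\ge x_j$ when $i$ is above $j$ in the tree, so the root has the largest coordinate. For the vertex description I will use the convention under which the simplex face picks the coordinate $v\in[i+1,j-1]$ whose triangle is closest to the root, which is the top element of the interval in the tree poset. If the paper's outer or inner normal convention turns out to be the opposite, the same argument runs with minimization instead. The key combinatorial claim is then: for $0\le i<<j\le n+1$, the labels $\{i+1,\dots,j-1\}$ are the labels of the triangles lying on the side of the diagonal $ij$ away from edge $e$ (for $ij=e$, this is all triangles). Among these, the unique one nearest to $e$ in the dual tree is the triangle that $ij$ crosses transversally, if $ij\notin\S$, or the triangle directly beneath $ij$, if $ij\in\S$.

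To prove the claim, I use the labeling rule: triangle $v$ is the first triangle met on the straight path from vertex $v$ toward $e$. If $i<v<j$, that path starts below the chord $ij$, so the triangle labelled $v$ has its vertex $v$ strictly between $i$ and $j$. The triangle crossed transversally, with vertices $u<v<w$ and $u\le i<v<j\le w$, is the one whose edges $uv$ and $vw$ straddle $ij$. Its label is $v$ because its middle vertex is $v$, and every other triangle with a middle label in $(i,j)$ lies deeper, further from $e$, inside the region cut off by $uv$ or $vw$. Uniqueness and existence follow from the fact stated just before the lemma, that each diagonal crosses exactly one triangle transversally; I will justify it by walking from $e$ down the dual tree. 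When $ij$ is itself a diagonal of $\S$, I will interpret it as crossing the triangle just below it with $u=i$, $w=j$, which fits the inequality $u\le i<v<j\le w$.

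The hardest step is the claim that the label of the transversally crossed triangle is the maximum of $\ell$ on $[i+1,j-1]$. Concretely, I must show that every label in $(i,j)$ other than $v$ belongs to a triangle that is a descendant of triangle $v$ in $T$. I will first try an induction on $j-i$: removing triangle $v$ splits the region below $ij$ into the regions below $iv$ and $vj$ (or $uv$ and $vw$, intersected with the interval), whose labels are exactly $(i,v)$ and $(v,j)$ and lie in subtrees hanging from $v$. Since $\ell$ is generic in the interior of $\cone(T)$, these descendants have strictly smaller coordinates, so the face is exactly $e_v$.
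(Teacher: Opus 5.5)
Your proposal is correct and is essentially the paper's argument made self-contained. The paper cites the formula $(a_T)_v=\sum_{i\vee j=v}a_{i-1,j+1}$, i.e.\ that the Minkowski summand $\Delta_{[i,j]}$ contributes $e_{i\vee j}$ with $i\vee j$ the topmost element of $[i,j]$ in $T$; your normal-cone argument derives exactly this, with the correct maximization convention. The paper then converts lowest common ancestors into transversal crossings via the triangulation--tree bijection, and your descendant argument carries this step out explicitly.

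There is one wording slip. For $ij\notin\S$ there is no well-defined set of triangles ``on the side of $ij$ away from $e$'', since several triangles straddle $ij$. Your proof only uses the middle vertices strictly between $i$ and $j$, so nothing breaks.
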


\begin{proof}
The vertex $a_T$ of $\Loday_{n-1}(\a)$ corresponding to a binary tree $T$ has coordinates
\[
(a_T)_v = \sum_{i \vee j = v \text{ in } T} a_{i-1,j+1} \qquad \text{ for } v \in [n]
\]
where $i \vee j$ is the lowest common ancestor of $i$ and $j$; 
this is implicit in  \cite{Loday, Postnikov} see for example \cite[Figure 6]{AguiarArdila}. 
The second statement then follows readily from the bijection. 
\end{proof}

\bigskip

\noindent
\textbf{\textsf{Devadoss's bracket associahedron}}.
For a graph $H$, let $\b: \Brackets(H) \rightarrow \Z$ be any function that is \emph{concave} in the sense that
$
b(\beta_1) + b(\beta_2) > b(\beta_1 \cap \beta_2) + b(\beta_1 \cup \beta_2)
$
for any intersecting brackets $\beta_1$ and $\beta_2$. For example, $b(\beta)$ can be any concave function of the number of edges $|\beta|=k$; that is, $b(k) > \frac12(b(k-1)+b(k+1))$.

\begin{definition}
The \emph{Devadoss bracket associahedron} of a graph $H$ is
\begin{align}
\BrAssoc_H(\b) = \{y \in \R^{E(H)} 
& :  
\sum_{e \in E(H)} y_e = b(H), \nonumber \\
& 
\,\,\,\,\,\,\, \sum_{e \in \beta} y_e \leq b(\beta) \text{ for all brackets $\beta$ of }H \label{eq:BrAssoc}
\}.
\end{align}
\end{definition}

\begin{theorem} \cite{Devadoss, PPP}\label{thm:brassoc}
The normal fan of the bracket associahedron $\BrAssoc_H{\b}$ is the bracket associahedral fan of $H$. 
\end{theorem}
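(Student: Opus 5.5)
We prove Theorem \ref{thm:brassoc} by realizing $\BrAssoc_H(\b)$ as a graph associahedron in the sense of \cite{CarrDevadoss, Postnikov}, via the line graph. The bracketings of $H$ correspond to the tubings of $L(H)$. Under the coordinate identification $\R^{E(H)} = \R^{V(L(H))}$, a bracket $\beta$ of $H$ (a connected edge set) is exactly a tube of $L(H)$ (a connected vertex set of the line graph). Two brackets are nested or disjoint in the bracketing sense exactly when the corresponding tubes are nested, or disjoint and non-adjacent.

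Under this identification, \eqref{eq:BrAssoc} is the standard inequality description of a graph associahedron (equivalently, a nestohedron for the building set of connected subsets of $L(H)$). One has a single equation for the full set and one inequality per tube, with right-hand side a function $b$ on tubes. We will invoke the general criterion, in the form of \cite{Postnikov} or \cite{PPP}: for a building set $\mathcal{B}$, the polytope $\{y : \sum_{E} y = b(E),\ \sum_{\beta} y \leq b(\beta)\}$ has normal fan equal to the nested-set fan whenever $b$ satisfies strict submodularity-type inequalities. The required condition is $b(\beta_1)+b(\beta_2) > b(\beta_1\cup\beta_2)+b(\beta_1\cap\beta_2)$ for intersecting tubes, together with the analogous condition $b(\beta_1)+b(\beta_2) > b(\beta_1\cup\beta_2)$ for disjoint but adjacent tubes, reading $b(\emptyset)=0$. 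Our signs use $\leq b(\beta)$ together with the concavity hypothesis, which is the mirror image of the usual supermodular convention. We will first apply $y\mapsto -y$, or equivalently pass to $b(H)-b(H\setminus\beta)$-type complements, to match conventions. Then the normal cone at the vertex indexed by a maximal bracketing $B$ is $\{y : y_e \geq y_f \text{ for } e\geq_B f\}$, namely $\cone_H(B)$.

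The concrete steps are as follows. First, we will check directly that the nested-set fan of $L(H)$ is $\BrAssocFan(H)$; this is Theorem \ref{thm:assocfan}. Second, for a maximal bracketing $B$, we will solve the $|E(H)|$ linear equations $\sum_{e\in\beta}y_e=b(\beta)$ for $\beta\in B$. These are triangular with respect to $\tree(B)$, so the solution $y^B$ is unique. Third, we will verify that $y^B$ satisfies all other inequalities $\sum_{e\in\gamma}y_e\leq b(\gamma)$ strictly. This follows by induction on $\gamma$, decomposing $\gamma$ relative to the brackets of $B$ and applying the concavity inequality repeatedly. Fourth, we will check that the cone of linear functionals maximized at $y^B$ is $\cone_H(B)$, which follows by LP duality since the tight inequalities are exactly the brackets of $B$.

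The main obstacle is the third step. The hypothesis as stated only gives concavity for intersecting brackets, and we must make sure it covers the case of two disjoint brackets whose union is connected. In $H$ such brackets share a vertex but no edge, so they intersect as subgraphs. We will interpret "intersecting" in that sense, so that $\beta_1\cap\beta_2$ is edge-empty with $b=0$. With this interpretation, the general nestohedron realization theorem of \cite{PPP} applies verbatim, and no new combinatorics is needed.
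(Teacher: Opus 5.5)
Your proposal is correct and is essentially the paper's argument. Like the paper, you pass to tubings of the line graph $L(H)$ and invoke Padrol--Pilaud--Poullot's type-cone description of graph associahedral fans, with concavity of $\b$ supplying the required wall-crossing inequalities; your direct vertex-by-vertex verification is an optional supplement, and the sign bookkeeping you defer amounts only to the choice of inner versus outer normal fan. Your insistence that ``intersecting'' include brackets sharing only a vertex, with $b(\emptyset)=0$, is exactly what the paper's claimed equivalence between those inequalities and concavity needs for the two-edge brackets $\beta=\{e,e'\}$ (where $\beta-e-e'=\emptyset$), and it matches the paper's later convention for edge-less intersections.
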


We remark that Devadoss \cite{Devadoss} works with the sequence $b(\beta) = - 3^{|\beta|-2}$ -- see Lemma \ref{lemma:bracketfactorization} --  but in fact Theorem \ref{thm:brassoc} holds precisely for concave sequences. To see this, we use Padrol, Pilaud, and Poullot's description \cite[Corollary 2.12]{PPP} of the deformation cone of graph associahedral fans, and in particular bracket associahedral fans. This cone is given by the equality $b(G) = 0$ and inequalities $b(\beta - e) + b(\beta - e') < b(\beta) + b(\beta - e - e')$ for any bracket $\beta$ and any non-disconnecting edges $e,e'$ of $\beta$. This is equivalent to the concavity condition above.\footnote{Padrol, Pilaud, and Poullot require $b(G) = 0$, but we can remove this condition by a suitable translation. For general graphs, some care is required when $\beta-e-e'$ is disconnected, but this does not happen for trees.}

\begin{lemma} \label{lem:vertbrassoc}
Let $\b: \Brackets(H) \rightarrow \R$ be concave. 
The vertex $b_B$ of $\BrAssoc_H(\b)$ associated to a maximal bracketing $B$ of $H$ is given by 
\begin{equation} \label{eq:Devadoss}
(b_{B})_e = b(\bracket(e)) - \sum_{f \lessdot e} b(\bracket(f)) \qquad \text{ for } e \in E(H),
\end{equation}
summing over the edges $f$ that $e$ covers in $\tree(B)$. 
\end{lemma}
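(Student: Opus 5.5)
The plan is to use the standard fact that a vertex of a polytope given by inequalities is the unique point where the inequalities indexed by the rays of its normal cone are tight. By Theorem \ref{thm:brassoc}, the normal fan of $\BrAssoc_H(\b)$ is $\BrAssocFan(H)$, and a maximal bracketing $B$ indexes a maximal cone $\cone_H(B)$. One should be careful with signs here. The polytope is cut out by $\sum_{e\in\beta} y_e \le b(\beta)$, so its outer normals are the indicator vectors $\1_\beta$. The cone $\cone_H(B)$ consists of vectors that are larger on outer edges, and it is generated, modulo the lineality line $\R\1$, by the vectors $\1_\beta$ with $\beta\in B$. Indeed, a vector with $y_e\ge y_f$ whenever $e\ge_B f$ is a nonnegative combination of the $\1_\beta$: the tree poset $\tree(B)$ has exactly one element per bracket, namely its top edge $e$ with $\bracket(e)=\beta$, and $\1_\beta$ is the indicator of the downset below that element. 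Under the sign convention matching the $\le$ inequalities, the vertex $b_B$ is therefore the unique point satisfying $\sum_{e\in\beta} y_e = b(\beta)$ for every $\beta\in B$. This includes $\beta=H$, which gives the defining equation.

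Next I solve this linear system and check that it has the claimed solution. Because $B$ is maximal, every bracket $\beta\in B$ equals $\bracket(e)$ for exactly one edge $e$. Each non-top edge of $\beta$ lies in a unique maximal proper sub-bracket of $\beta$ belonging to $B$. These maximal sub-brackets are exactly the $\bracket(f)$ for $f\lessdot e$ in $\tree(B)$, and they are pairwise disjoint. This gives the partition
\[
\beta=\{e\}\sqcup\bigsqcup_{f\lessdot e}\bracket(f).
\]
The system then reads $y_e+\sum_{f\lessdot e} b(\bracket(f)) = b(\bracket(e))$, which is exactly \eqref{eq:Devadoss}. Equivalently, one can verify directly by induction up the tree $\tree(B)$ that the proposed $b_B$ satisfies each equation, since the sums telescope over the partition. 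The system is triangular, so its solution is unique.

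The remaining point, and the main subtlety, is to make sure that this solution is really a vertex of the polytope and not merely the intersection point of the hyperplanes. This is where the concavity of $\b$ enters. I will invoke Theorem \ref{thm:brassoc}, which already uses concavity through the Padrol--Pilaud--Poullot deformation cone and guarantees that the normal fan is exactly $\BrAssocFan(H)$. Since $\cone_H(B)$ is full-dimensional modulo lineality, the face of $\BrAssoc_H(\b)$ maximizing any functional in its interior is a vertex. That vertex lies on all facets whose normals are the rays $\1_\beta$, $\beta\in B$, and is therefore the unique solution computed above. I would double-check the combinatorial claim that the brackets of $B$ properly contained in $\bracket(e)$ and maximal among them are exactly the $\bracket(f)$ with $f\lessdot e$. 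This follows from the nested-or-disjoint property together with maximality of $B$, which forces $|B|=|E(H)|$ and so a bijection $e\mapsto\bracket(e)$.
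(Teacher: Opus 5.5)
Your argument is correct and is essentially the paper's: the paper likewise takes $b_B$ to be the solution of $\sum_{e\in\beta}y_e=b(\beta)$ for $\beta\in B$ and inverts this triangular system by M\"obius inversion on $\tree(B)\cup\widehat{0}$, whose intervals are chains; that inversion is exactly your partition $\bracket(e)=\{e\}\sqcup\bigsqcup_{f\lessdot e}\bracket(f)$. One harmless slip: $\1_\beta$ is the indicator of a downset of $\tree(B)$ and so is larger on inner edges, which means $\cone_H(B)$ is generated modulo $\R\1$ by the vectors $-\1_\beta$ (the inner normals), not by $\1_\beta$; this only affects which sign convention for normal cones matches Theorem \ref{thm:brassoc}, and your identification of the tight inequalities at $b_B$ is unaffected.
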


In a maximal bracketing, each bracket contains one or two maximal sub-brackets, so the expression above has at most three terms.

\begin{proof}
For each maximal bracketing $B$, we can compute the entries of vertex $b_{B}$  recursively by $(b_{B})_e = b(e)$ for any edge $e$ that is a bracket, and $\sum_{e \in \beta} (b_{B})_e = b(\beta)$ for every bracket $\beta$ of $B$. In the tree of $B$, this says
$b(\bracket(e)) = \sum_{f \leq e} (b_{B})_e$.
Adding a minimum element $\widehat{0}$ with $b(\widehat{0}) = b_{B}(\widehat{0})=0$, and applying M\"obius inversion in the poset $\tree(B) \cup \widehat{0}$, we get
$
(b_{B})_e = \sum_{f \leq e} \mu(f,e) \, b(\bracket(f))
$
Since the interval $[f,e]$ is a path, $\mu(f,e)$ equals $1$ if $f=e$, it equals $-1$ if $f \lessdot e$, and it is $0$ otherwise; so \eqref{eq:Devadoss} does indeed describe Devadoss's bracket associahedron. 
\end{proof}

Slightly differently, we will work with the \emph{extended set of brackets} $\text{Brackets}^+(H) = \text{Brackets}(H) \cup V(H)$, including also a \emph{small bracket} around each individual vertex. 
Consider any concave function $\b: \text{Brackets}^+(T) \rightarrow \Z$; in the concavity condition, if $\beta_1 \cap \beta_2$ is an edge-less graph on vertices $v_1,\ldots,v_k$, we set $b(\beta_1 \cap \beta_2) = b(v_1) + \cdots + \beta(v_k)$.
The function $\b$ induces a concave function $\b^-: \Brackets(H) \rightarrow \Z$ defined by
\begin{equation}\label{eq:b-}
b^-(\beta) = b(\beta) - \sum_{v \in V(\beta)} b(v),
\end{equation}
whose bracket associahedron we now describe.
A bracketing $B$ of $H$ now gives an \emph{extended} $\tree^+(B)$ that also includes the small brackets.

\begin{lemma} \label{lem:vertbrassoc2}
Let $\b: \Brackets^+(H) \rightarrow \R$ be concave and $\b^-:\Brackets(H) \rightarrow \R$ be given by \eqref{eq:b-}. 
The vertex $b_B$ of $\BrAssoc_H(\b^-)$ associated to a maximal bracketing $B$ of $H$ is given by 
\begin{equation} \label{eq:Devadoss2}
(b_{B})_e = b(\bracket(e)) - \sum_{f \lessdot e} b(\bracket(f)) \qquad \text{ for } e \in E(H).
\end{equation}
summing over the edges $f$ that $e$ covers in the extended $\tree^+(B)$. 
\end{lemma}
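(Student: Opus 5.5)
The plan is to reduce the statement to Lemma \ref{lem:vertbrassoc} applied to the concave function $\b^-$, and then rewrite the resulting formula in terms of $\b$ using the extended tree. I first take for granted that $\b^-$ is concave on $\Brackets(H)$, as the paper asserts. This is a one-line check: for intersecting brackets $\beta_1,\beta_2$, inclusion–exclusion on vertex sets gives
\[
V(\beta_1)+V(\beta_2)=V(\beta_1\cap\beta_2)+V(\beta_1\cup\beta_2)
\]
as multisets. Here we use the convention that when $\beta_1\cap\beta_2$ is edgeless, $b(\beta_1\cap\beta_2)$ is the sum of the $b(v)$ over its vertices. The vertex corrections therefore cancel in the concavity inequality. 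So Lemma \ref{lem:vertbrassoc} applies, and it gives
\[
(b_B)_e=b^-(\bracket(e))-\sum_{f\lessdot e \text{ in } \tree(B)} b^-(\bracket(f)).
\]

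Next I expand $b^-$ using \eqref{eq:b-}:
\[
(b_B)_e=b(\bracket(e))-\sum_{f\lessdot e}b(\bracket(f))-\Bigl(\sum_{v\in V(\bracket(e))}b(v)-\sum_{f\lessdot e}\sum_{v\in V(\bracket(f))}b(v)\Bigr).
\]
The key combinatorial step is to identify the vertices in this bracket sum. The brackets $\bracket(f)$ for $f\lessdot e$ in $\tree(B)$ are the maximal proper sub-brackets of $\beta=\bracket(e)$, and they are pairwise disjoint, sharing no vertices either. The vertices of $\beta$ not covered by any of them are exactly the vertices $v$ whose smallest enclosing bracket is $\beta$. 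In the extended poset $\tree^+(B)$, these are precisely the small brackets $\{v\}$ covered by $e$. Hence the parenthesized term equals $\sum b(v)$ over the small brackets covered by $e$ in $\tree^+(B)$. Combining with the first two terms gives \eqref{eq:Devadoss2}, where the sum runs over all elements covered by $e$ in $\tree^+(B)$, both edges and small brackets.

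The main point requiring care is the precise definition of $\tree^+(B)$. I must confirm that each small bracket $\{v\}$ is placed immediately below the edge whose bracket is the smallest bracket containing $v$, with ties among edges sharing that bracket resolved consistently with the paper's tree structure on $\tree(B)$. Because the maximal bracketing makes brackets correspond bijectively to edges, this is unambiguous. I also need the disjointness of sibling sub-brackets, which follows from the nested-or-disjoint axiom. As a fallback, one can argue directly: the formula in \eqref{eq:Devadoss2} satisfies $\sum_{e\in\beta}(b_B)_e=b(\beta)-\sum_{v\in V(\beta)}b(v)=b^-(\beta)$ by telescoping over $\tree^+(B)$. The tight inequalities then identify the vertex, exactly as in the Möbius inversion proof of Lemma \ref{lem:vertbrassoc}.
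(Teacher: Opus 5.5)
Your proposal is correct and takes the same route as the paper, which proves the lemma in one line by applying Lemma \ref{lem:vertbrassoc} to $\b^-$. You spell out the bookkeeping the paper leaves implicit: that $\b^-$ is concave, and that the vertex corrections coming from \eqref{eq:b-} reduce to $\sum b(v)$ over the small brackets covered by $e$ in $\tree^+(B)$, because the maximal sub-brackets of $\bracket(e)$ are vertex-disjoint and the brackets containing a fixed vertex form a chain.
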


\begin{proof}
    This follows by applying Lemma \ref{lem:vertbrassoc} to $\b^-$.
\end{proof}

\noindent
\subsection{\textsf{The cosmohedron} }

In this section we construct a \emph{cosmohedron}: a polytope $\Cosmo_{n-1}(\a,\b)$ whose normal fan is the cosmohedral fan $\CosmoFan_{n-1}$. 
We first give two vertex descriptions of $\Cosmo_{n-1}(\a,\b)$ and use them to prove that it has the correct normal fan. We then use the normal fan to compute the inequality description of $\Cosmo_{n-1}(\a,\b)$. Finally, we use those inequalities to prove that $\Cosmo_{n-1}(\a,\b)$ is linearly isomorphic to the cosmohedron proposed by Arkani-Hamed, Figueiredo, and Vaz\~ao \cite{AFV}. This gives a proof of the correctness of their construction. 

Recall that $\diag^+(\pentagon_{n+2}) = \diag(\pentagon_{n+2}) \cup e$ is the set of diagonals of the $(n+2)$-gon augmented with the base $e$. 
We say a function $\b = (b(P) \, : \, P \text{ is a subpolygon of } \pentagon_{{n+2}}) \in \R^{\binom{n+2}{\geq 3}}$ is \emph{concave} if $b(P_1) + b(P_2) > b(P_1 \cap P_2) + b(P_1 \cup P_2)$ whenever the intersection and union are also subpolygons. An example is a concave function of the number of vertices of the polygon. 
 
\medskip

Throughout this section, we fix:

$\bullet$ an arbitrary positive vector $\a = (a_{ij} \, : \, ij \in \diag^+(\pentagon_{n+2})) \in \R^{\diag^+(\pentagon_{n+2})} \cong \R^{\binom{n+1}{2}}$,

$\bullet$ a concave vector $\b = (b(P) \, : \, P \text{ is a subpolygon of } \pentagon_{{n+2}}) \in \R^{\binom{n+2}{\geq 3}}$, 

\smallskip

$\bullet$ a small constant $0 < \varepsilon < \frac{m}{24M}$ where $m = \min a_{ij}$ and $M=\max |b(P)|$. 

\noindent The cosmohedron $\Cosmo_{n-1}(\a,\b)$ is constructed from those choices.

\subsubsection{\textsf{Vertex description}}

For a rooted plane tree $T$ let
\begin{eqnarray*}
f^T \, : \,  \R^{E(T)} &\longrightarrow& \R^{V(T)}_0\\
e_{ij} & \longmapsto & e_i-e_j
\end{eqnarray*}
be the isomorphism between $\R^{E(T)}$ and $\R^{V(T)}_0 := \{x \in \R^{V(T)} \, : \, \sum_v x_v=0\}$ dual to the isomorphism $f_T$ of Lemma \ref{lem:f_T}.
Also, our chosen vector $\b \in \R^{\binom{n + 2}{\geq 3}}$ induces a map $\b:\Brackets^+(T) \rightarrow \R$, since the extended brackets of $T$ correspond to the polygons in the triangulation of $M$. In turn this induces a map $\b^-:\Brackets(T) \rightarrow \R$ by \eqref{eq:b-}.

\begin{definition}\label{def:vertmat}
For a maximal Matryoshka $M$ with bracketed tree $(T,B)$ we define
\[
c_M := a_T + \varepsilon \, f^T(b_{B})
\]
where $a_T$ is the vertex of $\Loday_{n-1}(\a)$ corresponding to the plane tree $T$ and $b_B$ is the vertex of $\BrAssoc_T(\b^-)$ corresponding to its bracketing $B$. 
The \emph{cosmohedron} is
\[
\Cosmo_{n-1}(\a, \b):= \conv(c_M \, : \, M \text{ is a maximal Matryoshka on the $(n+2)$-gon}) \subset \R^n. \label{eq:rootcosmo}
\]
\end{definition}

Before we prove the correctness of this construction, let us give an explicit formula for the vertex $c_M$ directly in terms of the Matryoshka $M$.

\begin{proposition} \label{prop:cosmovertex}
Let $M$ be a maximal Matryoshka on $\pentagon_{n+2}$.
For each $v \in [n]$  define
\[
(a_M)_v = \sum_{ij \rightarrow v} a_{ij} = \sum_{u \leq i < v} \sum_{v < j \leq w} a_{ij}
\]
summing over the diagonals $ij$ that cross triangle $v$ (with vertices $uvw$) of the triangulation of $M$ transversally. 

For each polygon of the Matryoshka $M$ that is subdivided into maximal subpolygons $Q$ and $R$ in $M$, let
\[
b_M(P) = b(P) - b(Q) - b(R).
\]
For each $v \in [n]$, where triangle $v$ has edges $u_v$ (opposite to vertex $v$), $l_v$, and $r_v$, define
\[
(b_M)_v = b_M(P(l_v)) + b_M(P(r_v)) - b_M(P(u_v))
\]
where $P(e)$ is the minimal polygon in $M$ containing diagonal $e$; when $e$ is an edge of $\pentagon_{n+2}$, we set $b_M(P(e))=0$. Then  the vertex $c_M$ of a Matryoshka $M$ has coordinates
\[
(c_M)_v = (a_M)_v + \varepsilon (b_M)_v \qquad \text{ for } v \in [n].
\]
\end{proposition}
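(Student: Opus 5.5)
The proof unpacks Definition \ref{def:vertmat}, $c_M = a_T + \varepsilon\, f^T(b_B)$, and computes the two summands separately. The first summand is immediate from Lemma \ref{lem:vertassoc}. The vertex $a_T$ of $\Loday_{n-1}(\a)$ for the binary tree $T = T_M$ dual to the underlying triangulation $\S = M_{\min}$ has $(a_T)_v = \sum_{ij \rightarrow v} a_{ij}$. The condition $ij \rightarrow v$ means $u \le i < v < j \le w$ for the triangle $uvw$ labeled $v$, which rewrites the sum as the stated double sum. So $(a_T)_v = (a_M)_v$.

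For the second summand, $f^T$ sends $e_{ij} \mapsto e_i - e_j$, where $i$ is the parent endpoint of edge $ij$ of $T$. Hence
\[
(f^T(y))_v = \sum_{\text{edges } e \text{ below } v} y_e - y_{\text{edge above } v}.
\]
In the triangulation, the edges of $T$ at node $v$ are the diagonals that are sides of triangle $v$. The side $u_v$ opposite $v$ is the one closer to the base edge $e$, so it joins $v$ to its parent. The sides $l_v, r_v$ join $v$ to its children. Any side that is an edge of $\pentagon_{n+2}$ contributes nothing. So it suffices to show that for each diagonal $d$ of $\S$ (an edge of $T$),
\[
(b_B)_d = b_M(P(d)),
\]
since then $(f^T(b_B))_v = b_M(P(l_v)) + b_M(P(r_v)) - b_M(P(u_v))$, with the convention $b_M(P(e)) = 0$ for polygon edges.

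To prove this identity, apply Lemma \ref{lem:vertbrassoc2}, since $b_B$ is the vertex of $\BrAssoc_T(\b^-)$ and $\b^-$ is induced from $\b$ on $\Brackets^+(T)$ by \eqref{eq:b-}. It gives
\[
(b_B)_d = b(\bracket(d)) - \sum_{f \lessdot d} b(\bracket(f)),
\]
with the sum over the children of $d$ in the extended tree $\tree^+(B)$. Now translate to polygons. The bracket $\bracket(d)$ is dual to the polygon $P(d)$, the minimal polygon of $M$ containing $d$. Because $M$ is maximal, $P(d)$ is subdivided into exactly two maximal subpolygons $Q, R$ of $M$; this is the bijection between diagonals and non-minimal polygons noted in Section \ref{sec:fanM}.

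The main point to check carefully is that the children of $d$ in $\tree^+(B)$ correspond exactly to $Q$ and $R$. If $Q$ is non-minimal, it equals $P(d')$ for a unique diagonal $d'$, and $\bracket(d') = \bracket(Q)$ is a maximal sub-bracket of $\bracket(d)$. If $Q$ is a triangle of $\S$, it corresponds to a small (vertex) bracket of the extended tree, whose $b$-value is $b(Q)$. Checking that the cover relations of $\tree^+(B)$ below $d$ are precisely these two, and that the value $b$ assigned to a small bracket is $b$ of the triangle, gives $(b_B)_d = b(P) - b(Q) - b(R) = b_M(P(d))$.

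Combining the two summands yields $(c_M)_v = (a_M)_v + \varepsilon (b_M)_v$.
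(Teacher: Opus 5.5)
Your proposal is correct and follows the same route as the paper. The paper's proof simply says the formula follows from Lemma \ref{lem:vertassoc}, Lemma \ref{lem:vertbrassoc} (applied to $\b^-$, i.e.\ Lemma \ref{lem:vertbrassoc2}) and Definition \ref{def:vertmat}. You have filled in the bookkeeping that the paper leaves implicit: the node-wise formula for $f^T$, the identification of $u_v$ with the parent edge, and $(b_B)_d = b_M(P(d))$ via the two children of $d$ in $\tree^+(B)$.
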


\begin{proof}
This follows readily from Lemmas \ref{lem:vertassoc} and  \ref{lem:vertbrassoc} and Definition \ref{def:vertmat}.
\end{proof}

\begin{remark}
Our preferred choice of $\a$ and $\b$ will be
\[
a_{ij} = 1 \text{ for all } i,j, \qquad 
b(P) = \begin{cases}
    0 & \text{ if } |P| \leq 4 \\ 
    3^{|P|-5} & \text{ if } |P| \geq 5. \\ 
\end{cases}
\]
This choice simplifies computations: we have
\[
(a_M)_v = |l_v| \cdot |r_v|
\]
where $|f|$ is the number of edges  of $\pentagon_{n+2}$ that are on the side of $f$ opposite to $e$. Also, $b_M(P)=0$ for all triangles and quadrilaterals $P$.
\end{remark}

\begin{example} 
Let us compute the vertex $c_{M_2}$ for the Matryoshka $M_2$ of Figure \ref{fig:Matryoshkas}.
We have $a_{M_2} = (1 \cdot 2, 1\cdot 1, 3 \cdot 2, 1 \cdot 1) = (2,1,6,1)$
The non-zero coordinates of $b_{M_2}$ are $b_{M_2}(\text{hexagon}) = 3-1 -0 = 2$ and $b_{M_2}(\text{pentagon}) = 1-0-0=1$. Therefore $b_{M_2}=(0+0-1, 0+0-0, 1+2-0, 0+0-2) = (-1,0,3,-2)$ and 
\[
c_{M_2} = (2,1,6,1) + \varepsilon(-1,0,3,2).
\]
\end{example}

\subsubsection{\textsf{The cosmohedron: proof of correctness}}

\begin{theorem} \label{thm:cosmohedron}
The normal fan of the cosmohedron $\Cosmo_{n-1}(\a,\b)$ is the cosmohedral fan $\CosmoFan_{n-1}$.
\end{theorem}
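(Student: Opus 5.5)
We prove that the points $c_M$ satisfy the criterion for a polytope to have a prescribed complete fan as its normal fan. Since Theorem \ref{thm:cosmofan3} already shows that $\CosmoFan_{n-1}$ is a complete fan, whose maximal cones $\cone(M)$ are indexed by maximal Matryoshkas, it suffices to prove one statement. For each maximal Matryoshka $M$ and each linear functional $x \in \interior \cone(M)$, the point $c_M$ is the \emph{unique} maximizer of $\langle x, \cdot\rangle$ over $\{c_{M'}\}$. Equivalently, by the standard local criterion (e.g.\ \cite[Lemma 2.10]{ACEP}), we check the wall-crossing inequalities. For every pair of adjacent maximal cones $\cone(M)$, $\cone(M')$ sharing a wall with normal $\ell$ pointing into $\cone(M)$, the difference $c_M - c_{M'}$ must be a positive multiple of $\ell$. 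Completeness of the fan then upgrades this local statement to the global one.

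We split the walls into two types, according to the two kinds of codimension-one faces of a maximal $\cone(T,B)$ identified in Proposition \ref{prop:BrAssocFan} via Lemma \ref{lem:f_T}.

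\textbf{Type (i) walls} are interior to an associahedral cone $\cone(T)$. They come from setting $y_e=y_f$ for $e \gtrdot f$ in $\tree(B)$, so $M$ and $M'$ share the triangulation $T$ and their bracketings $B,B'$ differ by a flip. Then $a_T$ cancels and
\[
c_M-c_{M'} = \varepsilon f^T(b_B - b_{B'}).
\]
Since $f^T$ is dual to $f_T$ and $f_T$ carries $\cone_T(B)_{\geq 0}$ onto $\cone(T,B)$, this wall-crossing condition is exactly the pullback of the corresponding condition for $\BrAssoc_T(\b^-)$. That condition holds by Theorem \ref{thm:brassoc}, using concavity of $\b^-$, which is inherited from the concavity of $\b$ on polygons.

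\textbf{Type (ii) walls} lie on a wall $y_f=0$ of $\cone(T)$, where a minimal bracket (a quadrilateral of $M$) is contracted. Here $M$ and $M'$ have triangulations $T$ and $T'$ related by the flip of the diagonal of that quadrilateral. Moreover, $B$ and $B'$ agree after contracting $f$; that is, the remaining nested polygons coincide. We then compute
\[
c_M - c_{M'} = (a_T - a_{T'}) + \varepsilon\bigl(f^T(b_B) - f^{T'}(b_{B'})\bigr).
\]
By Loday's theorem, $a_T-a_{T'}$ is a positive multiple of the wall normal $\ell = e_v - e_w$ (for the two triangle labels $v,w$ involved), with coefficient at least $m=\min a_{ij}$. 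The key claim is that the $\varepsilon$-term is also parallel to $\ell$. We verify this using Proposition \ref{prop:cosmovertex}. The only triangles whose $(b_M)_u$ coordinates can change are those $u$ adjacent to the flipped quadrilateral. The polygons $P(e)$ for the outer edges of the quadrilateral coincide in $M$ and $M'$, because the quadrilateral is minimal in the bracketing and the outer nested structure agrees. So the changes are confined to coordinates $v,w$, and since both $c_M$ and $c_{M'}$ lie on the hyperplane $\sum x_i = \text{const}$, the difference is a multiple of $e_v-e_w$. Each entry of $b_M$ is a signed sum of at most nine values of $\b$, so its coefficient is bounded in absolute value by a small multiple of $M=\max|b(P)|$. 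The choice $\varepsilon < m/(24M)$ therefore ensures the total coefficient remains positive.

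I expect the main obstacle to be the type (ii) check: specifically, confirming that the $\varepsilon$-perturbation really is parallel to $e_v-e_w$ rather than merely small. This requires a careful case analysis of which polygons $P(l_u), P(r_u), P(u_u)$ change under the flip, including triangles outside the quadrilateral whose edges are the quadrilateral's sides. If exact parallelism fails, the fallback is a perturbation argument. One would show the $\varepsilon$-term is orthogonal to the wall's linear span up to terms that vanish on the wall, and then argue via uniqueness of maximizers directly. The bound on $\varepsilon$ is what makes the Loday term dominate across all such walls.
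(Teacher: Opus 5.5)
Your proposal is correct and is essentially the paper's proof. The paper checks exactly your two kinds of neighbours: a bracket swap inside a fixed triangulation, using that adjacent vertices of Devadoss's bracket associahedron differ by a positive multiple of $e_{ij}-e_{kl}$, and a flip of a minimal quadrilateral, using Loday's edge directions and the bound $\varepsilon<m/(24M)$. The only difference in framing is that, instead of the wall-crossing criterion, it proves only $\N(c_M)\subseteq\cone(M)$ for each maximal $M$ and gets equality because both families of cones tile $\R^n/\R\1$. Your support-plus-zero-sum argument for parallelism in type (ii) is already complete, so no fallback is needed; it is even slightly more robust than the paper's claim that $b_B=b_{B'}$ up to relabeling, since only the coordinates off the quadrilateral must agree, and the $\varepsilon$-coefficient is at most $18M\varepsilon<m$.
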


\begin{proof}
Let $\N_P(v) := \{w \in V^* \, : \, w(v) \geq w(p) \text{ for all } p \in P \}$ denote the normal cone of vertex $v$ in polytope $P \subset V$.
Since the cosmohedron lives on a hyperplane with constant coordinate sum, its dual space is  $\R^n/\R\1$. We need to prove that 
\[
\N_{\Cosmo_{n-1}(\a,\b)}(c_M) = \cone(M)
\]
for all maximal Matryoshkas $M$. Since both the left-hand sides and the right-hand sides subdivide $\R^n/\R\1$ with no interior overlap as $M$ ranges over the maximal Matryoshkas, it suffices to prove the inclusion $\subseteq$ for all $M$.

Let $M$ be a maximal Matryoshka with bracketed tree $(T,B)$, and take $w \in \N_{\Cosmo_{n-1}}(c_M)$. To prove that $w \in \cone(M)$, we proceed in two steps. 

\smallskip

\noindent \textsf{Step 1.} Consider a covering relation $\pentagon_M(ij^\perp) \gtrdot \pentagon_M(kl^\perp)$ in $M$. We need to show that $w$ satisfies the corresponding inequality $w_i-w_j \geq w_k-w_l$ for $\cone(M)$.

Edge $kl^\perp$ creates a subdivision $\pentagon_M(kl^\perp) = P_1 \cup P_2$ and edge $ij^\perp$ creates a subdivision $
\pentagon_M(ij^\perp) = \pentagon_M(kl^\perp) \cup P_3$.
Consider the Matryoshka $M'$ obtained by removing $\pentagon_M(kl^\perp)$ and adding a new polygon around $P_2$ and $P_3$.

\begin{figure}[h]
\begin{center}
\includegraphics[width=\textwidth]{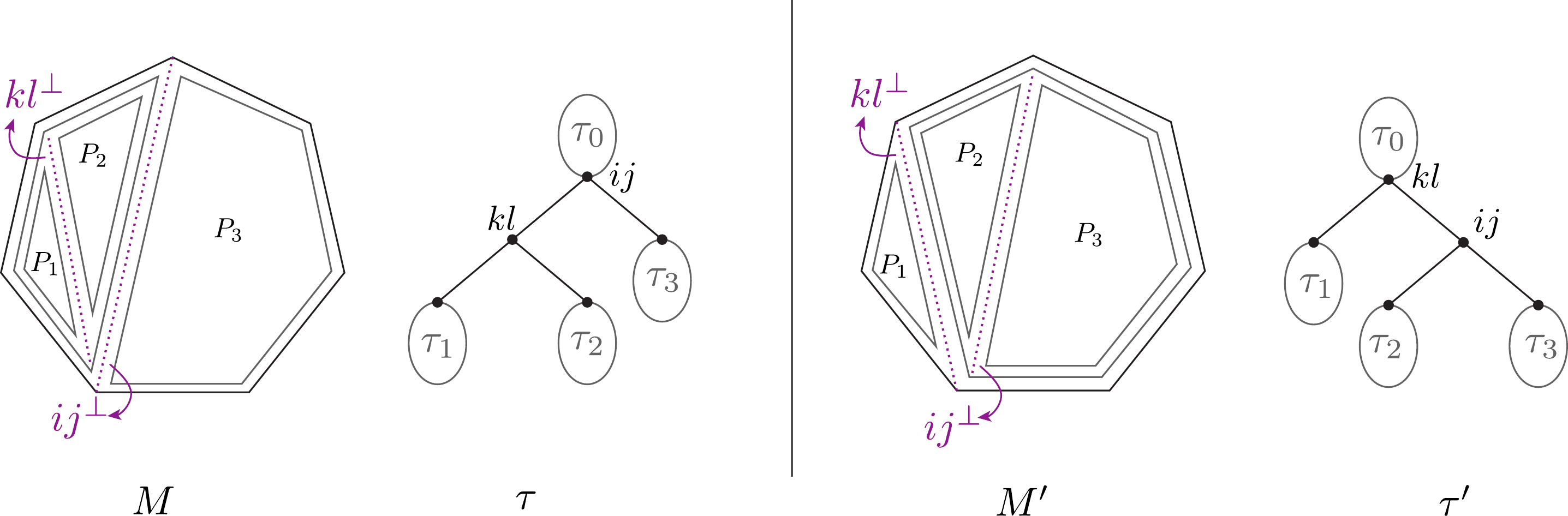}
\end{center}
\caption{A swap of a non-minimal polygon in a Matryoshka, and the corresponding effect on the polygon containment tree $\tau$.  \label{fig:step1}}
\end{figure}

The bracketed tree of $M'$ is $(T,B')$ for the same tree $T$ and a different bracketing. The containment trees $\tau= \tree(B)$ and $\tau' = \tree(B')$ differ by a simple mutation, as shown in Figure \ref{fig:step1}. 
Since Devadoss's bracket associahedron is a generalized permutahedron, its adjacent vertices $b_B$ and $b_{B'}$ satisfy $b_B - b_B' = L(e_{ij} - e_{kl})$ for some $L>0$\cite{AguiarArdila, Postnikov}. Now  $w \in \N_{\Cosmo_{n-1}}(c_M)$ gives
$w(c_M) \geq w(c_{M'})$, so  
\begin{eqnarray*}
w(a_T + \varepsilon \, f^T(b_B)) & \geq & w(a_{T} + \varepsilon \, f^T(b_{B'}))  \\
wf^T(L(e_{ij} - e_{kl})) & \geq & 0   \\
w((e_i-e_j) - (e_k-e_l)) & \geq & 0  
\end{eqnarray*}
as desired.

\smallskip

\noindent \textsf{Step 2.} 
Consider a polygon $\pentagon_M(kl^\perp)$ that does not contain a smaller $\pentagon_M(k'l'^\perp)$; it must be a quadrilateral split into two triangles  $k$ and $l$. Consider the Matryoshka $M'$ obtained by flipping those two triangles in the quadrilateral. 

\begin{figure}[h]
\begin{center}
\includegraphics[width=\textwidth]{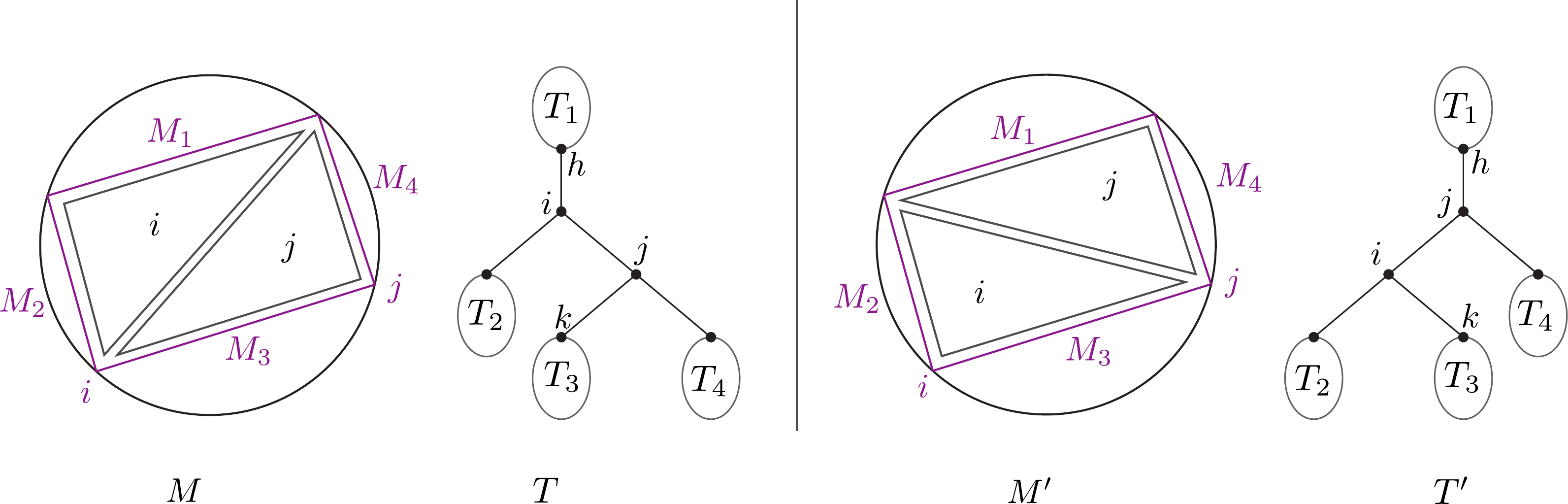}
\end{center}
\caption{A flip of the triangulation in a Matryoshka, and the corresponding effect on the tree $T$ of the triangulation.  \label{fig:step2}}
\end{figure}

The tree $T'$ of (the triangulation of) $M'$ is obtained from $T$ by a simple edge mutation, as shown in Figure \ref{fig:step2}. Since Loday's associahedron is a generalized permutahedron, its adjacent vertices $a_T$ and $a_{T'}$ satisfy $a_T-a_T' = L(e_{i} - e_{j})$ for some $L > 0$ \cite{AguiarArdila, Postnikov}. 
The bracketings $B$ and $B'$ both contain bracket $ij$, so they contain the same set of polygons, and give isomorphic posets $\tau=\tree(B) \cong \tree(B')=\tau'$. This means that $b_B = b_{B'}$  up to the resulting reordering of the edges $(hi, ij, jk) \mapsto (hj, ji, ik)$. 
Therefore if we write $H=(b_B)_{hi},  I=(b_B)_{ij}, J=(b_B)_{jk}$,
\begin{eqnarray*}
f^T(b_B)- f^{T'}(b_{B'}) &=&  (H(e_h-e_i)+I(e_i-e_j)+J(e_j-e_k)) \\
& & \quad - (H(e_h-e_j) + I(e_j-e_i) + J(e_i-e_k))  \\
&=& (-H+2I-J)(e_i-e_j).
\end{eqnarray*}
Now since $w \in \N_{\Cosmo_{n-1}}(c_M)$, we have $w(c_M) \geq w(c_{M'})$, so  
\begin{eqnarray*}
w\left((a_T - a_{T'}) + \varepsilon \, (f^T- f^{T'})(b_B) \right)& \geq & 0 \\
w((L+2 \varepsilon(-H+2I-J))(e_i-e_j) & \geq & 0 \\ 
w(e_i-e_j) & \geq & 0  
\end{eqnarray*}
as desired. 
In the last step we are observing that each entry of $(b_B)$ is at most $3M$ because the formula in Lemma \ref{lem:vertbrassoc} contains at most three terms, so $|2\varepsilon(-H+2I-J)| < 24M\varepsilon$,
whereas each edge $a_T-a_{T'}$ of $\Loday_{n-1}(\a)$ is a sum of edges of its Minkowski summands, so its length $L$ is at least $m$.
\end{proof}

\begin{remark} \label{rem:b=b}
In Step 2 of the proof above, it is essential that  $b_B$ and $b_{B'}$ are equal up to the natural reordering of their coordinates. This is a subtle requirement, because $b_B$ and $b_{B'}$ are vertices of different bracket associahedra $\BrAssoc_T$ and $\BrAssoc_{T'}$. The key property of Devadoss's realization of bracket associahedra that makes this work is that the coordinates of a vertex $b_B$ only depend on the poset $\tau=\tree(B)$ of the bracketing,
and in the above situation we have $\tau=\tree(B) = \tree(B') = \tau'$.
We note that Postnikov's realization \cite{Postnikov} of bracket associahedra does \textbf{not} have this property, so we cannot use it to realize the cosmohedron in this manner. \end{remark}

\noindent
\subsubsection{\textsf{Inequality description.} }

For a chord $C$ and a vertex $i$ or diagonal $ij$ of $\pentagon_{n+2}$, let us write 
$C \succ i$ if the chord $C$ is strictly above vertex $i$ and 
$C \succeq ij$ if the chord $C$ is weakly above diagonal $ij$, with respect to the base edge $e$ at the top.

For a subdivision $\S$ of $\pentagon_{n+2}$ (or equivalently a non-crossing set of chords $\C$), we define some parameters of $\S$ and $\C$ in a few equivalent ways. Each description is sufficient, but it can be useful at times to have all of them available. Let the \emph{depth} of a vertex $i$ or a diagonal $ij$ of $\pentagon_{n+2}$ be 
\begin{eqnarray*}
d_{\S}(i) &=& \# \text{  of chords $C$ of $\C$ with $C \succ i$} \\
&=& \text{distance from vertex $i$ to the root in $\tree(\S)$} \\
&=& \text{depth of vertex $i$ in $\tree_{\square}(\S)$}, \text{ and}\\ 
d_{\S}(ij) &=& \# \text{ of chords $C$ of $\C$ with $C \succeq ij$} \\
&=& \text{depth of cell $ij$ in $\tree_{\square}(\S)$}.
\end{eqnarray*}
where $\tree_{\square}(\S)$ is the canonical embedding of the tree of $S$ in a square grid, and the depth of a vertex or a cell in $\tree_{\square}(\S)$ is the number of non-root vertices of the tree whose southwest/southeast shadow contains it; see Figure \ref{fig:inequality}.

\begin{figure}[h]
    \centering
    \includegraphics[width=5in]{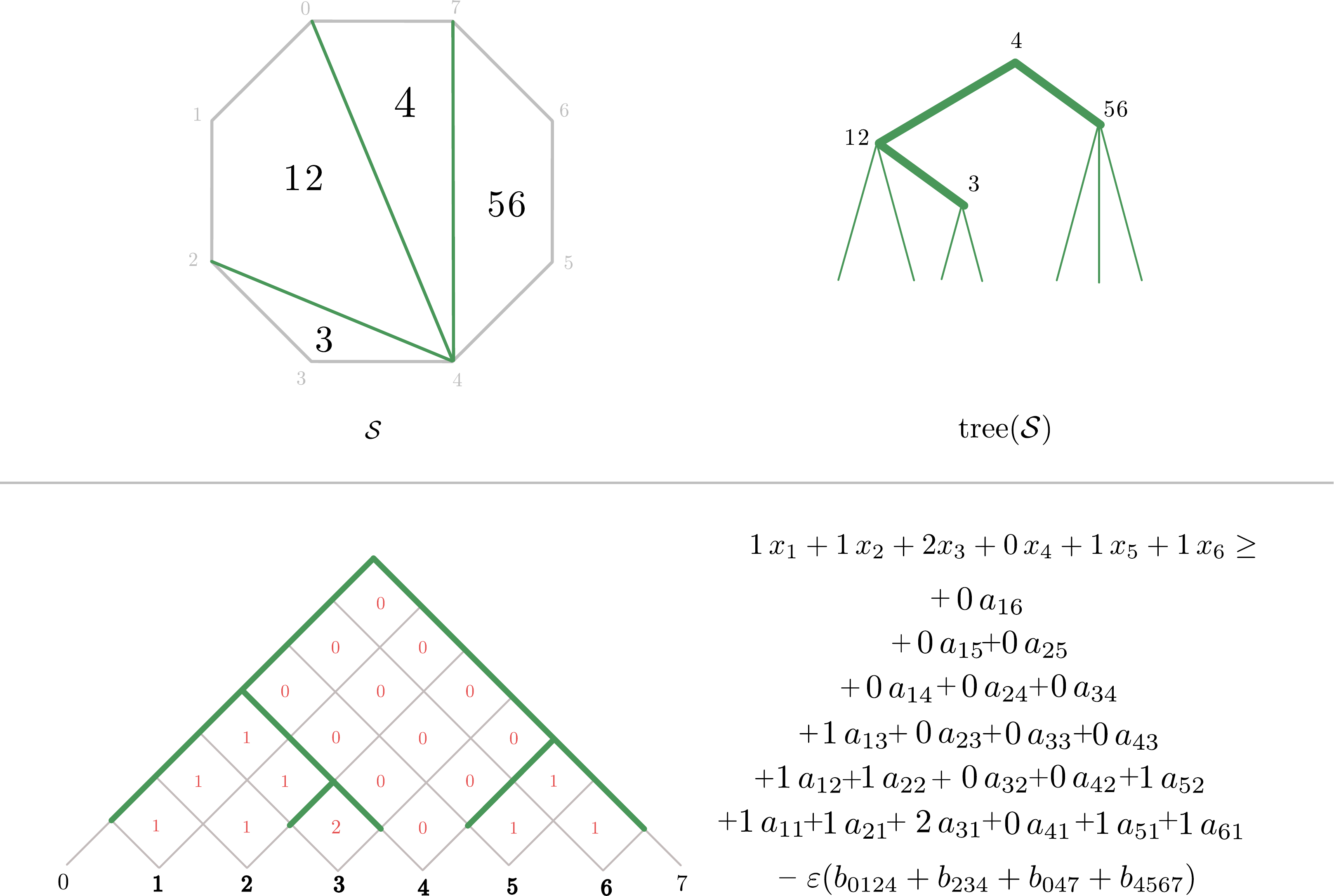}
    \caption{A subdivision $\S$, its corresponding tree, its grid embedding with the corresponding cell depths, and the corresponding cosmohedral facet.} \label{fig:inequality}
\end{figure}

\begin{theorem} \label{thm:cosmoineqs}
The cosmohedron $\Cosmo_{n-1}(\a, \b) \subset \R^n$ is described by the irredundant system of inequalities
\begin{eqnarray*}
\sum_{1 \leq i \leq n} x_i  &=&  \sum_{ij \in \diag(\pentagon_{n+2})} a_{ij},  \\ 
\sum_{1 \leq i \leq n} d_{\S}(i) \ x_i   &\geq&  \sum_{ij \in \diag(\pentagon_{n+2})} d_{\S}(ij)\ a_{ij} - \varepsilon \, \sum_{P \in \S} b(P) \text{ for any subdivision } \S.
\end{eqnarray*}
\end{theorem}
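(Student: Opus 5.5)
Since Theorem \ref{thm:cosmohedron} gives the normal fan of $\Cosmo_{n-1}(\a,\b)$ as $\CosmoFan_{n-1}$, the facets of the cosmohedron correspond to the rays of $\CosmoFan_{n-1}$. Each facet inequality is then $\langle r, x\rangle \geq \min_M \langle r, c_M\rangle$ for a ray $r$, and the minimum is attained at any vertex $c_M$ whose normal cone contains $r$. The plan has four steps: identify the rays, compute the minimum at a convenient vertex, check the minimum is independent of that choice, and deduce irredundance.

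First, the rays. By Theorem \ref{thm:cosmofan3}, rays correspond to the atoms of $\Mat_{n-1}$, namely the Matryoshkas whose cone is one-dimensional in $\R^n/\R\1$. By Lemma \ref{lem:dim}, these are the bracketed trees $(T,B)$ with $|V(T)_{int}|=2$, and I would rather use the cleaner description: a subdivision $\S$ together with the Matryoshka $\S \cup \{P\}$ in which every polygon of $\S$ is wrapped into a single bracket. The cone of this Matryoshka imposes $x_i-x_j = x_k-x_l \geq 0$ for every pair of chords, since all chords lie in the same bracket, the whole polygon. Its generator in $\R^n/\R\1$ therefore has the same jump across every chord. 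Normalising each jump to $1$ and taking the root polygon to have value $0$, the coordinate $x_v$ equals minus the number of chords separating triangle $v$ from the base edge $e$, that is $-d_\S(v)$. Passing to the dual (minimisation) convention, this gives the linear functional $\sum_v d_\S(v) x_v$, matching the statement up to a sign convention that I would check. This yields one facet for each nontrivial subdivision $\S$; the trivial subdivision gives the equation $\sum_v x_v = \text{const}$.

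Second, the right-hand side. I would pick a maximal Matryoshka $M$ refining $\S \cup \{P\}$ and evaluate $\sum_v d_\S(v)(c_M)_v$ using Definition \ref{def:vertmat}, splitting $c_M = a_T + \varepsilon f^T(b_B)$.

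For the associahedral part, note that $d_\S$ is constant on each polygon of $\S$. By Lemma \ref{lem:vertassoc}, each $a_{ij}$ contributes $d_\S(v)$, where $v$ is the triangle that $ij$ crosses transversally. I would show this equals $d_\S(ij)$, the number of chords of $\S$ weakly above $ij$; this is a local geometric check that the transversally crossed triangle lies in the polygon of $\S$ cut by $ij$.

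For the bracket part, I would compute $\sum_v d_\S(v)\,(f^T(b_B))_v = \sum_{\text{edges } uv \in T} (b_B)_{uv}\,(d_\S(u)-d_\S(v))$ by duality with $f_T$. Here $d_\S(u)-d_\S(v)$ is $-1$ on the edges of $T$ that are chords of $\S$ and $0$ on edges interior to a polygon of $\S$. Using Lemma \ref{lem:vertbrassoc2}, summing $(b_B)_e$ over all edges of a bracket gives $b^-$ of that bracket. Summing over all edges and subtracting the edges inside the polygons of $\S$ should telescope, so that the small-bracket corrections cancel and leave $-\sum_{P\in\S} b(P)$ up to a constant absorbed by the equality constraint. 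The careful bookkeeping with $b$ versus $b^-$, and with the global normalisation of the right-hand side, is where errors are likely, so I would verify it against the hexagon example.

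Third, independence of the choice of $M$. Since the ray lies in the normal cone of every such $c_M$, the minimum is attained at each of them and the value computed in the second step cannot depend on $M$, so no separate argument is needed. Fourth, irredundance: each ray of a complete fan is the normal of a genuine facet, and distinct subdivisions give distinct rays. I expect the main obstacle to be the second step, specifically the bracket sum and its sign conventions, rather than the structural argument.
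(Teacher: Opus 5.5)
Your outline is the paper's proof. Rays of $\CosmoFan_{n-1}$ are the Matryoshkas $\S\cup\{\pentagon_{n+2}\}$ for nontrivial $\S$. One evaluates $\langle d_\S,c_M\rangle$ at any maximal $M$ refining $\S$, splitting $c_M=a_T+\varepsilon f^T(b_B)$ into a Loday part and a bracket part.

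Some minor points first. Drop the appeal to Lemma~\ref{lem:dim}: ``$|V(T)_{\mathrm{int}}|=2$'' would give only one-chord subdivisions, and your ``cleaner description'' is the correct one. Your Loday step is fine, since a chord of $\S$ is strictly above $v$ iff it is weakly above each $ij$ crossing triangle $v$ transversally; the paper gets the same number faster by noting that $a_T$ lies on the Loday facet of every $C\in\C$. Your orientation of the ray ($-d_\S$ in the maximizing convention) is also right.

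The gap is the bracket term. It is the only non-formal step, you leave it at ``should telescope'', and both of your predictions about its outcome are wrong.

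Carry it out. Every bracket of $B$ is tight at $b_B$, so $\sum_{e\in\beta}(b_B)_e=b^-(\beta)$. Subtracting the brackets of the non-triangular $Q\in\S$ from the full bracket gives
$\sum_{C\in\C}(b_B)_C=b(\pentagon_{n+2})-\sum_{Q\in\S}b(Q)$.
With your (correct) factor $d_\S(u)-d_\S(v)=-1$ on chords, the $\varepsilon$-contribution is therefore
$\varepsilon\bigl(\sum_{Q\in\S}b(Q)-b(\pentagon_{n+2})\bigr)$.

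First, this constant cannot be absorbed by the equality constraint, because doing so would change the left-hand coefficients $d_\S(i)$. The paper instead assumes $b(\pentagon_{n+2})=0$. That normalization is harmless, since adding $\lambda(|P|-2)$ to $b$ changes no $b_B$.

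Second, the sign of $\sum_{Q\in\S}b(Q)$ comes out $+$, opposite to the statement. The paper reaches $-$ only by writing $f_T(e_{vw})=\rho_{\{C\}}+\R\1$. But $f_T(e_{vw})$ is the indicator of the root side, i.e.\ $-\rho_{\{C\}}$ modulo $\R\1$.

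Your planned hexagon check exposes this rather than confirming the statement. Take $\S=\{04\}$, the preferred $\b$, and $M$ with triangulation $\{02,03,04\}$ and nesting $0123\subset01234\subset012345$. Proposition~\ref{prop:cosmovertex} gives $c_M=(1,2,3,4)+\varepsilon(0,-1,-1,2)$. Hence
$x_1+x_2+x_3=6-2\varepsilon=6+\varepsilon\bigl(b(01234)+b(045)-b(012345)\bigr)$.
This violates the stated bound $6-\varepsilon$, and also the bound $6+2\varepsilon$ obtained after normalizing $b(012345)=0$.

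So, with Definition~\ref{def:vertmat} as written, your argument proves the inequalities with $+\varepsilon\sum_{P\in\S}b(P)$, once $b(\pentagon_{n+2})=0$. In that setting, Step~1 of the proof of Theorem~\ref{thm:cosmohedron} actually needs $b$ convex, as the preferred choice is, not concave. The stated minus sign corresponds instead to the convention $c_M=a_T-\varepsilon f^T(b_B)$ with $b$ concave.
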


\begin{proof}
The lineality space of $\CosmoFan_{n-1}$ is $\R(1,\ldots, 1)$, so the only equality satisfied by $\Cosmo_{n-1}(\a,\b)$ is the given one.

The irredundant inequalities of $\Cosmo_{n-1}(\a,\b)$ correspond to the rays of $\CosmoFan_{n-1}$, which are in bijection with the minimal non-trivial Matryoshkas by Theorem \ref{thm:cosmofan3}. These are the subdivisions $\S$ of $\pentagon_{n+2}$ -- with no nesting -- obtained from the non-crossing sets of chords $\C$. By \eqref{eq:cone(M)}, the ray $\R_{\geq 0}( \rho_{\S}) = \cone(\S)$ is given by the inequalities $x_v-x_w \geq 0$, and hence $(\rho_{\S})_v - (\rho_{\S})_w =1$, for all chords $vw^\perp$ of $\S$. Therefore $\rho_{\S} = (d_{\S}(1), \ldots, d_{\S}(n))$. 
Notice that $\displaystyle \rho_{\S} = \sum_{C \in \C} {\rho_{\{C\}}}$ and $\rho_{\{C\}} = e_{i+1} + \cdots + e_{j-1}$ for $C = ij$.

To determine the extreme value of $(\rho_{\S}, x)$ for $x \in \Cosmo_{n-1}(\a,\b)$, we need to compute 
$(\rho_{\S},c_M)$ for any maximal Matryoshka $M=(T,B)$ that refines $\S$; see Figure \ref{fig:inequalityatavertex}. 
A chord $C$ gets a label $vw^\perp$ for a pair of adjacent triangles $v,w$ in the triangulation $T$, and the definition in Lemma \ref{lem:f_T} gives $f_T(e_{vw}) = \rho_{\{C\}} +\R\1$; this is also a ray of the associahedral fan $\AssocFan_{n-1}$. 
Therefore
\begin{eqnarray*}
(\rho_{\C}, c_M) &=& \sum_{C \in \C} \left(\rho_C\, , \, a_T + \varepsilon \, f^T(b_B)\right) \quad \text{ by Definition \ref{def:vertmat},} \\
 &=& \sum_{C \in \C} (\rho_C, a_T)  + \varepsilon \, \sum_{C=vw^\perp \in \C}  (f_T(e_{vw}), f^T(b_B)) 
\end{eqnarray*}
The first sum comes from the Loday associahedron; using \eqref{eq:Loday} we obtain
\[
\sum_{C \in \C} (\rho_C, a_T) 
= \sum_C \sum_{ij \, : \, ij \preceq C} a_{ij}\\
 = \sum_{ij} d_{\C}(ij) a_{ij}
\]
Since $f_T$ and $f^T$ are dual, the second sum equals
\[
\sum_{vw^\perp \in \C}  (b_B)_{vw} =\sum_{vw^\perp \in \C} \left( b(\bracket_B(vw))  - \sum_{tu \lessdot_B vw} b(\bracket_B(tu))\right)
\]
using Lemma \ref{lem:vertbrassoc2} for the vertices of the bracket associahedron.
Here $tu \lessdot_B vw$ means that $vw$ is covers $tu$ in the extended $\tree^+(B)$ of the bracketing $B$. 
 
Now, every diagonal $d$ of the triangulation $T$ that is not in $\C$ is inside a polygon $P \in \S$, so  $\bracket_B(d) \subset P$ cannot contain any chord in $\C$. It follows that $\C$ is an upper ideal of $\tree^+(B)$, and that the brackets of the maximal non-elements of $\C$ are precisely the polygons in $\S$. Also, $\tree(B)$ has a unique maximal element $VW = \tree(B)_{max}$. Therefore 
\begin{eqnarray*}
\sum_{vw^\perp \in \C}  (b_B)_{vw} 
&=& b(\bracket_B(VW)) - \sum_{tu \in (\tree(B)-\C)_{\text{max}}} b(\bracket_B(tu))\\
&=&
 b(\pentagon) - \sum_{P \in \S} b(P) \\
 &=&
 - \sum_{P \in \S} b(P) 
\end{eqnarray*}
using that $b(\pentagon)=0$ . The desired result follows.
\end{proof}

\begin{figure}[h]
\begin{center}
\includegraphics[width=5in]{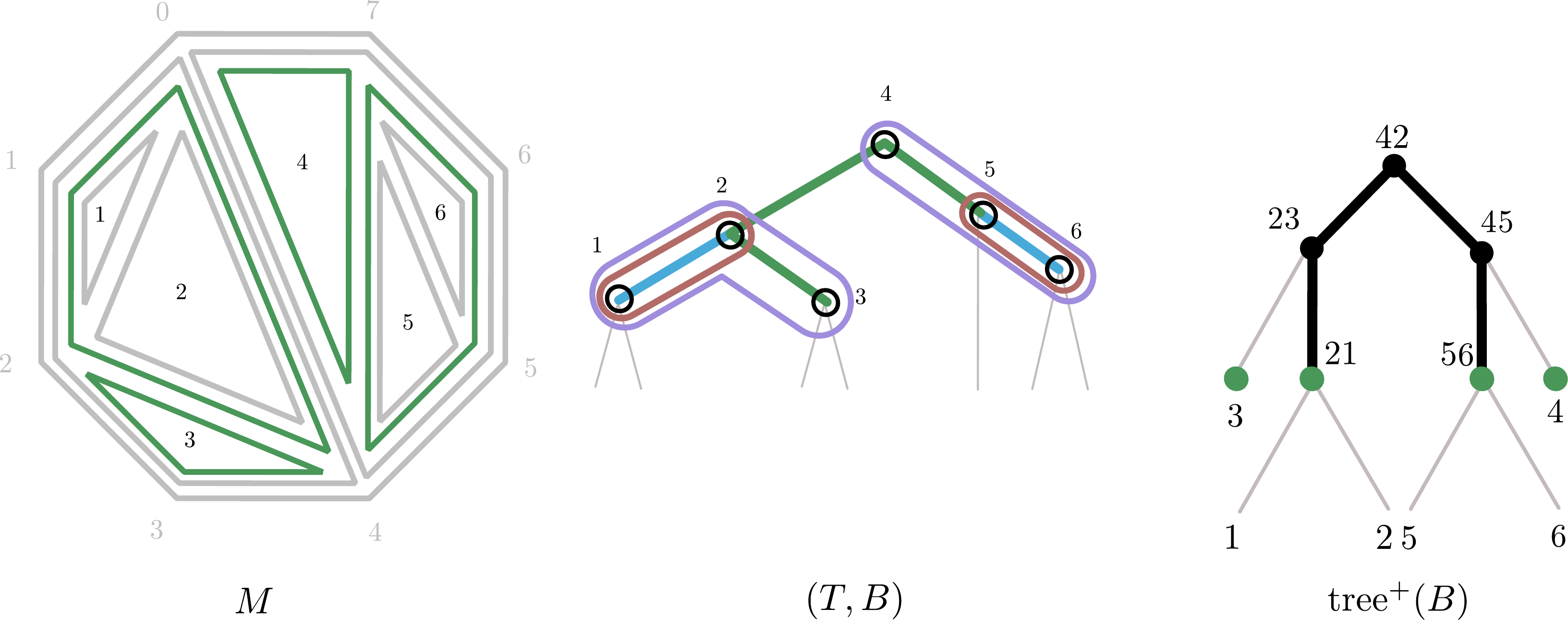}
\end{center}
\caption{A vertex on the facet given by Figure \ref{fig:inequality}. \label{fig:inequalityatavertex}}
\end{figure}

\begin{example}
Figure \ref{fig:inequalityatavertex} shows a Matryoshka $M$ that refines the subdivision $\S$ of Figure \ref{fig:inequality}, so vertex $c_M$ is on the facet that maximizes $\rho_{\C}$. In the computation of $(\rho_\C, c_M)$ above, the left sum is simply the dot product of the cell labels of $\tree_\square(\S)$ and $\a$.
Writing $b(\cdot) = b(\bracket_B(\cdot))$ for legibility, the right sum is 
\begin{eqnarray*}
    &&
[b(42))-b(23)-b(45)]
+
[b(23)-b(3)-b(21)]
+
[b(45)-b(56)-b(4)]
\\
&=& 
0 - b(3)-b(21)-b(56)-b(4) \\
&=& -b_{0124}-b_{234}-b_{047}-b_{4567}
\end{eqnarray*}
\end{example}

\subsubsection{\textsf{Isomorphism with the Arkani--Hamed-Figueiredo-Vaz\~ao cosmohedron.}\label{sec:AFV}}

In \emph{kinematic space} one assigns a real number to each diagonal of $\pentagon_{n+2}$:
\[
\R^{\diag(\pentagon_{n+2})} = 
\{(X_{ij})_{ij \in \diag(\pentagon_{n+2})}\} \cong \R^{(n+2)(n-1)/2}
\]
%of assignments of a real number to each diagonal of $\pentagon_{n+2}$. 
We assign $X_{0,n+1} = 0$ to the base edge $e=0,n+1$.

Let us fix:

$\bullet$ an arbitrary assignment $\A = (A_{ij} \, : \, 0 \leq i << j \leq n)$ of ${\binom{n}{2}}$ positive numbers to the diagonals of $\pentagon_{n+2}$ that do not involve $n+1$, and 

$\bullet$ a small convex vector $\B = (B(P) \, : \, P \text{ is a subpolygon of } \pentagon_{n+2})$.

\noindent Note that the vector $\b$ of the previous section was concave, and $B$ is convex.

\smallskip

Arkani-Hamed, Bai, He, and Yan \cite{ABHY} defined the \emph{associahedron}\footnote{In \cite{ABHY} this is denoted $\mathcal{A}_{n+2}$. We use the subscript $n-1$ to match the root system $A_{n-1}$ and the dimension of the polytope.} $\ABHY_{n-1}(\A)$  to be the $(n-1)$-dimensional polytope 
\begin{align}
 \{X \in \R^{\diag(\pentagon_{n+2})} 
\, : \, & \,   X_{r,s} + X_{r+1,s+1} - X_{r, s+1} - X_{r+1, s} = A_{rs} \text { for  } 0 \leq r  \ll  s \leq n 
\nonumber \\
&  
X_{r,s} \geq 0 \, \text{ for } \, 0 \leq r  \ll  s \leq n+1, 
\} \label{eq:ABHY}
\end{align}
where $X_{0,n+1} = X_{r,r+1}:=0$ \, for \, $0 \leq r \leq n$, and again, $r \ll s$ means that $s-r \geq 2$.

Arkani-Hamed, Figueiredo, and Vaz\~ao \cite{AFV} defined the following polytope, and conjectured that its face structure matches the combinatorics of Matryoshkas.

\begin{definition}\label{def:AFVcosmo}
 The \emph{cosmohedron} $\AFV_{n-1}(\A, \B)$ is the $(n-1)$-dimensional polytope 
\begin{align}
 \{X \in \R^{\diag(\pentagon_{n+2})} 
\, : \, &   \,  X_{r,s} \geq 0 \text{ for } 0 \leq r  \ll  s \leq n+1, \nonumber  \\
&  X_{r,s} + X_{r+1,s+1} - X_{r, s+1} - X_{r+1, s} = A_{rs} \text { for  } 0 \leq r  \ll  s \leq n, \nonumber \\
&  \sum_{C \in \C} X_{C} \geq \sum_{P \in \S(\C)} B(P) \, : \, \text{ for non-crossing } \C \subset \diag(\pentagon_{n+2}) \label{eq:AFVcosmo} \} 
\end{align}
where $\S(\C)$ is the set of polygons in the subdivision of $\pentagon_{n+2}$ induced by $\C$.
\end{definition}

\begin{theorem} \label{thm:samecosmo}
The map $\displaystyle X_{r,s} = \sum_{r < i < s} x_i - \sum_{r \leq i << j \leq s} a_{ij}$ gives linear isomorphisms
\begin{enumerate}
\item
between the associahedra $\Loday_{n-1}(\a)$ and $\ABHY_{n-1}(\A)$, and  
\item
between the cosmohedra $\Cosmo_{n-1}(\a,\b)$ and $\AFV_{n-1}(\A,\B)$, 
\end{enumerate}
where $A_{ij} = a_{i, j+1}$ for each diagonal $ij \in \diag(\pentagon_{n+2})$ and $B(P) = -\varepsilon \, b(P)$ for sufficiently small $\varepsilon > 0$.
\end{theorem}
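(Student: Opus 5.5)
The map is affine-linear, $\phi:\R^n\to\R^{\diag(\pentagon_{n+2})}$ with $\phi(x)_{r,s}=\rho_{\{rs\}}\cdot x-\sum_{r\le i<<j\le s}a_{ij}$, where $\rho_{\{rs\}}=e_{r+1}+\cdots+e_{s-1}$ as in the proof of Theorem \ref{thm:cosmoineqs}. The plan is to show that $\phi$ restricted to the hyperplane $H=\{\sum_i x_i=\sum a_{ij}\}$ is an injective affine map onto the affine subspace cut out by the ABHY equations, and then to match the remaining inequalities one by one.

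\textbf{Step 1 (equations).} I will check that $\phi(H)$ lies in the affine space $\{X_{r,s}+X_{r+1,s+1}-X_{r,s+1}-X_{r+1,s}=A_{rs}\}$. In the combination $X_{r,s}+X_{r+1,s+1}-X_{r,s+1}-X_{r+1,s}$ the linear parts in $x$ cancel, since $[r+1,s-1]+[r+2,s]-[r+1,s]-[r+2,s-1]=0$. In the constant parts, inclusion–exclusion on the index intervals leaves only the pair $(i,j)=(r,s+1)$, so the combination equals $a_{r,s+1}=A_{rs}$. The boundary conventions must also hold: $X_{r,r+1}=0$ trivially, and $X_{0,n+1}=\sum_i x_i-\sum a_{ij}=0$ on $H$. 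For injectivity, the diagonals $X_{0,s}$ with $2\le s\le n+1$ determine the partial sums $x_1+\cdots+x_{s-1}$, so $\phi|_H$ is injective. Both spaces have dimension $n-1$, so $\phi$ maps $H$ isomorphically onto the ABHY affine space.

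\textbf{Step 2 (associahedra).} By \eqref{eq:Loday}, the facet inequality of $\Loday_{n-1}(\a)$ for the interval $[r+1,s-1]$ is exactly $X_{r,s}\ge 0$. Hence $\phi(\Loday_{n-1}(\a))=\ABHY_{n-1}(\A)$, which proves part 1.

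\textbf{Step 3 (cosmohedra).} Take a non-crossing set $\C$ with subdivision $\S$. Since $\rho_\S=\sum_{C\in\C}\rho_{\{C\}}$, we get
\[
\sum_{C\in\C}X_C=\rho_\S\cdot x-\sum_{C\in\C}\sum_{ij\preceq C}a_{ij}=\sum_i d_\S(i)x_i-\sum_{ij}d_\S(ij)a_{ij},
\]
using the same double-count as in the proof of Theorem \ref{thm:cosmoineqs}. Therefore the AFV inequality $\sum_{C\in\C}X_C\ge\sum_{P\in\S}B(P)$ with $B=-\varepsilon b$ is precisely the facet inequality of Theorem \ref{thm:cosmoineqs}. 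For a single chord, if $b$ of the two resulting pieces can be arranged to vanish (after a harmless normalization of $\b$ on small polygons, or by absorbing it into the $a$'s), this inequality specializes to a strengthened form of $X_{rs}\ge0$. In any case, the inequalities $X_{r,s}\ge0$ that remain in the AFV system pull back to the Loday facet inequalities, which the cosmohedron satisfies with positive slack of order $\varepsilon$ or more, so they are redundant. Thus $\phi(\Cosmo_{n-1}(\a,\b))=\AFV_{n-1}(\A,\B)$.

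The main obstacle is the bookkeeping of the constant terms. I need to confirm that the two index conventions agree: $A_{ij}=a_{i,j+1}$, the $\diag$ versus $\diag^+$ sums (the base edge term $a_{0,n+1}$ sits in the equation), and the normalization $b(\pentagon_{n+2})=0$ used in Theorem \ref{thm:cosmoineqs}. I also need to check that the relation between convex $\B$ and concave $\b$ is consistent.
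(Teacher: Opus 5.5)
Your Steps 1--2 and the central identity of Step 3 are the paper's proof. It uses the same $\binom{n}{2}$ equations, reads Loday's facets as $X_{r-1,s+1}\ge 0$, and rewrites the facet inequalities of Theorem~\ref{thm:cosmoineqs} via $d_{\S}=\sum_{C\in\C}d_{\{C\}}$ as $\sum_{C\in\C}X_C\ge\sum_{P\in\S}B(P)$. Your checks that $X_{0,n+1}=0$ on the hyperplane and that the $X_{0,s}$ give injectivity are details the paper leaves implicit.

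The one step you add is discarding the AFV inequalities $X_{r,s}\ge 0$ as redundant. That step does not follow, and it cannot be derived from the stated hypotheses.

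\emph{What redundancy actually requires.} Take a chord $C$ splitting $\pentagon_{n+2}$ into $P_1,P_2$. The subdivision $\{C\}$ gives a facet of the cosmohedron, so by your own identification the minimum of $X_C$ over the image of $\Cosmo_{n-1}(\a,\b)$ is exactly $B(P_1)+B(P_2)$. Hence ``positive slack'' is equivalent to $B(P_1)+B(P_2)\ge 0$ for every chord. This is a sign condition, so shrinking $\varepsilon$ does not help.

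\emph{Why the hypotheses do not give it.} Convexity of $\B$ together with the normalization at the full polygon does not imply it. Adding $\lambda(|P|-2)-\lambda n$ to $B(P)$ preserves both, but lowers every single-chord sum $B(P_1)+B(P_2)$ by $\lambda n$.

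\emph{A concrete failure.} The condition fails in the paper's own example. With the preferred parameters, $b_{M_2}=(-1,0,3,-2)$ gives $(c_{M_2})_4=1-2\varepsilon$, hence $X_{35}(c_{M_2})=-2\varepsilon<0$. So that cosmohedron is not contained in $\Loday_{n-1}(\a)$, and the inequality $X_{35}\ge 0$ of $\AFV$ genuinely cuts it.

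\emph{How to close the gap.} The paper's proof simply declares the two inequality systems equivalent and never mentions the inequalities $X_{r,s}\ge 0$, so it has the same omission. You should add, as a hypothesis on $\b$, that $B(P_1)+B(P_2)\ge 0$ for every chord; equivalently, $\Cosmo_{n-1}(\a,\b)\subseteq\Loday_{n-1}(\a)$. The shift above with $\lambda<0$ shows this can always be arranged without losing convexity or the normalization. Your remaining worry about convexity versus concavity is harmless: $\B=-\varepsilon\b$ with $\varepsilon>0$ is convex exactly when $\b$ is concave.
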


Before proving this, we note that $\a$ has size ${\binom{n+1}{2}}$ whereas $\A$ has size ${\binom{n}{2}}$; this is because the $n$ entries $a_{i-1, i+1}$ do not enter the definition of $\AFV_{n-1}(\A,\B)$. These entries are the coefficients of the points $\Delta_{\{i\}} = e_i$ in the Minkowski sum of $\Loday_{n-1}(\a)$, so they only cause a translation of the polytope.

\begin{proof}[Proof of Theorem \ref{thm:samecosmo}]
1. A vector $x \in \R^n$ determines a vector $X$ in kinematic space that satisfies the linearly independent equations
\[
X_{r,s} + X_{r+1,s+1} - X_{r, s+1} - X_{r+1, s} = a_{r, s+1} = A_{r,s} \qquad \text{ for } 0 \leq r << s \leq n. 
\]
as one readily verifies. These $\binom{n}{2}$ equations restrict $X$ to a vector space of dimension $(n+2)(n-1)/2 - \binom{n}{2} = n-1$, which is precisely the dimension of the associahedron. The inequalities $X_{r-1,s+1} \geq 0$ are restatements of the defining inequalities of $\Loday_{n-1}(\a)$ as written in \eqref{eq:Loday}, proving the desired isomorphism. See also \cite{early}.

2. It remains to translate the inequalities of the cosmohedron $\Cosmo_{n-1}(\a,\b)$:
\[
\sum_{1 \leq i \leq n} d_{\C}(i) \ x_i   \geq  \sum_{ij \in \diag(\pentagon_{n+2})} d_{\C}(ij)\ a_{ij} - \varepsilon \, \sum_{P \in \S} b(P) 
\]
for any subdivision $\S$ with chords $\C$. We rewrite this as
\[
\sum_{C \in \C} \left(\sum_{1 \leq i \leq n} d_{\{C\}}(i) \ x_i \right)  \geq  \sum_{C \in \C} \left(\sum_{ij \in \diag(\pentagon_{n+2})} d_{\{C\}}(ij)\ a_{ij} \right)- \varepsilon \, \sum_{P \in \S} b(P). 
\]
Now notice that $d_{\{C\}}(i)$ is $1$ if $C \succ i$ and $0$ otherwise, and $d_{\{C\}}(ij)$ is $1$ if $C \succeq ij$ and $0$ otherwise, so
\[
X_C = \sum_{1 \leq i \leq n} d_{\{C\}}(i) \ x_i  - \sum_{ij \in \diag(\pentagon_{n+2})} d_{\{C\}}(ij)\ a_{ij}.
\]
Thus the inequalities of $\Cosmo_{n-1}(\a,\b)$ in $x$-space are equivalent to the inequalities of $\AFV_{n-1}(\A,\B)$ in $X$-space.
We note that $\B$ is concave if and only if $\b$ is concave, and $\B$ is made sufficiently small -- as \cite{AFV} requires -- by making $\varepsilon$ sufficiently small. 
\end{proof}

In particular, this proves one of our main results: the correctness of Arkani-Hamed, Figueiredo, and Vaz\~ao's construction. 

\begin{corollary}
The face poset of Arkani-Hamed, Figueiredo, and Vaz\~ao's cosmohedron $\AFV_{n-1}$ is anti-isomorphic to the poset of Matryoshkas of an $(n+2)$-gon.
\end{corollary}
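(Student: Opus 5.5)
The corollary is a chain of three earlier results, and the plan is to compose them in order.

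First, Theorem \ref{thm:samecosmo} says $\AFV_{n-1}(\A,\B)$ is the image of $\Cosmo_{n-1}(\a,\b)$ under a linear map that is injective on the affine hull. We need to know that every admissible AFV parameter arises this way: given positive $\A$ and a small convex $\B$, set $a_{i,j+1}=A_{ij}$. The $n$ unused entries $a_{i-1,i+1}$ may be any positive numbers, since they only translate the polytope. Then set $b=-\B/\varepsilon$ for an $\varepsilon$ small enough to meet the bound $\varepsilon<m/(24M)$. The constraint there is $M=\max|b|=\max|\B|/\varepsilon$, so the condition reads $\max|\B|<m/24$. This is exactly the meaning of ``$\B$ sufficiently small'', and once it holds any valid $\varepsilon$ works. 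Convexity of $\B$ is equivalent to concavity of $\b$. Since an injective affine map preserves face lattices, $\AFV_{n-1}$ and $\Cosmo_{n-1}$ are combinatorially equivalent.

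Second, Theorem \ref{thm:cosmohedron} identifies the normal fan of $\Cosmo_{n-1}$ with $\CosmoFan_{n-1}$. For any polytope, the map sending a face to its normal cone is an inclusion-reversing bijection from the nonempty faces onto the cones of the normal fan. Third, Theorem \ref{thm:cosmofan3} (equivalently Theorem \ref{thm:cosmofan2}) gives an order-preserving bijection $M\mapsto\cone(M)$ between Matryoshkas and cones of $\CosmoFan_{n-1}$. Composing the three maps gives the anti-isomorphism.

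The remaining work is bookkeeping with the extremal elements. The empty face of the polytope corresponds to the adjoined maximum $\widehat{1}$ of $\Mat_{n-1}$. The whole polytope corresponds to the minimum Matryoshka $\{P\}$, whose cone is the lineality space $\R\1$; under the identification with $\R^n/\R\1$ this is the zero cone. The main thing to check, and the only point of any delicacy, is the parameter matching in the first step, namely that $\varepsilon$ can be chosen compatibly with the smallness hypothesis on $\B$ from \cite{AFV}.
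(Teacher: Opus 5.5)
Your proposal is correct and is essentially the paper's argument: the corollary is obtained by composing the linear isomorphism of Theorem~\ref{thm:samecosmo} with Theorem~\ref{thm:cosmohedron} and Theorem~\ref{thm:cosmofan3}. Your explicit parameter matching shows that the smallness requirement on $\B$ reduces to $\max_P |B(P)| < m/24$, whatever $\varepsilon$ is chosen; this just makes precise the paper's remark that $\B$ is made small enough by shrinking $\varepsilon$.
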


It also lets us determine precisely what their requirement of a ``small enough" $B$ means.
Theorem \ref{thm:cosmohedron} proves that 
$\Cosmo_{n-1}(\a,\b)$ is a cosmohedron for $0 < \varepsilon < (\min a_{ij})/24(\max b(P))$; it would be interesting to trace the proof carefully and find the optimal constant.

\section{\textsf{Face structure and enumeration}\label{sec:enum}}

\noindent
\textsf{\textbf{The dimension of a face.}} 
Recall that $M_{\text{not min}}$ is the number of polygons in a Matryoshka $M$ that are not inclusion-minimal. 

\begin{lemma} \label{lem:dim2}
The cone of a Matryoshka $M$ in the cosmohedral fan has dimension
\[
\dim \cone(M) = |M_{\text{not min}}| + 1.
\]
\end{lemma}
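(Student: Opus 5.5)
We reduce the statement to a bracketing count via Lemma \ref{lem:f_T}. Let $M$ have bracketed tree $(T,B)=(T_M,B_M)$ under Lemma \ref{lem:bijection}. By the lemma identifying $\cone(M)=\cone(T_M,B_M)$, it suffices to compute $\dim\cone(T,B)$. Choose any binary tree $T^+\geq T$. Then $B$ is a pre-bracketing of $T^+$, and Lemma \ref{lem:f_T}.4 gives a linear isomorphism $f_{T^+}:\cone_{T^+}(B)_{\geq 0}\to\cone(T,B)$. So it is enough to compute the dimension of the positive cone $\cone_{T^+}(B)_{\geq 0}\subseteq\R^{E(T^+)}$.

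In $\cone_{T^+}(B)_{\geq 0}$, the contracted edges $E(T^+)-E(T)$ have $y_e=0$. On the remaining edges $E(T)$, the constraints are $y_e\geq y_f\geq 0$ for $e\geq_B f$. These force $y_e=y_f$ exactly when $e$ and $f$ lie in the same equivalence class of the pre-poset $\tree(B)$, that is, when $\bracket_B(e)=\bracket_B(f)$. Modulo these equalities, the inequalities come from the acyclic tree poset on the brackets of $B$, together with positivity. Choosing strictly decreasing positive values along this tree, with one value per bracket, produces an interior point. Hence
\[
\dim\cone_{T^+}(B)_{\geq 0}=|B|,
\]
which agrees with the dimension of $\cone_H(B)$ stated for bracket fans.

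It remains to show that $|B_M|=|M_{\text{not min}}|$. By construction, $B_M=\{\bracket(P): P\in M_{\text{not min}}\}$. The map $P\mapsto\bracket(P)$ is injective, since a polygon that is a union of minimal polygons is determined by its dual subtree. Note that the full polygon $\pentagon_{n+2}$ belongs to $M_{\text{not min}}$ whenever $M\neq\{\pentagon_{n+2}\}$, and correspondingly $T\in B_M$.

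We then reconcile the ambient space with the "$+1$" in the statement. In Lemma \ref{lem:dim} the paper counts $\dim\cone(T)=|V(T)_{int}|+1$, which means cones are regarded in $\R^n$ with the lineality line $\R\1$ included. Passing from $\R^{E(T^+)}\cong\R^n/\R\1$ back to $\R^n$ adds $1$, giving $\dim\cone(M)=|B_M|+1=|M_{\text{not min}}|+1$.

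We check the edge case $M=\{\pentagon_{n+2}\}$. Here $M_{\text{not min}}=\emptyset$, $T$ is a single vertex, and the cone is the line $\R\1$, of dimension $1$, consistent with the formula.

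The main obstacle is the bookkeeping: the dimension must be computed in the same ambient convention as Lemma \ref{lem:dim}, and each non-minimal polygon must contribute exactly one free parameter. The latter is precisely the equivalence-class argument above, namely that edges sharing a smallest bracket are forced equal and nothing else is. It should be cross-checked against Lemma \ref{lem:dim} for maximal $M$, where $|M_{\text{not min}}|$ is the number of diagonals $n-1$.
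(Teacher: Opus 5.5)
Your proof is correct and follows the paper's route: identify $\cone(M)=\cone(T,B)$ with the positive bracket cone via $f$, whose dimension is $|B|=|M_{\text{not min}}|$. You are more careful than the paper's one-line proof in two ways: you pass to a binary $T^+\ge T$ so that Lemma \ref{lem:f_T}.4 actually applies, and you account explicitly for the lineality line $\R\1$ that produces the $+1$ (the paper's proof stops at $|B|$). One small caution: the $\R^n$ convention is justified by $\cone(M)\subseteq\R^n$ being invariant under $\R\1$, not by Lemma \ref{lem:dim}, whose printed formula $|V(T)_{int}|+1$ gives $n+1$ for a binary tree and so matches neither convention.
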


\begin{proof}
If $(T,B)$ is the bracketed tree corresponding to $M$ from Lemma \ref{lem:bijection}, we have $\cone(M) = \cone(T,B) = f_T(\cone_T(B)_{\geq 0})$  and its dimension equals the dimension of $\cone_T(B)$, which is the number of brackets in $B$. This equals the number of non-minimal polygons of the Matryoshka $M$.
\end{proof}

\noindent \textbf{\textsf{Faces of the cosmohedron factor combinatorially, but not geometrically.}} 
The faces of the cosmohedron have an interesting factorization property that is subtler and weaker than the analogous property for bracket associahedra.

\medskip

Bracket associahedra\footnote{and, more broadly, generalized permutahedra \cite{AguiarArdila, Edmonds}} have the property that any facet factors canonically as a Cartesian product of two bracket associahedra, as we now describe. Let $H$ be a graph, $\beta$ be a bracket of $H$, and $H/\beta$ the contraction of $\beta$ in $H$, whose edges are in bijection with $E(H)-E(\beta)$.  Also let $b|_\beta(\beta') = b(\beta')$ for each bracket $\beta'$ of $\beta$ and $b/_\beta(\beta'') = b(\beta\cup \beta'') - b(\beta)$ for each bracket $\beta''$ of $H/\beta$. Note that if $b$ is concave on $\Brackets(H)$, then $b|_\beta$ and $b/_\beta$ are concave on $\Brackets(\beta)$ and $\Brackets(H/\beta)$, respectively.

\begin{lemma} \label{lemma:bracketfactorization} \cite{AguiarArdila}
For any graph $H$ and any bracket $\beta$ of $H$, the $e_\beta$-maximal facet of $\BrAssoc_{H}(\b)$ is linearly isomorphic to the product of bracket associahedra
\[
(\BrAssoc_{H}(\b))_\beta \cong 
\BrAssoc_{\beta}(\b|_\beta) \times
\BrAssoc_{H/\beta}(\b/_\beta) \subset \R^{E(\beta)} \times \R^{E(H/\beta)}
\]
\end{lemma}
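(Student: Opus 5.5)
We work directly with the inequality description \eqref{eq:BrAssoc}. The $e_\beta$-maximal face of $\BrAssoc_H(\b)$ is the set $F$ of points $y\in\BrAssoc_H(\b)$ satisfying $\sum_{e\in\beta} y_e = b(\beta)$. We split $y=(y',y'')$ with $y'\in\R^{E(\beta)}$ and $y''\in\R^{E(H)-E(\beta)}=\R^{E(H/\beta)}$. The claim is that $y\in F$ if and only if $y'\in\BrAssoc_\beta(\b|_\beta)$ and $y''\in\BrAssoc_{H/\beta}(\b/_\beta)$. So the isomorphism is just the coordinate splitting, which is linear.

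\textbf{The inclusion $\supseteq$.} Take $y'$ and $y''$ in the two factors. The equalities combine: $\sum_{E(H)} y = b(\beta) + \bigl(b(H)-b(\beta)\bigr) = b(H)$. For an arbitrary bracket $\gamma$ of $H$, we must show $\sum_{e\in\gamma} y_e\le b(\gamma)$, and we split into cases.
\begin{itemize}
\item If $\gamma\cap\beta=\emptyset$ as edge sets and $\gamma$ shares no vertex with $\beta$, then $\gamma$ is also a bracket of $H/\beta$, and $b/_\beta(\gamma)=b(\beta\cup\gamma)-b(\beta)$. This is not directly $b(\gamma)$, so we handle it as in the general case below.
\item In general, write $\gamma'=\gamma\cap\beta$, split into its connected components, each a bracket of $\beta$. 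Let $\gamma''$ be the image of $\gamma$ in $H/\beta$, which is connected whenever $\gamma$ meets $\beta$ in a vertex.
\end{itemize}
Then we estimate
\[
\sum_\gamma y = \sum_{\gamma'} y' + \sum_{\gamma''} y'' \le b(\gamma') + b(\gamma\cup\beta) - b(\beta) \le b(\gamma).
\]
The last step is the concavity inequality $b(\gamma)+b(\beta)\ge b(\gamma\cap\beta)+b(\gamma\cup\beta)$, applied when $\gamma$ and $\beta$ intersect. When $\gamma\cap\beta$ is disconnected or edgeless, we use the extended convention that $b$ of a disjoint union is the sum over components. When $\gamma$ and $\beta$ are disjoint, the inequality $\sum_\gamma y\le b(\gamma)$ has to come from a different route. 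Here we would use that the face $F$ is nonempty and invoke the generalized permutahedron (submodular) structure, so that the restriction and contraction of a submodular function give the face, as in \cite{AguiarArdila}. Concretely, $b$ extends to a submodular function on edge sets by summing over components, and the standard fact that the $e_\beta$-maximal face of a base polytope is the product of the restriction base polytope and the contraction base polytope applies.

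\textbf{The inclusion $\subseteq$.} For $y\in F$ and a bracket $\beta'\subseteq\beta$, the bound $\sum_{\beta'} y'\le b(\beta')$ is inherited directly. For a bracket $\beta''$ of $H/\beta$, let $\gamma$ be its preimage together with $\beta$, which is a bracket of $H$. Then
\[
\sum_{\beta''} y'' = \sum_{\gamma} y - b(\beta) \le b(\gamma)-b(\beta) = b/_\beta(\beta'').
\]
This works if $\beta''$ contains the contracted vertex. If it does not, $\beta''$ is a bracket $\gamma$ of $H$ disjoint from $\beta$. We then need $\sum_\gamma y\le b(\gamma\cup\beta)-b(\beta)$, where $\gamma\cup\beta$ may be disconnected.

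\textbf{Main obstacle.} The main difficulty is the bookkeeping of disjoint and disconnected brackets. We resolve it by extending $b$ additively over connected components, which is what Devadoss's convention and the extended convention above already do. Under this extension, $b/_\beta(\gamma)=b(\gamma)$ whenever $\gamma$ is far from $\beta$, and both inclusions become the standard restriction/contraction factorization of faces of generalized permutahedra.
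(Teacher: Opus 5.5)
Your argument is correct in substance but takes a different route. The paper gives no proof; it cites Aguiar--Ardila's general theorem that the $e_S$-maximal face of a generalized permutahedron with submodular function $z$ is the product of the generalized permutahedra of the restriction $z|_S$ and the contraction $z/_S$. Using that theorem requires two preliminary facts:
\begin{itemize}
\item $\BrAssoc_H(\b)$ is the base polytope of the extension $\hat b$ of $b$ to all edge sets, additive over connected components. This amounts to $\hat b$ being submodular, which one checks from concavity by merging overlapping components one at a time.
\item $\hat b|_\beta$ and $\hat b/_\beta$ are the component-additive extensions of $b|_\beta$ and $b/_\beta$.
\end{itemize}
Your direct check of the inequality descriptions avoids both. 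It uses concavity only for the pairs $(\gamma,\beta)$ with $\gamma$ meeting $\beta$. It also uses the observation that brackets of $H/\beta$ through the contracted vertex lift to brackets of $H$ containing $\beta$. So your route is more elementary and self-contained. The cited route is shorter once the theory is available, and it gives the whole face structure (iterated factorization, edge directions) for free.

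Two points need tidying.

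\emph{The vertex-disjoint case.} It needs no detour. The definition $b/_\beta(\beta'')=b(\beta\cup\beta'')-b(\beta)$ already presupposes the additive convention when $\beta\cup\beta''$ is disconnected. Hence $b/_\beta(\gamma)=b(\gamma)$ for any bracket $\gamma$ vertex-disjoint from $\beta$, and the required inequality is immediate in both directions. Your intermediate appeal to nonemptiness of $F$ and to the submodular machinery is unnecessary. So is your closing retreat to ``the standard factorization''; both obscure the fact that your direct argument already closes.

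\emph{Identifying $F$ with the $e_\beta$-maximal face.} You do not justify that $F=\{y\in\BrAssoc_H(\b):\sum_{e\in\beta}y_e=b(\beta)\}$ is the $e_\beta$-maximal face. This requires the inequality for $\beta$ to be tight, i.e.\ $F\neq\emptyset$. It follows from your inclusion $\supseteq$ once both factors are nonempty. That holds, for instance, by Theorem \ref{thm:brassoc} applied to the concave functions $b|_\beta$ and $b/_\beta$. The dimension count $(|E(\beta)|-1)+(|E(H)|-|E(\beta)|-1)=|E(H)|-2$ then shows this face is a facet when $\beta\neq H$.
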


\medskip

Facets, and hence faces, of cosmohedra have an analogous factorization property -- but it is significantly weaker. The first statement below was observed in \cite{AFV}, but it holds an unexpected surprise.

\begin{proposition} \label{prop:factorization} 
Let $\S$ be a subdivision of the $(n+2)-gon$ into polygons $P_1, \ldots, P_k$ where $P_i$ is an $(n_i+2)$-gon. Let $T(\S)$ be its dual graph, which is a tree. Then the facet of $\Cosmo_n$ corresponding to $\S$ is \textbf{combinatorially} isomorphic to the product
\[
(\Cosmo_n)_{\S} \cong^{comb} \BrAssoc_{T(\S)} \times (\Cosmo_{n_1}) \times \cdots \times (\Cosmo_{n_k}).
\]
However, this factorization \textbf{does not hold geometrically}. 
\end{proposition}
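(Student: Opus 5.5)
The plan has two parts. The combinatorial isomorphism comes from the face poset of Matryoshkas (Theorem \ref{thm:cosmofan3} and its polytope version, Theorem \ref{thm:cosmohedron}). The failure of geometric factorization comes from an explicit small example.

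For the combinatorial statement, the faces of the facet $(\Cosmo_n)_\S$ correspond, anti-isomorphically, to the Matryoshkas $M \geq \S$ in $\Mat$. This is the interval $[\S,\widehat 1]$, since the ray of $\S$ is the minimal nontrivial Matryoshka $\S$ itself. So it suffices to show that the poset of Matryoshkas containing $\S$ is isomorphic to the product of the face posets of the factors, with top elements adjoined appropriately. I would use the recursive description of Matryoshkas. A Matryoshka $M \supseteq \S$ is determined by two pieces of data, obtained by intersecting with each polygon and looking above $\S$:
\begin{itemize}
\item the polygons of $M$ lying inside each $P_i$, which form a Matryoshka $M_i$ of $P_i$;
\item the polygons of $M$ that are unions of polygons of $\S$, which by Lemma \ref{lem:bijection} (dualizing) form a bracketing $B$ of the tree $T(\S)$.
\end{itemize}
This works because every polygon of $M$ is either contained in some $P_i$ or is a union of several $P_i$. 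The nesting/disjointness of polygons in a Matryoshka forbids partial overlaps with the $P_i$, since the $P_i$ themselves lie in $M$. Conversely, any choice of $(B, M_1, \ldots, M_k)$ reassembles into a Matryoshka containing $\S$, and containment is checked componentwise. One then invokes Theorem \ref{thm:assocfan} to identify bracketings of $T(\S)$ with faces of $\BrAssoc_{T(\S)}$, and Theorem \ref{thm:cosmofan3} for each $\Cosmo_{n_i}$. The face lattice of a product of polytopes is the product of the face lattices (minus the empty faces, plus a single bottom), so this gives the combinatorial isomorphism. The only care needed is bookkeeping about the full polygon $P$ and the full graph $T(\S)$. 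Both are always present: the full bracket corresponds to $P$, and each trivial Matryoshka $\{P_i\}$ corresponds to the whole polytope $\Cosmo_{n_i}$. So the gradings match, as the dimension count confirms. By Lemma \ref{lem:dim2}, the facet has dimension $n-1$. The product has dimension $(k-1) + \sum (n_i - 1)$, and this equals $n-1$ because $\sum n_i = n - k + 1$.

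For the geometric failure, I would exhibit a single small case, the natural candidate being a pentagon or hexagon split into a triangle and a quadrilateral or pentagon. The facet should be combinatorially a product, e.g.\ a square. I would compute its vertices $c_M$ via Proposition \ref{prop:cosmovertex} with the preferred choice of $\a$ and $\b$, and check that the facet is not affinely a product, for instance that it is a non-parallelogram quadrilateral. The mechanism is that the vertex formula couples the $\a$-part (which depends on the triangulation of each $P_i$ through the Loday coordinates, whose crossing structure spans several polygons) with the $\varepsilon$-part. So the edge directions along the $\BrAssoc$ factor change as the triangulations of the $P_i$ change.

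The main obstacle is this explicit example: choosing a case where the non-parallelism is visible and doing the vertex arithmetic correctly. If the smallest case happens to be a parallelogram, I would move to a hexagon subdivided into two quadrilaterals. There the factors are $\BrAssoc$ of an edge and two segments, so the facet is a combinatorial cube, and I would show two opposite edges are not parallel.
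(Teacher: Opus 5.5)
Your combinatorial half is correct and is the paper's own argument: a Matryoshka containing $\S$ amounts to an independent choice of a bracketing of $T(\S)$ and a Matryoshka on each $P_i$, ordered componentwise. The gap is in the geometric half, and it comes from a dimension error.

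$\BrAssoc_{T(\S)}$ lies in the hyperplane $\sum_e y_e=b(T(\S))$ of $\R^{E(T(\S))}$, so it has dimension $|E(T(\S))|-1=k-2$, not $k-1$. Also, triangulating each $P_i$ triangulates the $(n+2)$-gon, so $\sum_i n_i=n$, not $n-k+1$. (Your own numbers give $n-k$. The correct count is $(k-2)+\sum_i(n_i-1)=n-2$, the dimension of a facet of the $(n-1)$-dimensional cosmohedron of the $(n+2)$-gon.)

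That check is inessential, but the same error makes every example you propose unusable. All of them have $k=2$, so the bracket factor is a point:
\begin{itemize}
\item the pentagon cut into a triangle and a quadrilateral gives a segment;
\item the hexagon cut into a triangle and a pentagon gives a decagon with a single nontrivial factor;
\item the hexagon cut into two quadrilaterals gives a combinatorial square, not a cube.
\end{itemize}
For $k=2$ the facet is in fact an honest product. By Definition \ref{def:vertmat}, $c_M=g_1(M_1)+g_2(M_2)$, with $g_i$ supported on the coordinates labelled by triangles of $P_i$. This holds for three reasons. The Loday part splits, since this is a facet of a generalized permutahedron. The terms of $f^T(b_B)$ from diagonals inside $P_i$ depend only on $M_i$. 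The separating diagonal contributes $(b(P)-b(P_1)-b(P_2))(e_x-e_y)$, with $x$ determined by $M_1$ and $y$ by $M_2$. For instance, take the hexagon (vertices $0,\dots,5$, base $\{0,5\}$) cut along $\{1,4\}$ with the preferred parameters. Proposition \ref{prop:cosmovertex} gives a parallelogram with sides $(1-3\varepsilon)(e_3-e_2)$ and $(1-3\varepsilon)(e_4-e_1)$.

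The missing idea is that you need $k\geq 3$ and at least one non-triangular $P_i$, so that both the bracket factor and some cosmohedral factor are nontrivial. Only then does the mechanism you describe kick in:
\begin{itemize}
\item The bracket-swap edge $f^T(e_{ij}-e_{kl})=(e_i-e_j)-(e_k-e_l)$ from Step 1 of the proof of Theorem \ref{thm:cosmohedron} changes direction when a flip inside some $P_i$ relabels the adjacent triangles.
\item The length of a flip edge in Step 2 depends on the chosen bracketing of $T(\S)$, through the coordinates $H$ and $J$ of $b_B$.
\end{itemize}
The paper uses the smallest such case: the hexagon cut by a long and a short diagonal into a quadrilateral and two triangles. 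There $T(\S)$ is a path with two edges, and the facet is combinatorially a square. Proposition \ref{prop:cosmovertex} gives flip edges $(1-3\varepsilon)(e_2-e_1)$ and $(1-\varepsilon)(e_1-e_2)$, which are parallel but of different lengths. So the facet is a trapezoid and cannot be a product of two segments.
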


\begin{proof}
The face poset of $L({\Cosmo_n})$ is the opposite of the poset $M_n$ of Matryoshkas of the $(n+2)$-gon. The interval $[\widehat{0},(\Cosmo_n)_{\S}]$ corresponds to the subposet $(M_n)_{\geq{\S}}$ of Matryoshkas refining $\S$. Such a Matryoshka is obtained by making $k+1$ independent choices:

(i) refining each of the $k$ polygons $P_i$ internally with its own Matryoshka -- the ways of doing this are in order-reversing bijection with the faces of $\Cosmo_{n_i}$ --  and 

(ii) refining $\S$
 externally with compatible polygons that are unions of the $P_i$s -- the ways of doing this are in order-reversing bijection with the bracketings of $T(\S)$.

\noindent
 These independent choices are compatible with the order relations of the posets, giving the desired result.
\end{proof}
The above isomorphism is not, in general, a linear isomorphism of polyhedra. 
We can already see this in the cosmohedron $\Cosmo_3$.
Consider the subdivision of a hexagon by a long and a short diagonal, as illustrated in Figure \ref{fig:quadrilateral}. Proposition \ref{prop:factorization} predicts that this facet is combinatorially isomorphic to the product $T(P_2) \times \Cosmo_{2}  \times \Cosmo_{1} \times \Cosmo_{1} = | \times | \times \cdot \times \cdot$ which is a quadrilateral. We claim that this face is a trapezoid; we can see this in Figure \ref{fig:cosmohedron}, but let us verify it algebraically. We use Proposition \ref{prop:cosmovertex} to compute:
\begin{eqnarray*}
c_{M_1} = (1, 2, 6, 1) + \varepsilon(0,-3,3,0)& \qquad &  
 c_{N_1} = (2, 1, 6, 1) + \varepsilon(-3,0,3,0) \\
c_{M_2} = (2, 1, 6, 1)  + \varepsilon(-1,0,3,-2) & \qquad & 
 c_{N_2} = (1, 2, 6, 1) + \varepsilon(0,-1,3,-2) 
\end{eqnarray*}
and notice that the opposite sides
\[
c_{M_1} - c_{N_1} = (1-3 \varepsilon)(e_2-e_1), \qquad 
c_{M_2} - c_{N_2} = (1- \varepsilon)(e_1-e_2), \qquad 
\]
are parallel but not equal. Since it is not a parallelogram, this quadrilateral face cannot be expressed as a Cartesian product of two edges.

\begin{figure}[h]
\begin{center}
\includegraphics[width=4in]{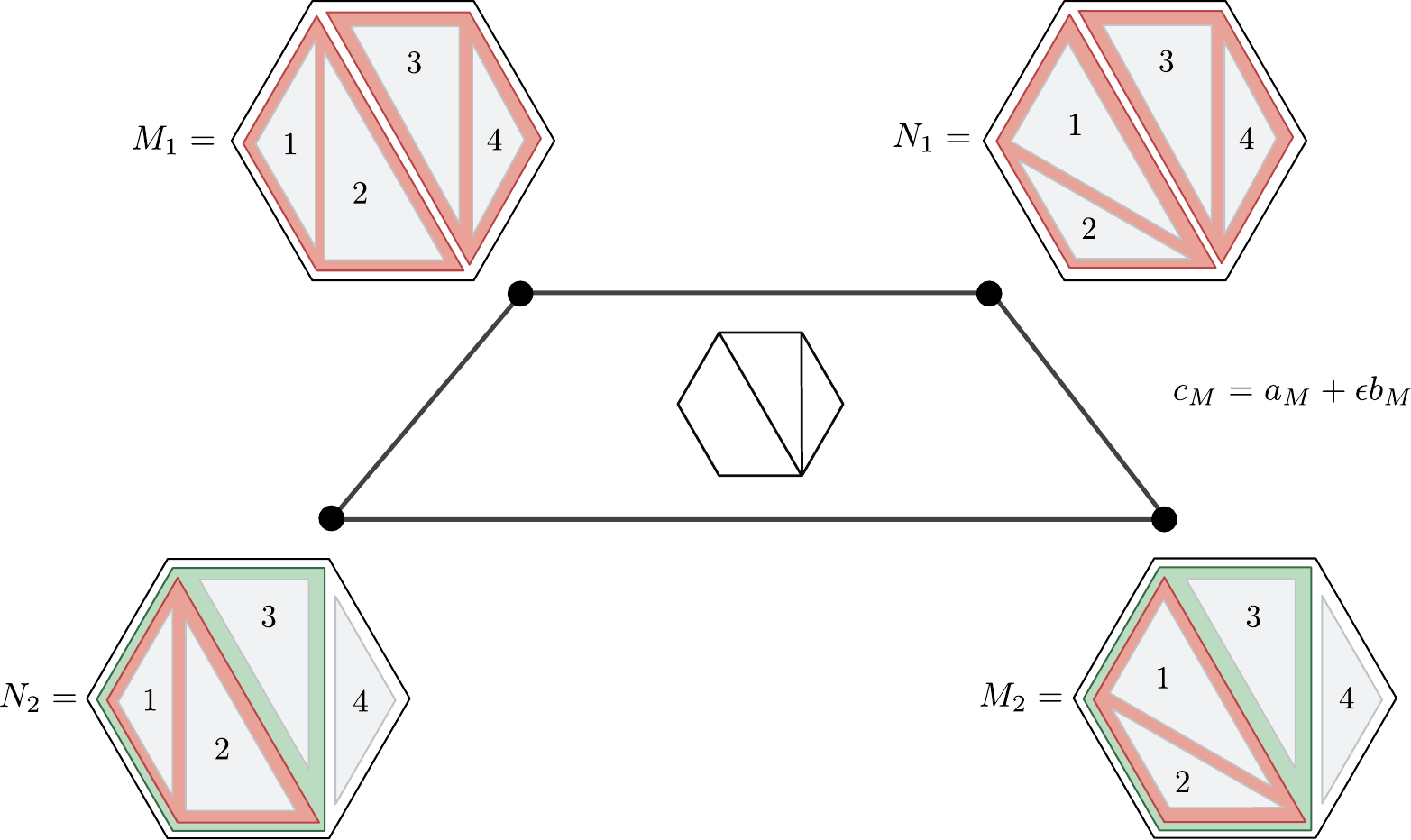}
\end{center}
\caption{A trapezoidal face of the cosmohedron.}
\label{fig:quadrilateral}
\end{figure}

\subsection{\textsf{Enumeration}}

Having understood the combinatorial structure of the cosmohedron, we are now ready to enumerate its faces. We also compute the face numbers of the \emph{correlatron} of \cite{Correlatron}, which are closely related in an elegant and surprising way.

\subsection{\textsf{Counting the faces of the cosmohedron}}

Our next goal is to compute the number of faces of the cosmohedron in each dimension.  We list these numbers in Table \ref{tab:cosmohedron} for $n \geq 6$.

\begin{table}[h]
\centering
\begin{tabular}{|c||c|c|c|c|c|c|c|c|}
\hline
 cosmohedron & 0 & 1 & 2 & 3 & 4 & 5 & 6 \\
\hline
\hline
0 & 1 &   &    &  &  &  &  \\
1 & 1 & 2 &  &  &  &  &  \\
2 & 1 & 10 & 10 &  &  &  &  \\
3 & 1 & 44  & 114 & 72 &  &  &  \\
4 & 1 & 196 & 952 & 1400 & 644 &  &   \\
5 & 1 & 902 & 7116 & 18040 & 18528 & 6704 &  \\
6 & 1 & 4278 & 50550 & 194616 & 332664 & 262728 & 78408 \\
\hline
\end{tabular}
\caption{The number of $d$-codimensional faces of the $n$th cosmohedron on $n+2$ points.}
\label{tab:cosmohedron}
\end{table}

It is useful to organize them in the $f$-polynomial $f_n(t) = f_{\AssocFan_{n-1}}(t)$ and the modified $f$-polynomial $g_n(t)=[(1+t)f_n(t)-1]/t$, whose coefficients are the consecutive pair sums of the coefficients of $f_n(t)$:
\begin{eqnarray*}
&& f_1(t)=1, \,\, f_2(t)=1+2t, \,\,f_3(t)=1+10t+10t^2, \,\,f_4(t)=1+44t+114t^2+72t^3, \ldots \\
&& g_1(t)=1, \,\,g_2(t)=3+2t, \,\,g_3(t)=11+20t+10t^2, \,\,g_4(t)=45+148t+186t^2+72t^3, \ldots. 
\end{eqnarray*}

\begin{theorem} \label{thm:cosmofvector} The
$f$-polynomials of the cosmohedral fans form the unique sequence $\{f_n(t)\}_{n \geq 1}$ of polynomials such that the power series in $(\R[t])[x]$
\begin{eqnarray*}
&& x - f_1(t)x^2 - f_2(t)x^3 - f_3(t)x^4 - \ldots \\
&& x + g_1(t)x^2 + g_2(t)x^3 + g_3(t)x^3 + \ldots 
\end{eqnarray*}
are compositional inverses, where  $g_n(t) = [(1+t)f_n(t) - 1]/t$. Equivalently, the power series $A_t(x) = x - f_1(t)x^2 - f_2(t)x^3  + \ldots$ satisfies
\[
A_t\left(\frac{(2t+1)x-(2t+2)x^2}{t(1-x)} - \left(1+\frac1t\right)A_t(x)\right) = x.
\]
\end{theorem}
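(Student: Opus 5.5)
We will prove the recursion for $f_n(t)$ directly from the ``outside-in'' recursive description of Matryoshkas, and then translate it into a statement about compositional inverses.

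\textbf{Step 1: the grading.} By Theorem \ref{thm:cosmofan3} and Lemma \ref{lem:dim2}, the cone of $M$ in $\R^n/\R\1$ has dimension $|M_{\text{not min}}|$, which is the codimension of the corresponding face of $\Cosmo_{n-1}$. So
\[
f_n(t)=\sum_{M} t^{|M_{\text{not min}}|},
\]
summed over all Matryoshkas of the $(n+2)$-gon. As a sanity check, $f_2=1+2t$: the trivial Matryoshka $\{P\}$ contributes $1$, and the two wrapped triangulations of the square each contribute $t$.

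\textbf{Step 2: the recursion.} By the recursive description in Section \ref{sec:Matryoshkas}, a Matryoshka is either $\{P\}$, or $P$ together with a nontrivial subdivision $\S$ and a Matryoshka on each part. In the second case $P$ is itself non-minimal, and the non-minimal polygons of the pieces add up. Hence
\[
f_n(t)=1+t\sum_{\S\neq\{P\}}\ \prod_{Q\in\S} f_{|Q|-2}(t),
\]
with the sum running over nontrivial subdivisions $\S$ of the $(n+2)$-gon.

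\textbf{Step 3: weighted subdivisions.} Assign weight $w_k=f_{k-2}(t)$ to each $k$-gon, and let
\[
U(x)=x+\sum_{n\ge1} u_n x^{n+1},\qquad u_n=\sum_{\S}\prod_{Q\in\S}w_{|Q|},
\]
where now $\S$ runs over \emph{all} subdivisions of the $(n+2)$-gon, including the trivial one. The standard root decomposition gives the functional equation
\[
U=x+\sum_{k\ge3} w_k\,U^{k-1}.
\]
To see this, take the polygon $Q$ of $\S$ containing the base edge $e$. If $Q$ is a $k$-gon, its $k-1$ other sides each either are a side of $P$ (the term $x$) or cut off a subpolygon carrying its own subdivision (a factor of $U$). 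The exponent of $x$ tracks the number of sides of $P$ minus one.

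Set $A_t(x)=x-\sum_{n\ge1} f_n x^{n+1}$. The functional equation then reads $A_t(U)=x$, so $U$ is the compositional inverse of $A_t$. Splitting off the trivial subdivision gives $u_n=f_n+(\text{nontrivial part})$, and Step 2 says the nontrivial part equals $(f_n-1)/t$. Therefore
\[
u_n=\frac{(1+t)f_n-1}{t}=g_n.
\]
This is exactly the claimed inverse pair. Note that $f_1=1$ and $g_1=1$, consistent with the triangle.

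\textbf{Step 4: uniqueness and the closed form.} In $A_t(U)=x$, the coefficient of $x^{n+1}$ is $g_n-f_n$ plus a polynomial in $f_1,\dots,f_{n-1}$. Since $g_n-f_n=(f_n-1)/t$ is affine in $f_n$ with nonzero slope, the $f_n$ are determined recursively, which gives uniqueness.

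For the functional equation, use
\[
\sum_{n\ge1} f_n x^{n+1}=x-A_t(x),\qquad \sum_{n\ge1} x^{n+1}=\frac{x^2}{1-x}.
\]
These give
\[
U=x+\frac{(1+t)(x-A_t(x))-\frac{x^2}{1-x}}{t},
\]
which simplifies to $\frac{(2t+1)x-(2t+2)x^2}{t(1-x)}-\left(1+\frac1t\right)A_t(x)$. Substituting into $A_t(U)=x$ yields the displayed identity.

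The main point needing care is Step 3: keeping the indexing consistent between $k$-gons and the powers of $x$, and checking that the root decomposition is a bijection. Once this is done, the remainder is bookkeeping with Lemma \ref{lem:dim2}.
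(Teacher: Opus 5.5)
Your proof is correct. Through Step 2 it matches the paper. Your recursion $f_n(t)=1+t\sum_{\S\neq\{P\}}\prod_{Q\in\S}f_{|Q|-2}(t)$ is exactly the paper's recursion. The paper gets it by grouping Matryoshkas according to their top-level subdivision, phrased there as a face $A$ of the associahedron with $f_A=f_{n_1}\cdots f_{n_k}$. You get it directly from the outside-in description.

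The routes diverge at the inversion step. The paper adds the trivial subdivision to obtain $g_n(t)=\sum_{A\le\Assoc_{n-1}}f_A(t)$. It then quotes Aguiar--Ardila's associahedral Lagrange inversion as a black box. You instead prove the needed statement directly from the root decomposition of weighted polygon dissections, $U=x+\sum_{k\ge 3}w_kU^{k-1}$. With $w_{n+2}=f_n(t)$ this is literally $A_t(U)=x$, and your $u_n$ is the paper's $\sum_{A}f_A$. You then identify $u_n=f_n+(f_n-1)/t=g_n$ using Step 2.

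This makes the argument self-contained. It also shows that the only cosmohedral input is the recursion. For arbitrary weights $w_k$, the series $x-\sum_k w_kx^{k-1}$ and the weighted dissection series are compositional inverses. The cosmohedron enters only when identifying the dissection series with $x+\sum g_nx^{n+1}$.

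You also spell out two things the paper leaves as a ``straightforward computation.'' First, uniqueness: the coefficient of $x^{n+1}$ in $A_t(U)$ is $(f_n-1)/t$ plus terms in $f_1,\dots,f_{n-1}$. Second, the algebra giving the closed functional equation. What the paper's citation buys instead is placing the identity inside the Hopf-algebraic structure of Loday's associahedra. That is what prompts the paper's question about a deeper algebraic explanation.
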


\begin{proof}
Every face $F$ of the cosmohedral fan is contained in a unique minimal face of the associahedral fan $A(F)$. If $F$ corresponds to a Matryoshka $M$ on an $(n+2)$-gon then $A(F)$ corresponds to the subdivision $M_{max}$ given by the maximal polygons $P_1, \ldots, P_k$ of $M$. 
If $P_i$ is an $(n_i+2)$-gon then $n = n_1 + \cdots + n_k$.

Conversely, consider a face $A$ of the associahedral fan, corresponding to a non-trivial subdivision $\S$ of the $(n+2)$-gon into an $(n_i+2)$-gon $P_i$ for $1 \leq i \leq k$.
The faces $F$ of the cosmohedron such that $A(F)=A$ correspond to the Matryoshkas with $M_{max} = \S$; these are obtained by choosing arbitrary Matryoshkas $M_1, \ldots, M_k$  on $P_1, \ldots, P_k$, that is, faces $F_1, \ldots, F_k$ on the corresponding cosmohedral fans. Furthermore, we have
\[
\dim F = 1+|M_{\text{not min}}| 
= 1+\sum_{i=1}^k (|(M_i)_{\text{not min}}| +1) = 1+ \sum_{i=1}^k \dim F_i.
\]
so the $f$-numbers of the cosmohedron $\Cosmo_{n-1}$ are given by the recurrence
\begin{equation} \label{eq:f}
f_n(t) = \sum_{F \leq \Cosmo_{n-1}} t^{\codim F} =
1 + t \sum_{A \lneq \Assoc_{n-1}} f_{n_1}(t) \cdots f_{n_k}(t).
\end{equation}
Adding the full face $\Assoc_{n-1}$ to both sides, we obtain
\begin{equation} \label{eq:fg}
g_{n}(t) =  \sum_{A \leq \Assoc_{n-1}} f_A(t)
\end{equation}
using the notation of \cite{AguiarArdila}: if $x_1, x_2, \ldots, $ is a sequence and a face $A$ of the associahedron $\Assoc_{n-1}$ is combinatorially a product of associahedra $\Assoc_{n_1-1} \times \cdots \Assoc_{n_k-1}$, then we write $x_A = x_{n_1}\cdots x_{n_k}$.

Arriving at \eqref{eq:fg} is a pleasant d\'ej\`a vu: the associahedral version of Lagrange inversion obtained in \cite[Theorem 2.4.3]{AguiarArdila} tells us that this recurrence is precisely equivalent to the assertion that $x - f_1(t)x^2 - f_2(t)x^3 -  \ldots$ and $ x + g_1(t)x^2 + g_2(t)x^3 + \ldots$ are compositional inverses. 

The second formulation follows from the first by a straightforward computation.
\end{proof}

A pleasant feature of this proof is that Aguiar and Ardila discovered the associahedral version of Lagrange inversion in 
\cite[Theorem 2.4.3]{AguiarArdila} within the Hopf-algebraic structure of Loday's associahedra, specifically. Is it coincidental that this is the same associahedron we constructed? Might there be a deeper underlying algebraic explanation?
Cosmohedra feature an operadic structure that is the subject of an upcoming paper.

\begin{corollary}
The $f$-polynomials of the cosmohedral fans are given by the recurrence $f_1(t)=1$ and 
\[
f_n(t) = 1 + t \sum_{1^{a_1}2^{a_2}\cdots \vdash n} \frac{(n+a_1+a_2+\cdots)!}{(n+1)!} \cdot  \frac{f_1(t)^{a_1}}{a_1!} \frac{f_2(t)^{a_2}}{a_2!} \cdots \qquad \text{for } n \geq 2.
\]
summing over the non-trivial partitions of $n$ as a sum $n=a_1 \cdot 1 + a_2 \cdot 2 + \cdots + a_{n-1} \cdot(n-1)$.
\end{corollary}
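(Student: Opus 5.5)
The plan is to derive the formula directly from the recurrence \eqref{eq:f} in the proof of Theorem \ref{thm:cosmofvector}:
\[
f_n(t) = 1 + t \sum_{A \lneq \Assoc_{n-1}} f_{n_1}(t)\cdots f_{n_k}(t).
\]
Everything then reduces to counting faces of the associahedron by the multiset of polygon sizes. Each proper face $A$ corresponds to a non-trivial subdivision $\S$ of the $(n+2)$-gon into polygons $P_1,\dots,P_k$. Here $P_i$ is an $(n_i+2)$-gon, $n=n_1+\cdots+n_k$, and $k\geq 2$, which is exactly the condition that the partition is non-trivial. We group these subdivisions according to the partition $1^{a_1}2^{a_2}\cdots$ of $n$ formed by the $n_i$. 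The corollary then follows once we show the following claim: the number of subdivisions of an $(n+2)$-gon with exactly $a_j$ parts that are $(j+2)$-gons is
\[
\frac{(n+a_1+a_2+\cdots)!}{(n+1)!\,a_1!\,a_2!\cdots}.
\]

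To prove the claim, I would pass to dual trees. Subdivisions of the $(n+2)$-gon rooted at the top edge $e$ correspond bijectively to plane trees with $n+1$ leaves. In this correspondence a $(j+2)$-gon becomes an internal node with $j+1$ children. With $k=\sum a_j$ internal nodes, the total number of vertices is $N=(n+1)+k$; this is consistent because the child counts sum to $\sum_i (n_i+1)=n+k=N-1$. The classical count of plane trees with prescribed numbers $m_d$ of nodes of outdegree $d$ (including leaves as $d=0$) is
\[
\frac{1}{N}\binom{N}{m_0,m_1,m_2,\ldots},
\]
proved by the cycle lemma or by Lagrange inversion. Substituting $m_0=n+1$ and $m_{j+1}=a_j$ gives
\[
\frac{(N-1)!}{(n+1)!\prod a_j!} = \frac{(n+k)!}{(n+1)!\prod a_j!},
\]
which is the claimed number.

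As an alternative that stays inside the paper, one can apply Lagrange inversion to Theorem \ref{thm:cosmofvector}. The identity \eqref{eq:fg} comes from the same face sum with multinomial coefficients, and extracting coefficients from $x+\sum g_n x^{n+1}$ as the inverse of $x-\sum f_n x^{n+1}$ gives the same multinomial formula. I would mention this route only as a consistency check.

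The main obstacle is the bookkeeping of the bijection with plane trees, especially the root. The root polygon is adjacent to $e$, and its node's children are the remaining sides, so the outdegree $j+1$ must hold uniformly, including for the root. Once this is verified, the rest is substitution into \eqref{eq:f}. A quick sanity check is $n=2$: the only non-trivial partition is $1^2$, with coefficient $3!/(3!\,2!)\cdot 1=\tfrac12$, which looks wrong at first glance. Recomputing: $k=2$ gives $(n+k)!=4!=24$, and $24/(3!\cdot 2!)=2$, so $f_2=1+2t$, matching the table.
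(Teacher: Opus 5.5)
Your proposal is correct and is essentially the paper's proof: you substitute into the recurrence \eqref{eq:f} the number $\frac{(n+k)!}{(n+1)!\prod_j a_j!}$ of subdivisions with $a_j$ parts that are $(j+2)$-gons. The paper simply cites this count from \cite[Theorem 5.3.10]{EC2}, and your derivation via dual plane trees with $m_0=n+1$ and $m_{j+1}=a_j$ just unpacks that citation. The only blemish is the false start in your $n=2$ check, where the numerator should be $(n+k)!=4!$ rather than $3!$; you already correct it, so simply delete the wrong line.
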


\begin{proof}
This is a reformulation of the recurrence formula \eqref{eq:f}, taking into account that the number of subdivisions of an $(n+2)$-gon into $a_2$ triangles, $a_3$ quadrilaterals, etc. is $(n+a_1+a_2+\cdots)! / (n+1)!a_1!a_2! \cdots$ \cite[Theorem 5.3.10]{EC2}.
\end{proof}

\subsection{\textsf{Asymptotic growth}}
\noindent \textbf{\textsf{Facets.}}
The facets of the cosmohedron $\Cosmo_{n-1}$ are in bijection with the non-trivial subdivisions of an $(n+2)$-gon, or equivalently the non-trivial parenthesizations of a string of $n+1$ letters.
Their number, listed in column 1 of Table \ref{tab:cosmohedron}, is one less than the $n$-th \emph{Hipparchus--Schr\"oder number}. This is one of the oldest known combinatorial quantities, studied by astronomer and mathematician Hipparchus in the second century B.C.; for a fascinating historical account, see \cite{EC2, StanleyHipparchus}. 
These numbers are sequence A001003 in the Online Encyclopedia of Integer Sequences; their generating function is 
\[
x+x^2+3x^3+11x^4+45x^5+197x^6+\cdots = \frac14\left(1+x-\sqrt{1-6x+x^2}\right)
\]
The asymptotic growth of the sequence is controlled by $3 + \sqrt8$, the inverse of the smallest root of $1-6x+x^2$. More precisely \cite{Knuth}, the number $r_n$ of rays grows like 
\[
r_n \sim \frac14\sqrt{(\sqrt{18}-4)/\pi}  \cdot (3+\sqrt8)^n \cdot n^{-3/2}.
\]

\bigskip

\noindent \textbf{\textsf{Vertices.}}
The vertices of the cosmohedron $\Cosmo_{n-1}$ are in bijection with the maximal Matryoshkas on an $(n+2)$-gon.
The following lemma is stated in \cite{AFV} with a minor typo; we include a proof.
\begin{lemma}
The number $m_n$ of maximal Matryoshkas on  an $(n+2)$-gon is given by the recurrence relation
\[
m_1 = 1,  \qquad m_n = \sum_{k=1}^{n-1} (k+1)m_{k}m_{n-k} \text{ for } n \geq 2.
\]
The formal power series $M(x) = \sum_{n \geq 2} m_{n-1}x^n = x^2 + 2x^3 + 10x^4+72x^5+\cdots$ is given by the differential equation 
\[
M'(x) = 1 - \frac{x^2}{M(x)}.
\]
\end{lemma}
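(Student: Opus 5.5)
The proof has two parts: a combinatorial decomposition at the outermost level, which gives the recurrence, and a generating-function translation, which gives the differential equation.

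\textbf{The recurrence.} A Matryoshka $M$ on the $(n+2)$-gon $P$ with $n \geq 2$ is maximal exactly when it is the recursive construction of Section \ref{sec:Matryoshkas} applied with the coarsest nontrivial subdivision at every stage.
\begin{enumerate}[label=(\roman*)]
\item The top polygon $P$ is split by a single diagonal $d$ into two polygons $Q$ and $R$. If the subdivision under $P$ had three or more parts, wrapping two adjacent parts into a larger polygon would give a strictly larger Matryoshka.
\item The Matryoshka restricted to $Q$ and to $R$ must itself be maximal on those polygons.
\end{enumerate}
Both facts can be read off from Lemma \ref{lem:dim2}: maximal Matryoshkas are those whose containment poset is a binary tree. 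So maximal Matryoshkas on $P$ are in bijection with triples $(d, M_Q, M_R)$, where $d$ is a diagonal and $M_Q, M_R$ are maximal Matryoshkas on the two sides of $d$.

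If $Q$ is a $(k+2)$-gon and $R$ an $(n-k+2)$-gon, this triple count contributes $m_k m_{n-k}$ for each such diagonal. For every ordered split $(k, n-k)$ with $1 \leq k \leq n-1$, there are exactly $n+2$ ordered pairs (diagonal, choice of which side is the $(k+2)$-gon). So the number of maximal Matryoshkas is
\[
m_n = \frac{n+2}{2}\sum_{k=1}^{n-1} m_k m_{n-k}.
\]
This correctly handles the symmetric case $k = n-k$, where there are $(n+2)/2$ such diagonals. Since the summand is symmetric under $k \leftrightarrow n-k$ and $(k+1) + (n-k+1) = n+2$, this equals $\sum_{k}(k+1)\, m_k m_{n-k}$, as claimed. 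The case $m_1 = 1$ (the triangle) is immediate.

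\textbf{The differential equation.} Write $M(x) = \sum_{j \geq 1} m_j x^{j+1}$, so $M'(x) = \sum_k (k+1) m_k x^k$. The coefficient of $x^{n+1}$ in $M(x)M'(x)$ is
\[
\sum_{k=1}^{n-1} (k+1)\, m_k\, m_{n-k},
\]
pairing the $x^k$ term of $M'$ with the $x^{n-k+1}$ term of $M$. By the recurrence this equals $m_n$ for $n \geq 2$. The product $MM'$ has no terms of degree below $3$, while $M$ has leading term $x^2$. Hence
\[
M = x^2 + MM',
\]
which is Theorem \ref{thm:D-algebraic}. Dividing by $M$ (invertible in Laurent series, since its leading term is $x^2$) gives $M' = 1 - x^2/M$.

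\textbf{Main obstacle and checks.} The main point needing care is the justification of (i) and (ii), that is, that maximality forces a binary split at every level. The symmetrization step turning $(n+2)/2$ into the weight $(k+1)$ also needs to be done carefully. As sanity checks, the formula gives $m_2 = 2$ and $m_3 = 4 + 6 = 10$, matching the direct counts for the square and the pentagon (five diagonals, each contributing $m_1 m_2 = 2$).
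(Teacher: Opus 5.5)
Your proof is correct and uses the same decomposition as the paper: a maximal Matryoshka on the $(n+2)$-gon is a single diagonal together with maximal Matryoshkas on its two sides, and the differential equation comes from matching the coefficient of $x^{n+1}$ in $MM'$. The only difference is bookkeeping. The paper fixes the base edge $e$ and notes that exactly $k+1$ diagonals leave a $(k+2)$-gon on the side of $e$, which gives the weight $k+1$ directly with no symmetrization; you instead count all $n+2$ (diagonal, side) pairs to get the equivalent symmetric form $m_n=\frac{n+2}{2}\sum_k m_k m_{n-k}$, which incidentally reads directly as $M-x^2=\frac12 (M^2)'$.
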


\begin{proof}
Fix an edge $e$ of the $(n+2)$-gon. 
A maximal Matryoshka is obtained by splitting an $(n+2)$-gon into a $(k+2)$-gon that contains $e$ and an $(n-k+2)$-gon, and then putting a maximal Matryoshka in each one of them. There are $k+1$ possible positions for the outer $(k+2)$-gon that contains $e$, and $m_k$ and $m_{n-k}$ choices for the two inner Matryoshkas, respectively. The differential equation follows by a straightforward computation. 
\end{proof}

In particular, this means that $M(x)$ is a $D$-algebraic series; that is, its derivatives satisfy a polynomial equation with coefficients in $\mathbb{C}[x]$; in fact, one of the simplest such equations: $M(x) = x^2 + M(x)M'(x)$. Compare this with the algebraic power series for Catalan numbers, which satisfies one of the simplest algebraic equations $C(x) = 1+xC(x)^2$.

Melczer \cite{Melczer} writes that ``\emph{$D$-algebraic series seem to be on the border between decidability and undecidability,
perhaps tipped to the side of undecidability. [...] Compared to D-finite functions, not much is
known about $D$-algebraic asymptotics, and in general not much can be said.}". Since   $M(x)$ is one of the simplest $D$-algebraic series, the following conjecture of Kotesovic, based on numerical evidence, is an interesting challenge.

\begin{conjecture}\cite{OEIS}\label{conj:asymptotic} The number of Matryoshkas grows asymptotically like 
\[
m_n \sim  c \cdot n!  \cdot  n^4
\]
for $c = 0.005428\ldots$. 
\end{conjecture}

\bigskip

\noindent
\textbf{\textsf{All faces.}}
A more ambitious goal would be to find the asymptotics for the total number of faces,  or even the limit distribution of the normalized $f$-vector. We do not know how to do this.

\subsection{\textsf{Counting the faces of the  correlatron}}

Another object that naturally arises in physics, and that is intimately related to the cosmohedron, is the \emph{correlatron} introduced by Figueiredo and Vaz\~ao in \cite{Correlatron}. This polytope captures the combinatorics of the correlator, a physical observable that can be computed from the wavefunction. The combinatorial structure of this object is described by the interacting combinatorics of chords (the combinatorics of associahedra) and subpolygons (the combinatorics of cosmohedra). 

\begin{figure}[h]
    \centering    
    \includegraphics[width=0.8\linewidth]{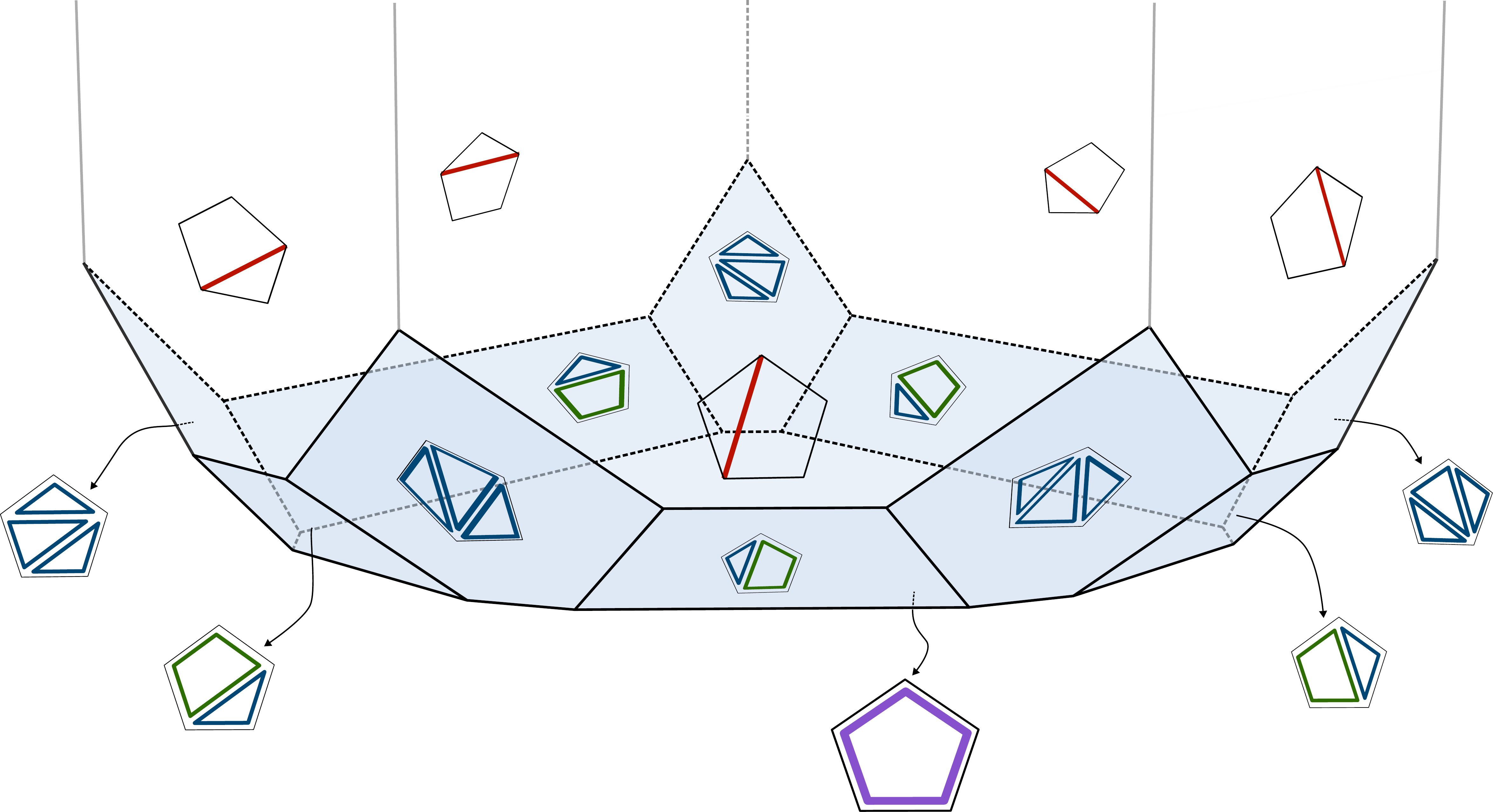}
    \caption{The three-dimensional correlatron.}
    \label{fig:correlatron}
\end{figure}

The correlatron is an unbounded $n$-dimensional polyhedron, with faces of two kinds:

1. The bounded faces of the correlatron are labelled by the compatible pairs $(\C,M)$, consisting of a set $\C$ of non-crossing chords and a Matryoshka $M$ on an $(n+2)$-gon. Here $\C$ and $M$ are \emph{compatible} if no chord in $\C$ intersects a polygon in $M$; or equivalently, if $\C$ is a subset of the diagonals in the underlying subdivision $M_{\text{max}}$ of $M$. The bounded face inclusions of the correlatron are given by the reverse inclusion of chords and Matryoshkas:
\begin{equation*}
    \{\mathcal{C}^\prime, M^\prime \} \text{ is a face of }\{\mathcal{C},M\} \text{ in the correlatron if and only if  } \mathcal{C} \subseteq \mathcal{C}^\prime \text{ and } M \leq M^\prime.
\end{equation*}

2. The unbounded faces are labeled by chords $\underline{\C}$, which we underline to distinguish them easily from the earlier faces. An unbounded face $\underline{\C}$ is contained in $\underline{\C'}$ if and only if $\C \subseteq \C'$. An unbounded face $\underline{\C}$ also contains the bounded faces $(\C',M')$ such that $\C \subseteq \C'$.

Note in particular that the correlatron has a bottom cosmohedral facet $\Cosmo_{n-1}$ corresponding to the pairs of the form $(\emptyset, M)$. Its unbounded directions look like a cone over  $\Assoc_{n-1}$.
Since the unbounded faces of the correlatron are enumerated by the well-known $f$-vector of the associahedron, we focus on enumerating the bounded part.

\begin{table}[h]
\centering
\begin{tabular}{|c||c|c|c|c|c|c|c|c|}
\hline
 correlatron & 0 & 1 & 2 & 3 & 4 & 5 & 6 \\
\hline
\hline
0 & 0 &  &   &  &  &  &  \\
1 & 0 & 1 &  &  &  &  &  \\
2 & 0 & 3 & 4 &  &  &  &  \\
3 & 0 & 11  & 35 & 25 &  &  &  \\
4 & 0 & 45 & 251 & 405 & 200 &  &   \\
5 & 0 & 197 & 1694 & 4592 & 4984 & 1890 &  \\
6 & 0 & 903 & 11158 & 44932 & 80036 & 65606 & 20248 \\
\hline
\end{tabular}
\caption{The number of $d$-codimensional faces of the $n$th bounded correlatron.}
\label{tab:correlatron}
\end{table}

\begin{theorem} \label{thm:correlafvector}
The reverse $f$-polynomials  of the bounded correlatrons $h_1(t)=t, h_2(t)=3t+4t^2, h_3(t)=11t+35t^2+25t^3, \ldots$
form the unique sequence $\{h_n(t)\}_{n \geq 1}$ of polynonmials such that the power series in $(\R[t])[x]$
\begin{eqnarray*}
&& x + h_1(t)x^2 + h_2(t)x^3 + h_3(t)x^4 + \ldots \\
&& x - tg_1(t)x^2 - tg_2(t)x^3 - tg_3(t)x^3 + \ldots 
\end{eqnarray*}
are compositional inverses, where  $g_n(t) = [(1+t)f_n(t) - 1]/t$ and $f_n$ is the $f$-polynomial of the $n$th cosmohedral fan.
Equivalently, the power series $H_t(x) = x + h_1(t)x^2 + h_2(t)x^3 + \ldots$ satisfies
\[
H_t\left(\frac{(t+1)x^2-tx}{1-x} + (t+1)A_t(x)\right) = x
\]
where $A_t(x)$ is the generating function for the $f$-vectors of cosmohedra in Theorem \ref{thm:cosmofvector}. 
\end{theorem}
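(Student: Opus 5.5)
The plan is to reduce the statement to the associahedral Lagrange inversion of \cite[Theorem 2.4.3]{AguiarArdila}, exactly as in the proof of Theorem \ref{thm:cosmofvector}. That result says: two series $x - a_1x^2 - a_2x^3 - \cdots$ and $x + h_1x^2 + h_2x^3+\cdots$ are compositional inverses if and only if $h_n = \sum_{A \leq \Assoc_{n-1}} a_A$ for all $n$. Here the sum is over all faces $A$ of the associahedron, including the full face, that is, over all subdivisions $\S$ of the $(n+2)$-gon into an $(n_1+2)$-gon, \ldots, an $(n_k+2)$-gon, and $a_A = a_{n_1}\cdots a_{n_k}$. Taking $a_m = t\,g_m(t)$, the first statement is equivalent to the recurrence
\[
h_n(t) = \sum_{\S} \prod_{i=1}^k t\, g_{n_i}(t) = \sum_{\S} t^{k} \prod_{i=1}^k \sum_{A_i \leq \Assoc_{n_i-1}} f_{A_i}(t),
\]
where the second equality uses \eqref{eq:fg}. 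As a sanity check, for $n=2$ this gives $h_2 = t g_2 + t^2 g_1^2 = 3t+4t^2$, as in Table \ref{tab:correlatron}. The second formulation, involving $A_t$, will then follow by a routine substitution: it combines $\sum_m g_m x^{m+1} = \bigl((1+t)(x - A_t(x)) - x^2/(1-x)\bigr)/t$ with $x - \sum_m t g_m x^{m+1}$.

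The combinatorial core is to prove this recurrence by decomposing the bounded faces $(\C,M)$ and weighting each by $t^{\codim}$. I would first pin down the codimension. From the small cases (for $n=2$ the vertices are the two maximal $(\emptyset,M)$ and the two pairs $(\{c\},\text{two triangles})$), the face $(\C,M)$ should have codimension $1 + |\C| + (\text{number of non-minimal polygons of } M \text{ not crossing } \C)$, or an equivalent expression. I would verify this against the face-inclusion rule $\C\subseteq\C'$, $M\le M'$ and against the rows $n=2,3$ of the table before proceeding.

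Next I would set up the following bijection. Given $(\C,M)$, let $\S$ be the subdivision formed by the maximal polygons of $M$ that do not cross any chord of $\C$. Inside each piece $P_i$ of $\S$, record the induced chords and the induced Matryoshka. Then check that the data inside $P_i$ is exactly a pair consisting of a subdivision $A_i$ of $P_i$ together with a Matryoshka on each part of $A_i$, i.e.\ one term of $g_{n_i} = \sum_{A_i} f_{A_i}$. Via \eqref{eq:f}, the factor $(1+t)$ in $g = ((1+t)f-1)/t$ should correspond to the choice of whether or not the outer polygon $P_i$ is wrapped. Finally, check that the codimension is additive: one factor of $t$ for each piece of $\S$, plus the $f$-weights inside the pieces.

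The main obstacle is this bijection and the bookkeeping of codimensions, since polygons of $M$ may contain chords of $\C$ in their interior (in the square, the pair $(\{c\}, \{\text{two triangles}, \text{square}\})$ is an edge). I would first try the decomposition by maximal non-crossing polygons described above, and test it on $n=3$ against $11t+35t^2+25t^3$. If it fails, I would instead group by $M_{\max}$ and use the already established recurrence \eqref{eq:f} to absorb the chord choices into the factors $(1+t)f_{n_i}$.
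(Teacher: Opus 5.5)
Your reduction is the paper's own. Applying Aguiar--Ardila's associahedral Lagrange inversion with $a_m=t\,g_m(t)$ turns the statement into $h_n(t)=\sum_{\S}\prod_{i=1}^k t\,g_{n_i}(t)$, and your derivation of the second formulation is correct. (One small slip: the square has two triangulations, so $h_2=tg_2+2t^2g_1^2$.)

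The gap is the combinatorial core. You leave it open, and the bijection you propose cannot work, because it rests on a misreading of compatibility. In the paper, $(\C,M)$ is compatible when no chord of $\C$ crosses a polygon of $M$, i.e.\ $\C$ consists of diagonals of the top-level subdivision $M_{\max}$. In particular $M$ need not contain the whole polygon. So $(\{c\},\{\text{two triangles},\text{square}\})$ is not a face at all. The edge joining the vertices $(\{c\},\{\text{two triangles}\})$ and $(\emptyset,\{\text{two triangles},\text{square}\})$ is $(\emptyset,\{\text{two triangles}\})$, since faces grow as $\C$ and $M$ shrink. Read this way, your codimension formula is just $\codim(\C,M)=1+|\C|+|M_{\text{non-min}}|$, which is correct.

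But then ``the maximal polygons of $M$ not crossing $\C$'' is simply $M_{\max}$, and this choice fails in three ways:
\begin{itemize}
\item By construction, no chord of $\C$ lies inside any of its pieces.
\item Every piece belongs to $M$, so no wrapping choice exists for it.
\item The data inside a piece is a Matryoshka containing it, which is a term of $f_{n_i}$, not of $g_{n_i}$.
\end{itemize}
The chords of $\C$ then sit among the diagonals of $M_{\max}$ and contribute an outer factor $t(1+t)^{k-1}$, not $t^k$. So neither the per-piece weight nor the outer weight matches $\prod_i t\,g_{n_i}$.

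The missing idea is to cut along $\C$ instead of along $M_{\max}$. Suppose $\C$ divides the polygon into $P_1,\dots,P_k$ with $k=|\C|+1$. Then $M_{\max}$ refines $\{P_1,\dots,P_k\}$, so inside each $P_i$ a face records two things:
\begin{itemize}
\item a subdivision $A_i$ of $P_i$, namely the polygons of $M_{\max}$ lying in $P_i$;
\item a Matryoshka on each part of $A_i$.
\end{itemize}
Conversely, any such data gives a compatible pair. Since $\codim=k+{}$(non-minimal polygons inside the parts), summing yields $\prod_i t\sum_{A_i}f_{A_i}=\prod_i t\,g_{n_i}$ by \eqref{eq:fg}, which is exactly the recurrence you need.

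The paper packages the same data differently: a Matryoshka $M_i$ on $P_i$ plus a bit recording whether $P_i\in M$, forced when $M_i=\{P_i\}$. This gives $1+(1+t^{-1})(f_{n_i}-1)=g_{n_i}$ per piece. That is where your ``wrapping'' factor actually lives: on the pieces cut out by $\C$, not on $M_{\max}$.

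Your fallback of grouping by $M_{\max}$ does produce a true formula, $h_n=\sum_{\S'}t(1+t)^{k'-1}f_{\S'}$. To reach the recurrence, you must still re-sum over the coarsenings $\S$ of $\S'$, i.e.\ the subsets of its chords, which are exactly the possible $\C$. That exchange of summation, not an absorption into factors $(1+t)f_{n_i}$, is what closes the argument.
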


\begin{proof}
Every face $F$ of the truncated correlatron corresponds to a pair $(\C,M)$ where $M$ is non-trivial. Let $\C$ create a subdivision into polygons $P_1, \ldots, P_k$, and let $A(F)$ be the corresponding face of the associahedal fan corresponding to the set of chords $\C$.
A Matryoshka $M$ compatible with $\C$ restricts to Matryoshkas $M_1, \ldots, M_k$ on $P_1, \ldots, P_k$. 
One can almost recover $M$ from the $M_i$s: the only additional information is whether $M$ contains the polygon $P_i$ (in which case we let $\varepsilon_i=1$) or it does not (in which case $\varepsilon_i=0$). We note that if $M_i$ is the trivial Matryoshka on $P_i$, then $\varepsilon_i=1$ necessarily; if $M_i$ is non-trivial, then $\varepsilon_i$ can be chosen freely. Finally notice that
\[
\codim F = |\C| + |M_{\text{non-min}}| = k + \sum_{i=1}^k |(P_i)_{\text{non-min}}| + \sum_{i=1}^k (\varepsilon_i - 1)
\]
Therefore
\begin{eqnarray*}
h_n(t) &=& \sum_{F \leq \BdCorrela_n} t^{\codim F} \\
&=& 
\sum_{\substack{A \leq \Assoc_n \\ \S(A) = \{P_1, \ldots, P_k\}}}
t^{k} \sum_{\substack{M_1, \ldots, M_k \\ \varepsilon_1, \ldots, \varepsilon_k}} \left(\prod_{i=1}^k t^{|(P_i)_{\text{non-min}}|  + \varepsilon_i -1}\right) \\
&=& \sum_{A \leq \Assoc_n} 
 t^{|\C|} \prod_{i=1}^k \left(1 + \sum_{M_i \neq \{P_i\}} t^{|(P_i)_{\text{non-min}}|}\left(1+\frac1t\right)\right) \\
&=& \sum_{A \leq \Assoc_n} 
t^{|\C|} \prod_{i=1}^k \left(1 + (f_{P_i}(t)-1)\left(1+\frac1t\right)\right) \\
&=& \sum_{A \leq \Assoc_n} 
t \, \prod_{i=1}^k \left((1 + t) f_{P_i}(t)-1\right) \\
&=& \sum_{A \leq \Assoc_n} t g_A(t),
\end{eqnarray*}
again using the notation of \cite{AguiarArdila}. The associahedral version of Lagrange inversion \cite[Theorem 2.4.3]{AguiarArdila} then tells us  that $x - tg_1(t)x^2 - tg_2(t)x^3 - tg_3(t)x^4 + \ldots$ and $ x + h_1(t)x^2 + h_2(t)x^3 + h_3(t)x^3 + \ldots$ are compositional inverses. 

The second formulation follows from a straightforward computation.
\end{proof}

Note the remarkable similarity between the formulas in Theorems \ref{thm:cosmofvector} and \ref{thm:correlafvector}
for the $f$-vectors of the cosmohedron and bounded correlatron. Is this an indication of something deeper?

\section{\textsf{Elusive qualities of the cosmohedron} \label{sec:subtleties}}

The cosmohedron is related to several beautiful polytopes in the literature, either through direct connections or indirect analogies. These include the permutahedron \cite{BooleStott, Schoute}, the associahedron \cite{Loday, Stasheff}, graph associahedra \cite{CarrDevadoss}, the nested permutahedron \cite{CastilloLiu}, the permutoassociahedron \cite{CastilloLiu2}, and the
permutonestohedron \cite{Gaiffi},
among others. 

However, there are numerous ways in which the combinatorial structure of the cosmohedron is more subtle than its predecessors. In turn, this makes it significantly harder to prove the correctness of this construction. In this section, we discuss some of the intricacies that arise -- and that remain in future polytopes of interest.

\bigskip
\noindent
\textsf{\textbf{The cosmohedron is not very symmetric.}}
The permutahedron, the nested permutahedron, the permuto-associahedron, and the permutonestohedron are symmetric under the natural action of $S_n$ onto $\R^n$. This allows us to express them as the convex hull of the $S_n$-orbit of a point, or of a carefully placed permutahedron or associahedron, respectively. The cosmohedron only has a natural action of the dihedral group. 

The cosmohedron is made of an associahedron's worth of different bracket associahedra. It requires much greater care to choose a suitable realization of each bracket associahedron that are compatible with each other, and then place each one of them at the correct position, in order to make sure that they glue correctly into a well-behaved global polytope. This is a subtle matter.

\bigskip
\noindent
\textsf{\textbf{The cosmohedron is not simplicial.}} This is not a surprise; none of the polytopes on the above list are simplicial. 
Proposition \ref{prop:factorization} shows that the only simplicial faces of the cosmohedron are the vertices and edges.

\bigskip
\noindent
\textsf{\textbf{The cosmohedron is not simple.}} 
This is more surprising, because all the other polytopes in the list above are simple.
The only simple vertices of the cosmohedron are those whose Matryoshka is nested linearly. To see this, let $M=(T,B)$ be a maximal Matryoshka.
The facets of $\cone(M)$ correspond to the facets of $\cone_T(B)_{\geq 0}$. These in turn correspond to the $n-2$ facets of $\cone_T(B)$ (which is simplicial), and the facets of the form $y_e \geq 0$ for a minimal bracket in $B$, which is an edge in $B$, and corresponds to a quadrilateral in $M$. Therefore 
\[
(\text{number of facets of } \cone(M)) = n - 2 + (\text{number of quadrilaterals in } M).
\]
This can be any number between $n-1$ and $\lfloor 3n/2 \rfloor - 2 $. It equals $n-1$ precisely if $M$ has exactly one quadrilateral, which happens precisely when $M$ is linearly nested. 

\textsf{Why is this a challenge?}
The fact that polytopes like (bracket) associahedra are simple allows for a flexible construction. One can start with an easy polytope (in these cases a simplex), and then create new facets one at a time by making cuts that are ``not too deep", so that the facets meet transversally. The required care only involves depth inequalities; see \cite{Loday, Devadoss, PPP}.

We \textbf{do} construct the cosmohedron through a chiseling of faces of the associahedron, but we have to be much more careful now: the chiseling has to line up perfectly to create the required non-transversal crossings. The required care now involves many equalities as well as inequalities. In $\AFV_{n-1}$, these equalities manifest in the fact that the linear forms $\sum_{C \in \C}X_C$ satisfy many linear relations between them. 
In $\Cosmo_{n-1}$, they manifest in the many  requirements $b_B = b_{B'}$ in Remark \ref{rem:b=b}.

\bigskip 
\noindent
\textsf{\textbf{The cosmohedron depends delicately on the chiseling.}}
To further illustrate the point above, recall that in our construction of the cosmohedron, we chiseled a Devadoss bracket associahedron at each vertex of the associahedron. This choice is important. For instance, despite their usefulness in other settings, the Postnikov bracket associahedra do not produce the cosmohedron upon chiseling.

\bigskip
\noindent
\textsf{\textbf{We don't know the cosmohedron to be a deformation of an easy polytope.}} 
The \emph{deformations of the permutahedron}, also known as \emph{generalized permutahedra} or \emph{polymatroids}, are obtained from the permutahedron by moving facets without pushing them past vertices. They are characterized by having the same edge directions as the permutahedron, namely  $e_i - e_j$ for $i \neq j$ \cite[Prop. 2.6]{ACEP}, \cite{Postnikov}.

Generalizing this setup, Castillo and Liu \cite{CastilloLiu} defined the \emph{nested permutahedron} $\Perm(\alpha, \beta)$ by placing an $(n-1)$-permutahedron at each corner of the $n$-permutahedron. They studied its deformations, and showed that Kapranov's  \emph{permutoassociahedron} \cite{Kapranov} -- which has an $(n-1)$-associahedron at each vertex of the $n$-permutahedron, can be obtained as a deformation of $\Perm(\alpha, \beta)$.

The cosmohedron would seem to fit in the same framework; our proof of Theorem \ref{thm:cosmofan3} even shows that the cosmohedron has the same edge directions as $\Perm(\alpha, \beta)$, namely $e_i-e_j$ and $e_i-e_j+e_k-e_l$ for distinct $i,j,k,l$.
This makes it natural to wonder: Is our cosmohedron a deformation of the nested permutahedron? \textbf{It is not.}

Dually, we claim that our cosmohedral fan is not a coarsening of the nested braid fan  $\Br_n^2$ of Castillo and Liu \cite{CastilloLiu}.
The $n!\cdot(n-1)!$ facets of $\Br_n^2$ correspond to the choices of an order $\sigma \in S_n$ of the coordinates of $x \in \R^n$ and an order $\tau \in S_{n-1}$ of their consecutive differences. For example, the set of points $x \in \R^4$ with

 \qquad $x_3 > x_1 > x_4 > x_2$ \qquad and \qquad 
 $x_3-x_1 > x_1-x_4 > x_4-x_2 > 0$

\noindent is a maximal open cone of $\Br_4^2$. Such a point must satisfy two inequalities $x_3-x_4 > x_3-x_1$ and $x_1-x_2 > 0$ of the Matryoshkal $\cone(M_2)$ of Figure \ref{fig:twoMatryoshkas}, but it may or may not satisfy the third inequality $x_3-x_1 > x_1-x_2$. Therefore, this maximal cone of $\Br_4^2$ intersects $\cone(M_2)$ full-dimensionally, but is not contained in it.
This proves our claim. 

\textsf{Why is this a challenge?} Generalized permutahedra are in bijection with submodular functions on the subsets of $[n]$. Thus to construct them, it is sufficient to choose the correct submodular function, and to use the theory of polymatroids and generalized permutahedra to prove the correct combinatorial structure; see \cite{AguiarArdila, Postnikov}. 

We do not know an analog to the permutahedron for this setting.
It would be very interesting to find a nice polytope such that (i) it can be deformed to obtain the cosmohedron, (ii) its deformation cone can be nicely described combinatorially, and (iii) the face structure of its deformations is combinatorially well-behaved.

\bigskip
\noindent
\textsf{\textbf{We don't know the cosmohedron to be a Minkowski sum of easy polytopes.}} 
The permutahedron, Loday's associahedron, and Postnikov's graph and bracket associahedra can all be expressed as Minkowski sums of faces of the standard simplex. This gives a simple proof that their face structure-- the common refinement of the easy face structures of the summands -- is correct. For details, see \cite{Postnikov}. 

We do not know how to do this in our setting. It would be interesting to find a realization of the cosmohedron that can be decomposed easily and usefully into a Minkowski sum.

\bigskip 
\noindent
\textsf{\textbf{The faces of the cosmohedron only factor combinatorially.}}
As discussed in Lemma \ref{lemma:bracketfactorization}, faces of a generalized permutahedra factor into smaller generalized permutahedra \cite{AguiarArdila, Edmonds}. For permutahedra, associahedra, and graph associahedra, any facet of a polytope in these families is a product of smaller polytopes in the same family. This gives an inductive strategy to prove their combinatorial structure, introduced for associahedra in \cite{MarklShniderStasheff}. One simply needs to guarantee that the facets are placed correctly; again, because these polytopes are simple, this step only requires verifying the appropriate depth inequalities. 
Once the facets are laid out correctly, thanks to their factorization, the rest of the face structure can be proved recursively. 

In our realization of the cosmohedron, each one of the facets is  \textbf{combinatorially, but not geo\-metrically} a product of cosmohedra and a bracket associahedron. Therefore, its combinatorial structure cannot be proved using this inductive strategy.
It would be interesting to find a realization of the cosmohedron where the face factorization of Proposition \ref{prop:factorization} holds geometrically.

\bigskip 
\noindent
\textsf{\textbf{Outlook: chiseled polytopes.}}
In an upcoming paper, we present a general family of \emph{$\X$ in $Y$} polytopes that contains all of the polytopes above, and numerous other polytopes of interest, old and new. Our proof strategy for the combinatorial structure of the cosmohedron is very well suited to this general setting.
In the Appendix, we provide some physical motivation for this general construction, by sketching a new motivating example: we introduce the \emph{one-loop $\mathcal{U}$-polytope}, which captures the combinatorics of all one-loop cubic (trivalent) graphs, as well as the Feynman polytope for the Symanzik  $\mathcal{U}$-polynomial of each graph, all in one polytope.

\section{\textsf{Acknowledgements}}

The work of Federico Ardila--Mantilla was supported by the  National Science Foundation (Grant DMS 2154279) in the United States, a Wolfson Fellowship of The Royal Society in the United Kingdom, and 
the Friends of the Institute for Advanced Study Fund at the IAS.
The work of Nima Arkani-Hamed was supported by the DOE (Grant No. DE-SC0009988), the Simons Collaboration on Celestial Holography,  the European Union (ERC, UNIVERSE PLUS, 101118787), and the Carl B. Feinberg cross-disciplinary program in innovation at the IAS.
The work of Carolina Figueiredo was supported by FCT/Portugal (Grant No. 2023.01221.BD). 
The work of Francisco Vaz\~ao was supported by the Jonathan M. Nelson Center for Collaborative Research. 
Views and opinions expressed are those of the author(s) only and do not necessarily reflect those of the European Union or the European Research Council Executive Agency. Neither the European Union nor the granting authority can be held responsible for them. 

\appendix

\section{\textsf{Appendix: $\X$ in Y polytopes}} \label{sec:XinY}

The main focus of this paper has been to explore and prove the mathematical properties of cosmohedra, which encode the cosmological wavefunction of $\Tr(\phi^3)$ theory. We use this appendix to discuss how the principles that guided the construction of cosmohedra are of much broader relevance. 

\subsection{\textsf{The cosmohedron as a guiding example for $\X$ in $Y$ polytopes}}

As we have learned by now, cosmohedra encode geometrically the combinatorics of maximal Matryoshkas of an $n$-gon. One way to organize all such Matryoshkas is by dividing them into classes labeled by the underlying collection of triangles -- \textit{i.e.} the underlying triangulation. This is what naturally occurs in physics. To write down the cosmological wavefunction, we start by listing all possible cubic (trivalent) graphs, which correspond to all possible triangulations. Then, for each cubic graph, we consider all possible ways of finding maximal collections of nested subgraphs, which give the possible maximal Matryoshkas compatible with the respective underlying triangulation. The cosmohedron is then the object that puts both combinatorial problems together. In particular, to find the cosmohedron we first needed to find two other polytopes: 
\begin{enumerate}[itemsep=0em]
  \item[Y.] The underlying polytope whose vertices label all possible triangulations/diagrams -- which at tree-level is simply the associahedron;
  \item[$\X$.] A class of polytopes, one associated to each triangulation, whose vertices label all possible Matryoshkas compatible with the respective triangulation -- which is done by the bracket associahedon $X_T$ for each triangulation $T$.
\end{enumerate}

Once we have these two objects under control, we ``blow up" each vertex $v_T$ of $Y$ into the respective $\X$ polytope $X_T$, via the chiseling procedure described earlier. At the level of the normal fans, we subdivide each cone of the fan of $Y$ using the corresponding normal fan in the family $\X$. 

Therefore, for any given combinatorial problem, where on top of listing all possible diagrams ($Y$), we also want to keep track of some further information $X_G$ of \textbf{each} diagram $G$ ($\X$). Once we find the polytopes for $\X$ and $Y$ separately, we can seek to define the polytope that captures the full combinatorics ($\X$ in $Y$) by the procedure described above. 

As it turns out, such situations are ubiquitous in physics, and not just in the mathematical settings discussed in Section \ref{sec:subtleties}. In this appendix, we will discuss a particular example, simply to illustrate how this construction generalizes to other contexts.

\subsection{ \textsf{The $\U$-polytope for one-loop-integrated amplitudes}}

At loop-level, the amplitudes are expressed as a sum of \emph{Feynman integrals}, which often diverge at the boundaries of the integration domain. Thus, it is important to record the asymptotic behavior of the integrands. The most transparent way to do this is by using the projective representation of Feynman integrals in terms of \emph{Schwinger parameters} $\alpha_e$ -- one for each edge of the Feynman diagram $G$ -- where they take the following form:
\[
\label{eq:IG}
    \mathcal{I}_G = \Gamma(d)
    \int_{\R\mathbb{P}_{>0}^{E-1}} \frac{\prod_{e \in G} d \alpha_e }{\text{GL}(1)} \, \frac{1}{\mathcal{U}_G^{D/2-d} \mathcal{F}_G^d}. 
\]
Here $d= E - L D/2$ is the \emph{superficial degree of divergence} of a graph $G$ with $E$ edges and genus $L$ (number of independent loops) in spacetime dimension $D$, and $\mathcal{U}_G$ and $\mathcal{F}_G$ are the first and second \emph{Symanzik polynomials} of $G$:
\[
  \mathcal{U}_G = \sum_{T \text{ tree of } G} \, \prod_{e \notin T} \alpha_e, 
  \qquad
  \mathcal{F}_G = \sum_{T \text{ 2-tree of } G} \, \prod_{e \notin T} \alpha_e.
\]
The sum in $\U_G$ is over all spanning trees of  $G$, and the sum in $\F_G$ is over all spanning \emph{2-trees} $T_1 \sqcup T_2$, where $T_1$ and $T_2$ are two vertex-disjoint trees that span all vertices of $G$.
To correctly describe the divergences of $\mathcal{I}_G$, one needs to characterize the asymptotic behavior of $\mathcal{U}_G$ and $\mathcal{F}_G$.

Therefore, at one-loop level, we are looking for an object that captures not only all possible cubic (trivalent) one-loop graphs $G$ (collected in a polytope $Y$), but also the possible leading monomials in $\mathcal{U}_G$ and $\mathcal{F}_G$ for each graph $G$ (collected in a family of polytopes $\X$). As it turns out, the polytopes Y and $\X$ polytopes are known in the physics and mathematics literature \cite{NimasTalk, Arkani-Hamed:2022cqe, Backus:2025hpn, Edmonds}. We will show how to combine them together into a single chiseled polytope. This answers a question posed in \cite{Arkani-Hamed:2023lbd}.

In this Appendix, we illustrate the essential ideas behind the construction in a simple but already interesting case: the asymptotic behaviors of the $\U$-polynomials in the one-loop setting. 
In an upcoming paper, we will treat the general setting of higher loops, and consider both Symanzik polynomials $\mathcal{F}$ and $\mathcal{U}$.

\subsubsection{\textsf{The $Y$ polytope: the one-loop associahedron}}

The \emph{one-loop associahedron} is a polytope designed to parameterize the triangulations of a polygon $\Pentagon_n$ with one puncture. Consider an $n$-gon with vertices cyclically labeled $1,2, \ldots, n$ and a puncture labeled $O$. We consider curves of two kinds:
\begin{enumerate}
\itemsep0em
    \item \emph{chords} $x_{i,j}$ that go from $i$ to $j$ while keeping the puncture $O$ to the right, for $1 \leq i, j \leq n$. (Note that we have \emph{diagonals} $x_{i,j} \neq x_{j.i}$ for each $i \neq j$, and \emph{tadpoles} $x_{i,i}$ for each $i$.)

\item \emph{spokes} $y_k$ and $\tilde{y}_k$  that spiral from $k$ into the puncture clockwise and counterclockwise, respectively. (For simplicity, we draw straight lines from $k$ to $O$ tagged in blue and red, respectively.)
\end{enumerate}

We are interested in the triangulations of $\Pentagon_n$ using these curves. Notice that every triangulation uses exactly $n$ curves, and
contains at least one spoke $y_k$ or $\tilde{y}_k$.
Also, notice that a clockwise and a counterclockwise spiral always intersect, so every triangulation is monochromatic. The left panels of
Figures \ref{fig:2pt1LoopFan} and \ref{fig:3pt1LoopFan} show the $6$ triangulations (and six coarser subdivisions) for $n=2$, and the $10$ blue triangulations for $n=3$, respectively

The \emph{loop associahedron} was first introduced in \cite{NimasTalk}; it is a variant of the cluster polytopes in \cite{Arkani-Hamed:2019vag} that includes the \emph{tadpole} curves, $x_{i,i}$. For the punctured $n$-gon, we get an $n$-dimensional simplicial polytope whose vertices correspond to the triangulations of $\Pentagon_n$. Its faces correspond to the partial triangulations or subdivisions of $\Pentagon_n$, and the containment relations of the faces match the reverse inclusion relations of the sets of curves. The left panels of Figures  \ref{fig:embed2pt} and
\ref{fig:embed3pt} 
respectively illustrate the one-loop associahedra for $\Pentagon_2$ and $\Pentagon_3$.

\subsubsection{\textsf{The $\X$ polytopes:  Feynman polytopes}}

The asymptotic behavior of the Symanzik polynomials $\mathcal{U}_G$ and $\mathcal{F}_G$ is fully characterized by their Newton polytopes in $\R^{E(G)}$:
\[
U_G = \Newt(\U_G), \qquad F_G = \Newt(\F_G)
\]
called \emph{Feynman polytopes} and studied in \cite{Arkani-Hamed:2022cqe}.

Since $\U_G$ and $\F_G$ are squarefree, $U_G$ and $F_G$ are $0$-$1$ polytopes. 
The exponents of $\U_G$ (resp. $\F_G$) are the complements of the spanning trees (resp. $2$-trees) of graph $G$, which are the bases of the matroid $M(G)$ (resp. the truncation matroid $\Tr M(G)$). It follows that 
\[
U_G = P_{M(G)^\perp},\qquad F_G = P_{(\Tr M(G))^\perp}
\]
where $P_M$ denotes the matroid (basis) polytope of $M$ and $^\perp$
denotes matroid duality.

In this appendix, for clarity and simplicity, we focus on the first Symanzik polynomials in the one-loop setting. 
When $G$ only has one cycle, the polynomial $\mathcal{U}_G$ is simply the sum of all variables $\alpha_e$ associated to the edges $e$ of that cycle, and the Feynman polytope is just the standard simplex on those edges. We summarize this by writing that  $U_G = \Delta_{\Cycle(G)}$ in the one-loop setting. Note that, unlike in the setting of cosmohedra, this polytope is usually low-dimensional.

\subsubsection{\textsf{Towards a general construction: $\X$ in $Y$ fans in dimensions 2 and 3}}

To motivate the general construction, let us first carry it out in detail in dimensions 2 and 3, beginning at the level of fans.
The fan of the one-loop associahedron has a ray for each curve of $\Pentagon_n$, and a face $\cone(\C)$ for each collection of  $\mathcal{C} \subset \Curves(\Pentagon_n)$ of
compatible curves, or equivalently, each subdivision of $\Pentagon_n$. We need to subdivide the cones of this fan to keep track of the dominant terms of the corresponding $\U$-polynomials.

\begin{figure}[h]
    \centering    \includegraphics[width=.8\linewidth]{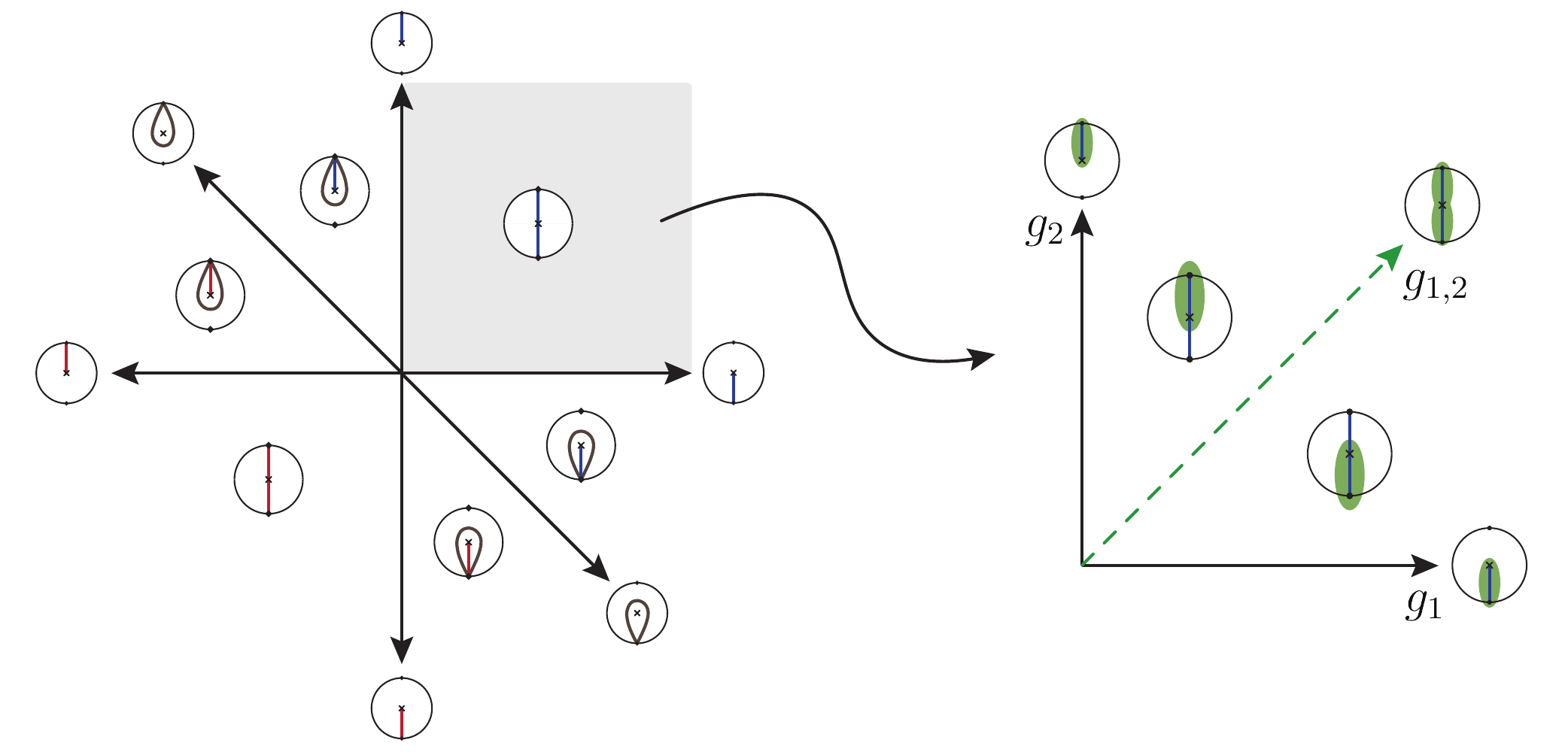}
    \caption{
    (a) Fan of the two-dimensional one-loop associahedron.
     (b) Refinement of a cone in the fan to encode the dominating terms of the $U$-polynomial.   \label{fig:2pt1LoopFan}
}
\end{figure}

 \begin{example} For $n=2$ there are three blue and three red triangulations of $\Pentagon_2$. Their cones naturally fit into a complete fan in the plane shown in the left panel of Figure \ref{fig:2pt1LoopFan}. The only diagrams with a non-trivial $\mathcal{U}$ polynomial are the ones with two spokes, corresponding to the positive and negative quadrants of the fan. 
The dual diagram of these triangulations is the bubble diagram with two parallel edges $\alpha_1$ and $\alpha_2$; its $\mathcal{U}$-polynomial is simply:
$
    \mathcal{U} 
    \left(
     \begin{gathered}
    \begin{tikzpicture}[line width=0.6,scale=0.4,baseline={([yshift=0.0ex]current bounding box.center)}]
        \coordinate (1L) at (-2.,0);
        \coordinate (1M) at (-1.,0);
        \coordinate (2R) at (2.,0);
        \coordinate (2M) at (1.,0);
        \draw[] (1L) -- (1M);
        \draw[] (2R) -- (2M);
        \draw (0,0) circle (1);
         \node[scale=0.7] at (0,0.6) {$\alpha_2$};
         \node[scale=0.7] at (0,-0.6) {$\alpha_1$};
    \end{tikzpicture}
\end{gathered}  
\right) =  \alpha_1 +\alpha_2,
$. 

We wish to divide each such cone into  two subcones, depending on which term of $\U$ dominates: $\alpha_1$ or $\alpha_2$. We can easily do this by subdividing them in half, adding the rays  $g_{1,2} = g_1 + g_2$ and $\tilde{g}_{1,2} = \tilde{g}_1 + \tilde{g}_2$ respectively. Here $g_i$ are the usual \emph{$g$-vectors} for the spokes curves, which are the rays of the loop associahedral fan. This subdivision is shown in the right panel of Figure \ref{fig:2pt1LoopFan}. 
\end{example}

\begin{example} 
For $n=3$ there are 10 blue (and  10 red) triangulations of the punctured disk with three marked points on the boundary $\Pentagon_3$. They label the maximal cones of the fan shown on the left of Figure \ref{fig:3pt1LoopFan}; this is the blue half of the one-loop associahedral fan. The red half is isomorphic, and is glued to this one along the boundary in the opposite orientation. Together they form a complete three-dimensional fan.

\begin{figure}[h]
    \centering
    \includegraphics[width=\linewidth]{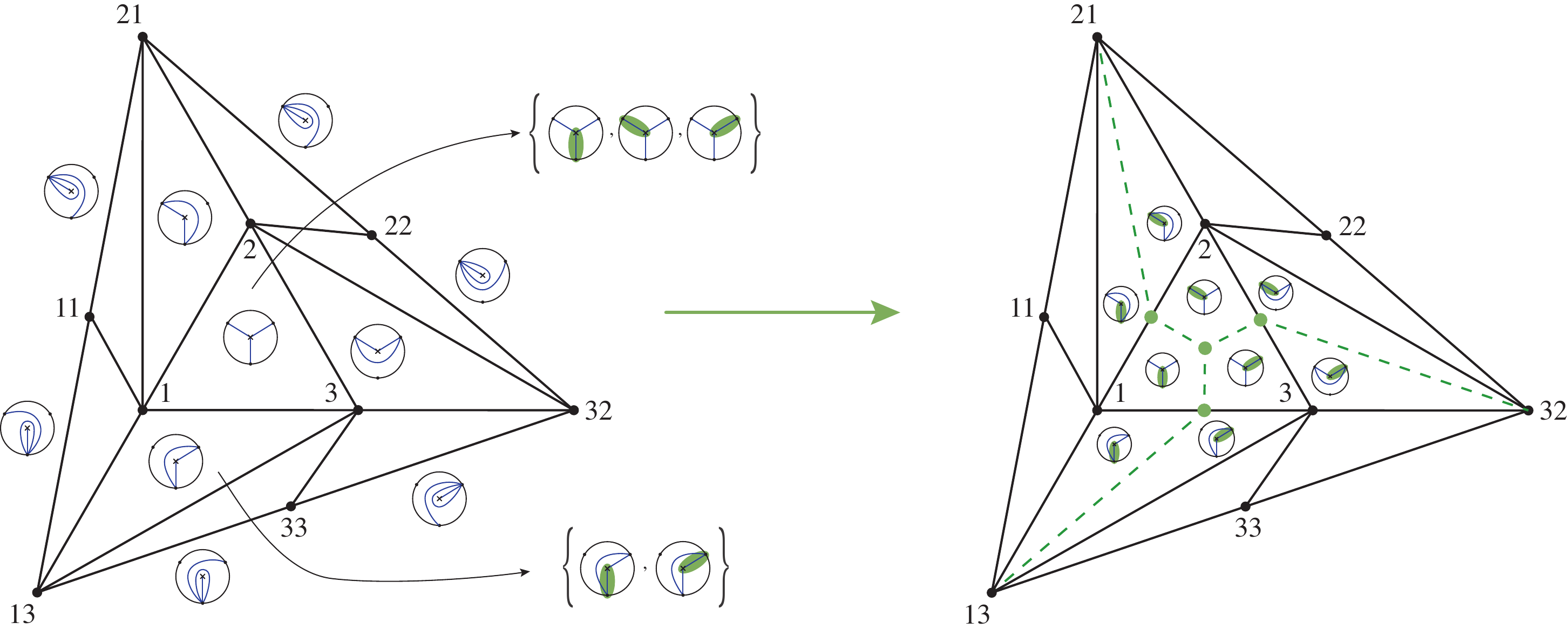}
    \caption{(a) Half-fan of the three-dimensional one-loop associahedron.  (b) Half-fan of the three-dimensional one-loop $\U$-polytope. }
    \label{fig:3pt1LoopFan}
\end{figure}

Again, for each triangulation $\C$ with two spokes $y_i$ and $y_j$, we introduce a new ray $g_{ij} = g_i + g_j$ that subdivides the corresponding cone in half. But now there is also a triangulation with three spokes $y_1, y_2, y_3$, with $\U$ polynomial $\alpha_1 + \alpha_2 + \alpha_3$. We subdivide its face $\cone(g_1,g_2,g_3)$ by introducing four new rays 
$    g_{1,2} = g_1 + g_2$, 
$g_{2,3} = g_2 + g_3,$
$ g_{1,3} = g_1 + g_3,$ and 
that span three full-dimensional subcones: 
$\cone_1 = \cone(g_1, g_{1,2}, g_{1,3}, g_{1,2,3})$,  
$\cone_2 = \cone(g_2, g_{1,2}, g_{2,3}, g_{1,2,3})$, and  
$\cone_3 = \cone(g_3, g_{1,3}, g_{2,3}, g_{1,2,3})$.  
This is illustrated on the right side of Figure \ref{fig:3pt1LoopFan}. Already in this example, we see that the fan is not simplicial, and therefore the  $\mathcal{U}$-polytope will not be simple.
\end{example}

\subsubsection{\textsf{The normal fan of the $\U$-polytope}}

Now we are ready to describe the general construction of the $\U$-fan. 

\bigskip
\noindent
\textsf{\textbf{Fan of the one-loop associahedron.}} 
The fan of the one-loop associahedron \cite{NimasTalk} has a ray for each curve of $\Pentagon_n$, and a face $\cone(\C)$ for each collection of compatible curves, $\mathcal{C} \subset \Curves(\Pentagon_n)$, or equivalently, each subdivision of $\Pentagon_n$. This is a simplicial fan whose cones are ordered by reverse inclusion:  
\[
\cone(\C) \subseteq \cone(\C') \quad \text{  if and only if } \quad  \C \subseteq \C'.
\]

 \bigskip
\noindent
\textsf{\textbf{Fan of the $\mathcal{U}$ polytope.}} 
Now consider a facet $\cone(\C)$ of the one-loop associahedron, corresponding to a triangulation $\C$ of $\Pentagon_n$. In the polytope, we wish to replace the vertex $v_{\C}$ with a simplex $\Delta_{\Spokes(\C)}$, since the spokes of $\C$ correspond to the edges in the unique loop of the cubic diagram dual to $\C$.
Therefore 
we need to subdivide the simplicial face $\cone(\C)$ into $|\Spokes(\C)|$ maximal cones. 
If $\cone(\C)$ is regular then the $s$-th cone $\cone(\C,s)$ consists of the points in $\cone(\C)$ whose closest spoke-ray is $g_s$. 
If it is not, we linearly transform it into a regular cone, and use that subdivision. Explicitly, we obtain new rays
\[
    g_\mathcal{S} = \sum_{s \in \mathcal{S}} g_s,
    \qquad    \tilde g_\mathcal{S} = \sum_{s \in {\mathcal{S}} }\tilde g_s 
\]
for each subset $\mathcal{S} \subseteq [n]$ and more generally, a new cone
\[
\cone(\C, \S) = 
\cone(
\{g_{ij} \, : \,  x_{ij} \in \Chords(\C)\} 
\cup 
\{g_{T} \, : \, S \subseteq  T \subseteq \Spokes(\C)\}
)
\]
for each non-crossing set of curves $\C$ and each non-empty subset $\emptyset \neq \S \subseteq \Spokes(\C)$ -- where we allow the exception $\S=\emptyset$ for the sets $\C$ with no spokes. 

Note that $\cone(\C, \S)$  is a $|\Chords(\C)|$-fold cone over a $(|\Spokes(\C)| - |S|)$--cube. 
We have 
\[
\cone(\C,\S) \subseteq \cone(\C',\S') \quad \text{  if and only if } \quad  \C \subseteq \C' \text{ and } \S \supseteq \S',
\]
 The blue and red fans glue along the ``equatorial complex" of (necessarily lower-dimensional) faces $\cone(\C,\emptyset)$ such that $\C$ has no spokes.

\subsubsection{\textsf{The $\U$-polytope}}

We now construct a geometric realization of the $\mathcal{U}$-polytope, given by the appropriate chiseling procedure that creates, at each vertex of the underlying $Y$-polytope, the respective $\X$-polytope.  We understood above how to subdivide the normal fan of the loop associahedron to get the normal fan of the $\mathcal{U}$-polytope -- and therefore we have its new facet normals $g_{\S}$ and $\tilde{g}_{\S}$ for $\S \subseteq [n]$ . However, as we saw for the cosmohedron, it is a subtle matter to determine how deeply we need to chisel in each direction to obtain the correct polytope. This is controlled by the constants $\epsilon_{\mathcal{S}}$ and $\tilde{\epsilon}_{\mathcal{S}}$.appearing in the new facet inequalities $g_{\S}\cdot x \geq \epsilon_{\mathcal{S}}$ and $\tilde{g}_{\S}\cdot x \geq \tilde{\epsilon}_{\mathcal{S}}$. 

Given a vector $\c$ of positive parameters $c_{rs}$, $c_r$ and $\tilde{c}_r$ for $1 \leq r, s \leq n$, the \emph{one-loop associahedron} $L_1(\c)$ defined in \cite{NimasTalk, Backus:2025hpn}
 is the $n$-dimensional polytope 
\begin{equation}
\begin{aligned}
   L_1(\c) =  \Bigg\{(x,y,\tilde{y})  &\in \R^{\Curves(\Pentagon_{n+2})} \, : \, \left[ x_{r,s} \geq 0, \, y_{r}\geq 0, \, \tilde{y}_r \geq 0 \,  \right], \\
    &
    \left[ \begin{aligned}
    &x_{r,r} + x_{r+1,r+1} -x_{r+1,r} -y_{r} -\tilde{y}_{r-1} = c_{r+1,r}, \\
    &x_{r,s+1} + x_{r-1,s} - x_{r, s} - x_{r-1, s-1} = c_{rs} \text{ for }r \neq \{s+1,s\}, \\
    &y_r + \tilde{y}_r - x_{r+1,r+1} = c_{r+1}.
    \end{aligned}\right] \Bigg\}
\end{aligned} 
\label{eq:loopassoc}
\end{equation}
where we set to $0$ any variables that are undefined because their subscripts are outside of the prescribed ranges. 
In this equation, we are writing vectors in 
$ 
\R^{\Curves(\Pentagon_{n+2})}$ 
in the form
$(x,y,\tilde{y}) \in \R^{\Chords(\Pentagon_{n+2})} \times \R^{\Spokes(\Pentagon_{n+2})} \times \R^{\widetilde{\Spokes}(\Pentagon_{n+2})}
$
where $x=(x_{r,s})_{1 \leq r, s \leq n}$, 
$y=(y_{r})_{1 \leq r  \leq n}$, and
$\tilde{y}=(\tilde{y}_{r})_{1 \leq r  \leq n}$.

To construct our $\U$-polytope, we need to introduce new facet inequalities of the form:
\begin{equation}
	 \sum_{k\in \mathcal{S}} y_k \geq \epsilon_{\mathcal{S}},  \quad   
    \sum_{k\in \mathcal{S}} \tilde{y}_k \geq \tilde{\epsilon}_{\mathcal{S}} \qquad \text{ for } \emptyset \neq \S \subseteq [n]
    \label{eq:facet_ineqs}
\end{equation}
for the possible subsets of spokes. 
A first condition on the  $\epsilon$s is that they should be small enough in relation to the $c$s that the chiseling will not remove any faces of $L_1(\c)$.
The parameters must also satisfy the equalities
\begin{equation}
    \epsilon_{\mathcal{S}_1} + \epsilon_{\mathcal{S}_2} = \epsilon_{\mathcal{S}_1\cup \mathcal{S}_2} +\epsilon_{\mathcal{S}_1\cap \mathcal{S}_2} \text{ for any subsets } \mathcal{S}_1,  \mathcal{S}_2 \text{ with } \mathcal{S}_1\cap \mathcal{S}_2 \neq \emptyset
\label{eq:modular} \end{equation}
since the corresponding rays of the normal fan satisfy  $g_{\mathcal{S}_1} + g_{\mathcal{S}_2} = g_{\mathcal{S}_1\cup \mathcal{S}_2} +g_{\mathcal{S}_1\cap \mathcal{S}_2}$, and they lie in the same cone $\cone([n], \{y_s\})$ of the fan for any $s \in \mathcal{S}_1\cap \mathcal{S}_2$.
In addition, the $\epsilon$s must  satisfy
the \emph{wall-crossing inequalities}
\begin{equation}
    \epsilon_{i,j} > \epsilon_i + \epsilon_j \qquad \text{ for } i \neq j
    \label{ineq:modular}
\end{equation}
since the corresponding rays satisfy $g_{i,j} = g_i + g_j$, and  $\cone(g_i, g_{i,j})$ and
$\cone(g_j, g_{i,j})$ are distinct cones in the fan.
The analogous conditions hold for the $\tilde{\epsilon}$'s. One may verify that these are all the necessary and sufficient conditions to obtain the desired $\U$-polytope.

The solutions to the system of equations \eqref{eq:modular} are of the form 
\begin{equation}
    \epsilon_{\{s_1, \ldots, s_k\}} = \epsilon_{s_1} + \cdots + \epsilon_{s_k} + (k-1)\epsilon 
\end{equation}
for any choice of $\epsilon_1, \ldots, \epsilon_n, \epsilon$, and the inequalities \eqref{ineq:modular} are satisfied when $\epsilon > 0$. 
The same analysis holds for the $\tilde{\epsilon}$'s.
Thus a particularly simple choice of parameters that cuts out a $\U$-polytope is
\begin{equation}
\epsilon_{S} = \tilde{\epsilon}_{S} = (|S|-1)\epsilon\, .
\end{equation}
for small $\epsilon >0$. 
Summarizing, the $\U$-polytope is
\[
\U(\c) = \{(x,y,\tilde y) \in L_1(\c) \, : \, \sum_{k\in \mathcal{S}} y_k \geq (|S|-1)\epsilon,  \quad   
    \sum_{k\in \mathcal{S}} \tilde{y}_k 
    \geq (|S|-1)\epsilon
    \text{ for } \emptyset \neq \S \subseteq [n]\}
\]

\begin{figure}[h]
\begin{center}
\includegraphics[width=3.5in]{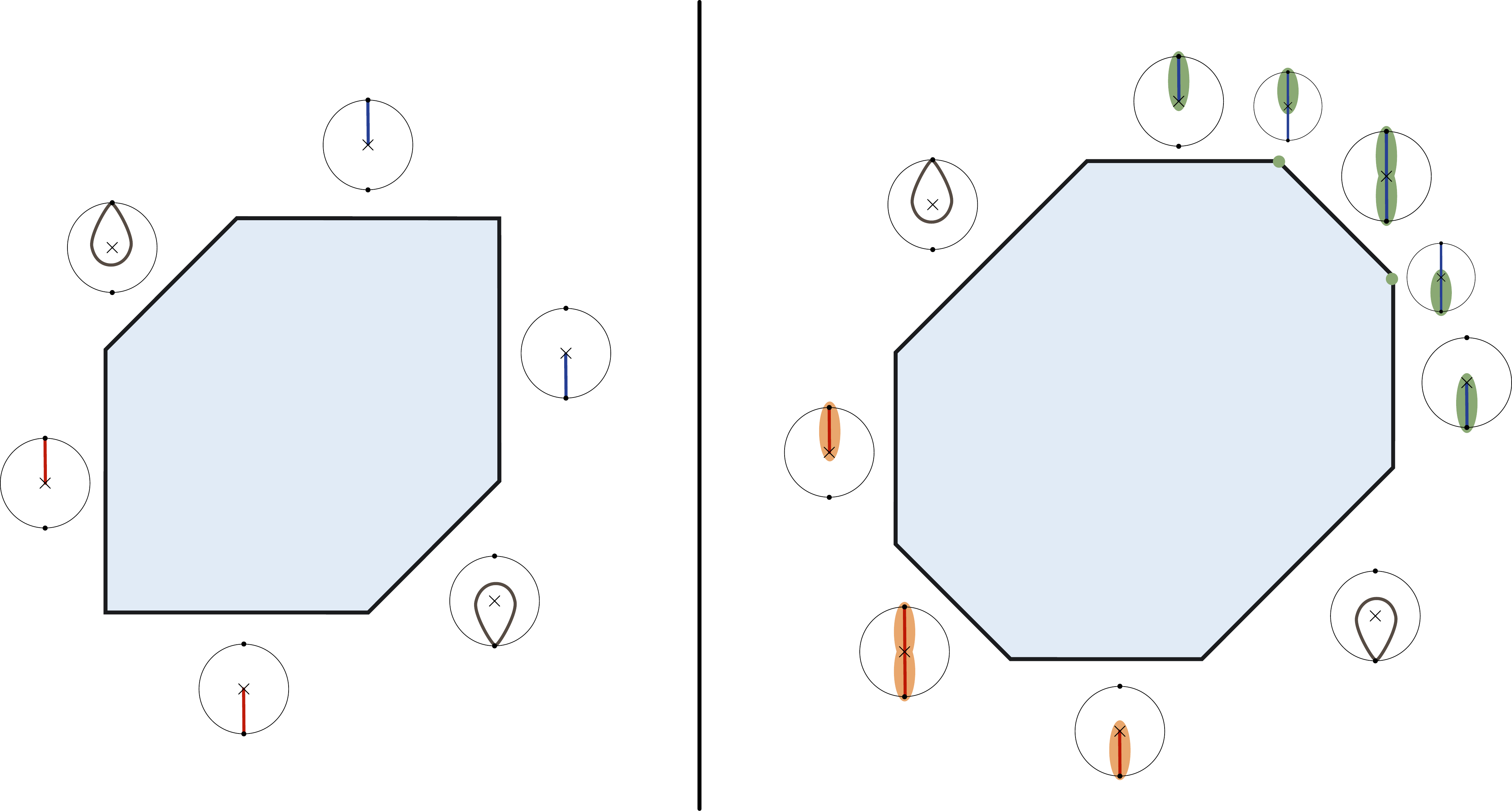}
\end{center}
\caption{(a) $Y$: Two-dimensional one-loop associahedron.  (b) $\X$ in $Y$: Two-dimensional one-loop $\mathcal{U}$-polytope. }
\label{fig:embed2pt}
\end{figure}

\begin{example}
For $n=2$, the one-loop associahedron is the intersection of the subspace
\begin{align}
    & y_1 +\tilde{y}_1 -x_{2,2} = c_2\,, \quad y_2 +\tilde{y}_2 -x_{1,1} = c_1\, , \nonumber \\
    & x_{1,1} + x_{2,2} -y_2 -\tilde{y}_1 = c_{1,2}\, ,\quad x_{1,1} + x_{2,2} -y_1 -\tilde{y}_2 = c_{2,1}\, ,
\end{align}
with the positive orthant $x_{1,1} \geq 0,\,  x_{2,2} \geq 0,\,  y_{1} \geq 0,\, y_{2} \geq 0$. To obtain the $\U$-polytope, we impose the stronger inequalities
\begin{align}
    x_{1,1} \geq 0\, , \,\, x_{2,2} \geq 0 \, , \,\, y_{1} \geq\epsilon_1\, , \,\, y_{2} \geq\epsilon_2 \, , \,\, y_{1} + y_{2}  \geq \epsilon_1 + \epsilon_2 + \epsilon \, .
\end{align}
for any $\epsilon_1, \epsilon_2, \epsilon >0$ and the analogous inequalities for $\tilde{y}$ for any $\tilde\epsilon_1, \tilde\epsilon_2, \tilde\epsilon$ with $\epsilon, \tilde \epsilon >0$.
Using the parameterization $\epsilon_1 = \epsilon_2 =0$ (and similarly for $\tilde{\epsilon}$) and $\epsilon=\tilde\epsilon$ we obtain the correct embedding as shown on the right of Figure \ref{fig:embed2pt}.

\begin{figure}[H]
\begin{center}
\includegraphics[width=5in]{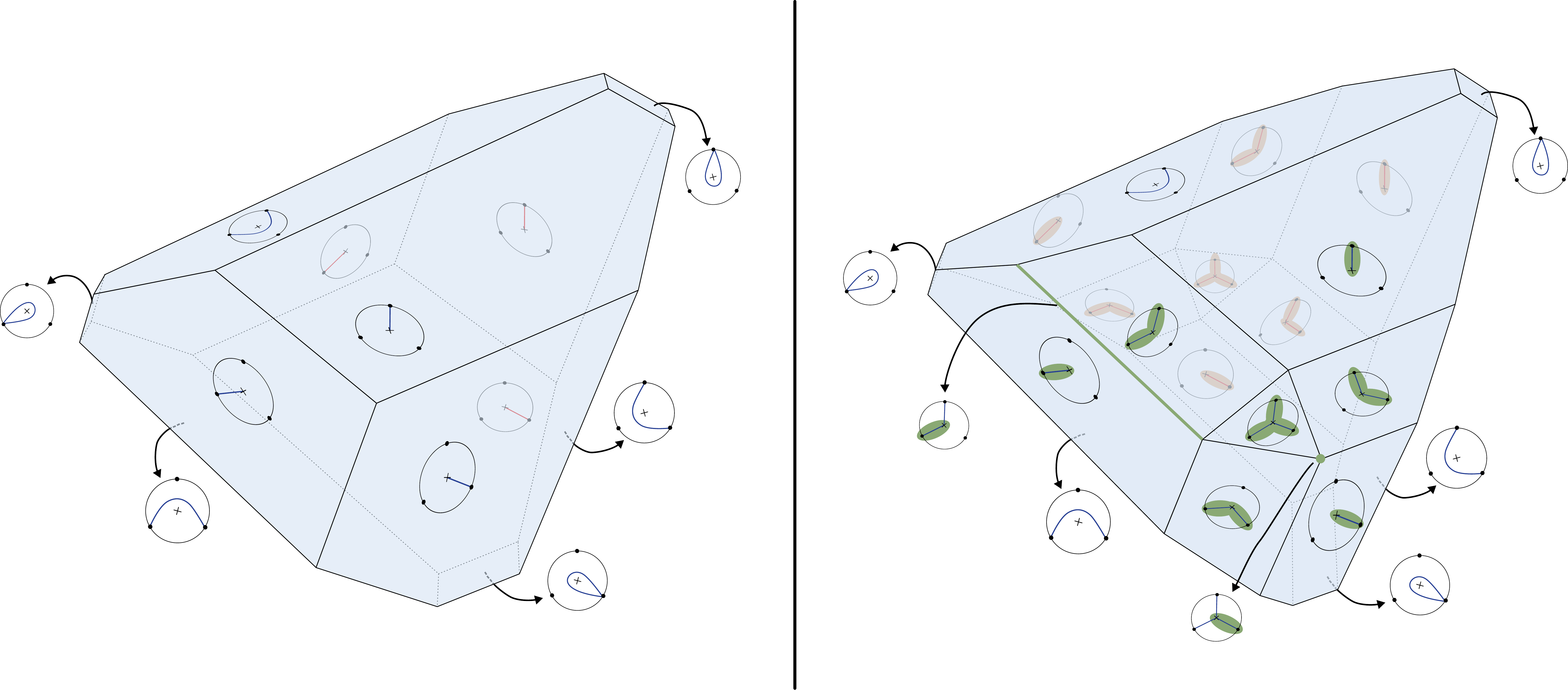}
\end{center}
\caption{(a) Y: Three-dimensional one-loop associahedron. 
(b) $\X$ in $Y$: Three dimensional one-loop $\mathcal{U}$-polytope.
\label{fig:embed3pt}}
\end{figure}

Similarly, for $n=3$, we intersect the subspace
\begin{align}
    & y_1 +\tilde{y}_1 -x_{2,2} = c_2\,, \quad y_2 +\tilde{y}_2 -x_{3,3} = c_3\, ,\quad y_3 +\tilde{y}_3 -x_{1,1} = c_1\, , \nonumber \\
    & x_{1,1} + x_{2,2}+ x_{2,1} -y_1 -\tilde{y}_3 = c_{2,1}\, ,\,\, \,  x_{2,2} + x_{3,3}+ x_{3,2} -y_2 -\tilde{y}_1 = c_{3,2} \nonumber \\
    & x_{3,3} + x_{1,1}+ x_{1,3} -y_3 -\tilde{y}_2 = c_{1,3}
    \, , \quad x_{1,3}+ x_{2,1} - x_{1,1} = c_{2,3}\, , \nonumber \\
    & x_{2,1}+ x_{3,2} - x_{2,2} = c_{3,1}\, , \quad x_{3,2}+ x_{1,3} - x_{3,3} = c_{1,2}\, ,
\end{align}
with the inequalities 
\begin{align}
    & x_{i,i}\geq 0\, ,\quad x_{2,1}\geq0\, ,\quad x_{3,2}\geq0\, ,\quad x_{1,3}\geq0, \nonumber \\
    & y_i \geq \epsilon_i\,, \quad y_i + y_j \geq \epsilon_i + \epsilon_j + \epsilon\,,\quad y_1 + y_2 + y_3 \geq \epsilon_1 + \epsilon_2 + \epsilon_3 + 2\epsilon
\end{align}
for $\epsilon_1,\epsilon_2,\epsilon_3,\epsilon>0$,  
and analogous inequalities for the $\tilde{y}$s.

With $\epsilon_i=\tilde\epsilon_i=0$ and $\epsilon=\tilde\epsilon=1$, we produce the correct embedding seen in Figure \ref{fig:embed3pt}. 
\end{example}

\bibliographystyle{alpha}
\bibliography{cosmohedra}

\end{document}